\newcommand{\bbN}{\mathbb{N}}
\newcommand{\bbZ}{\mathbb{Z}}
\newcommand{\frakS}{\mathfrak{S}}
\newcommand{\frakG}{\mathfrak{G}}
\newcommand{\bfa}{\mathbf{a}}
\newcommand{\bfP}{\mathbf{P}}
\newcommand{\bfQ}{\mathbf{Q}}
\newcommand{\bfW}{\mathbf{W}}
\newcommand{\calH}{\mathcal{H}}
\newcommand{\calR}{\mathcal{R}}
\renewcommand{\phi}{\varphi}
\newcommand{\ol}[1]{\overline{#1}}
\newcommand{\sseq}{\subseteq}
\newcommand{\sm}{\smallsetminus}
\newcommand{\msert}{\overset{m}{\rightarrow}}
\DeclareMathOperator{\Fl}{\mathcal{F}\ell}
\DeclareMathOperator{\code}{code}
\DeclareMathOperator{\Grass}{gr}
\DeclareMathOperator{\Des}{Des}
\DeclareMathOperator{\wt}{wt}
\DeclareMathOperator{\PD}{\sf{PD}}
\DeclareMathOperator{\SPD}{\sf{SPD}}
\DeclareMathOperator{\RSSYT}{\sf{RSSYT}}
\DeclareMathOperator{\BM}{\sf{BM}}
\DeclareMathOperator{\RSK}{dRSK}
\DeclareMathOperator{\tab}{tab}
\DeclareMathOperator{\mat}{mat}
\DeclareMathOperator{\ins}{ins}
\DeclareMathOperator{\rec}{rec}
\DeclareMathOperator{\Rect}{Rect} % Rectification
\DeclareMathOperator{\coRect}{coRect} % coRectification
\newenvironment{shift}[1][(0,0)]{
    \begin{scope}[shift={#1}]
}{
    \end{scope}
}
\newtheorem{theorem}{Theorem}[section]
\newtheorem{proposition}[theorem]{Proposition}
\newtheorem{conjecture}[theorem]{Conjecture}
\newtheorem{lemma}[theorem]{Lemma}
\newtheorem{corollary}[theorem]{Corollary}
\theoremstyle{definition}
\newtheorem{definition}[theorem]{Definition}
\definecolor{pipe color}{RGB}{160,160,160}
\definecolor{x color 1}{RGB}{20,20,20}
\definecolor{x color 2}{RGB}{160,160,160}
\definecolor{y color 1}{RGB}{255,0,0}
\definecolor{y color 2}{RGB}{255,200,200}
\newcommand{\drawHorizontal}[3][color=pipe color]{
    \draw[#1,very thick]
    (#2-0.5,#3) to (#2+0.5,#3);
}
\newcommand{\drawVertical}[3][color=pipe color]{
    \draw[#1,very thick]
    (#2,#3-0.5) to (#2,#3+0.5);
}
\newcommand{\drawRelbow}[3][color=pipe color]{
    \draw[#1,very thick]
    (#2,#3-0.5) to (#2,#3-0.25) to[bend left] (#2+0.25,#3) to (#2+0.5,#3);
}
\newcommand{\drawJelbow}[3][color=pipe color]{
    \draw[#1,very thick]
    (#2-0.5,#3) to (#2-0.25,#3) to[bend right] (#2,#3+0.25) to (#2,#3+0.5);
}
\newcommand{\drawBump}[3][color=pipe color]{
    \drawRelbow[#1]{#2}{#3}
    \drawJelbow[#1]{#2}{#3}
}
\newcommand{\drawSquare}[3][black,opacity=0.5]{
    \draw[#1,thin]
    (#2-0.5,#3+0.5) rectangle (#2+0.5,#3-0.5);
}
\newcommand{\drawGraySquare}[3][gray,opacity=0.3]{
    \filldraw[#1,thin]
    (#2-0.5,#3+0.5) rectangle (#2+0.5,#3-0.5);
}
\newcommand{\drawXCross}[2]{
    % \filldraw[color=x color 2]
    % (#1-0.5,#2) -- (#1,#2+0.3) -- (#1+0.5,#2) -- (#1,#2-0.3);
    \fill[color=x color 2]
    (#1-1/2,#2) -- (#1-1/4,#2+1/4) -- (#1+1/4,#2+1/4) -- (#1+1/2,#2) -- (#1+1/4,#2-1/4) -- (#1-1/4,#2-1/4);
    \drawHorizontal[x color 1]{#1}{#2}
    \drawVertical[x color 1]{#1}{#2}
}
\newcommand{\drawYCross}[2]{
    % \filldraw[color=y color 2]
    % (#1-0.3,#2) -- (#1,#2+0.5) -- (#1+0.3,#2) -- (#1,#2-0.5);
    \fill[color=y color 2]
    (#1,#2-1/2) -- (#1+1/4,#2-1/4) -- (#1+1/4,#2+1/4) -- (#1,#2+1/2) -- (#1-1/4,#2+1/4) -- (#1-1/4,#2-1/4);
    \drawHorizontal[y color 1]{#1}{#2}
    \drawVertical[y color 1]{#1}{#2}
}
\newcommand{\drawXYCross}[2]{
    \fill[color=y color 2] (#1,#2) -- (#1+1/4,#2+1/4) -- (#1,#2+1/2) -- (#1-1/4,#2+1/4);
    \fill[color=y color 2] (#1,#2) -- (#1+1/4,#2-1/4) -- (#1,#2-1/2) -- (#1-1/4,#2-1/4);
    \fill[color=x color 2] (#1,#2) -- (#1+1/4,#2+1/4) -- (#1+1/2,#2) -- (#1+1/4,#2-1/4);
    \fill[color=x color 2] (#1,#2) -- (#1-1/4,#2+1/4) -- (#1-1/2,#2) -- (#1-1/4,#2-1/4);
    \drawVertical[y color 1]{#1}{#2}
    \drawHorizontal[x color 1]{#1}{#2}
}
\newcommand{\checkerradius}{0.38}
\newcommand{\drawXChecker}[2]{
    \fill[color=x color 2] (#1,#2) circle (\checkerradius);
    \fill[
        color=x color 1,
        pattern color=x color 1,
        pattern={Lines[distance=2pt]},
        thick
    ] (#1,#2) circle (\checkerradius);
    \draw[color=x color 1,thick] (#1,#2) circle (\checkerradius);
}
\newcommand{\drawYChecker}[2]{
    \fill[color=y color 2] (#1,#2) circle (\checkerradius);
    \fill[
        color=y color 1,
        pattern color=y color 1,
        pattern={Lines[distance=1.5pt,angle=90]},
    ] (#1,#2) circle (\checkerradius);
    \draw[color=y color 1, thick] (#1,#2) circle (\checkerradius);
}
\newcommand{\drawXYChecker}[2]{
    \drawXChecker{#1}{#2}
    \drawYChecker{#1-0.1}{#2+0.1}
}
\newcommand{\drawPDTile}[3]{
%   #1 = x position (cartesian)
%   #2 = y position (cartesian)
%   #3 = tile type:
%       0 --> bump
%       1 --> x-cross
%       2 --> y-cross
%       3 --> xy-cross?
%       4 UNASSIGNED
%       5 UNASSIGNED
%       6 UNASSIGNED
%       7 UNASSIGNED
%       8 --> r-elbow
%       9 --> blank

    \ifnum #3=0 \drawBump{#1}{#2} \fi
    \ifnum #3=1 \drawXCross{#1}{#2} \fi
    \ifnum #3=2 \drawYCross{#1}{#2} \fi
    \ifnum #3=3 \drawXYCross{#1}{#2}\fi
    \ifnum #3=4 \fi
    \ifnum #3=5 \fi
    \ifnum #3=6 \fi
    \ifnum #3=7 \fi
    \ifnum #3=8 \drawRelbow{#1}{#2} \fi
    \ifnum #3=9 \fi % blank tile (reserved)
}
\newcommand{\drawPD}[3]{
% This command takes the inputs
%   #1 = r (number of rows)
%   #2 = c (number of columns)
%   #3 =
%           1   2   3  ...  c
%       1   *,  *,  *,  *,  *,
%       2   *,  *,  *,  *,  *,
%       ... *,  *,  *,  *,  *,
%       r   *,  *,  *,  *,  *
%   where each * is replaced with the type of tile
    \PD@i = 1
    \PD@j = 1
    \foreach \t in {#3}{
        \drawPDTile{\number\PD@j}{-\number\PD@i}{\t}
        \drawSquare{\number\PD@j}{-\number\PD@i}
        
        \ifnum\PD@j < \numexpr#2\relax
            \global\advance\PD@j by 1
        \else
            \global\PD@j = 1
            \global\advance\PD@i by 1
        \fi
    }
}
\newcommand{\drawRCGraphTile}[3]{
%   #1 = x position (cartesian)
%   #2 = y position (cartesian)
%   #3 = tile type:
%       0 --> blank
%       1 --> x-puck
%       2 --> y-puck
%       3 --> xy-puck
%       4 --> gray tile w/ x-puck
%       5 --> gray tile w/ y-puck
%       6 UNASSIGNED
%       7 UNASSIGNED
%       8 UNASSIGNED
%       9 --> gray tile

    \ifnum #3=0 \fi % blank tile (reserved)
    \ifnum #3=1 \drawXChecker{#1}{#2} \fi
    \ifnum #3=2 \drawYChecker{#1}{#2} \fi
    \ifnum #3=3 \drawXYChecker{#1}{#2} \fi
    \ifnum #3=4 \drawGraySquare{#1}{#2}\drawXChecker{#1}{#2} \fi
    \ifnum #3=5 \drawGraySquare{#1}{#2}\drawYChecker{#1}{#2} \fi
    \ifnum #3=6 \fi
    \ifnum #3=7 \fi
    \ifnum #3=8 \fi
    \ifnum #3=9 \drawGraySquare{#1}{#2} \fi
}
\newcommand{\drawRCGraph}[3]{
% This command takes the inputs
%   #1 = r (number of rows)
%   #2 = c (number of columns)
%   #3 =
%           1   2   3  ...  c
%       1   *,  *,  *,  *,  *,
%       2   *,  *,  *,  *,  *,
%       ... *,  *,  *,  *,  *,
%       r   *,  *,  *,  *,  *
%   where each * is replaced with the type of tile
    \PD@i = 1
    \PD@j = 1
    \foreach \t in {#3}{
        \drawRCGraphTile{\number\PD@j}{-\number\PD@i}{\t}
        \drawSquare{\number\PD@j}{-\number\PD@i}
        
        \ifnum\PD@j < \numexpr#2\relax
            \global\advance\PD@j by 1
        \else
            \global\PD@j = 1
            \global\advance\PD@i by 1
        \fi
    }
}
\newcommand{\drawNorthLabels}[3]{
% (#1,#2) = location of left label
% #3 = a,b,c,... (list of labels)
    \PD@j = 0
    \foreach \j in {#3}{
        \node at (#2+\PD@j,-#1) {\j};
        \global\advance\PD@j by 1
    }
}
\newcommand{\drawWestLabels}[3]{
% (#1,#2) = location of top label
% #3 = a,b,c,... (list of labels)
    \PD@i = 0
    \foreach \i in {#3}{
        \node at (#2,-#1-\PD@i) {\i};
        \global\advance\PD@i by 1
    }
}
\newcommand{\bigtilescale}{.5}
\newcommand{\xcross}[1][scale=\tilescale,baseline=\tilebaseline]{\begin{tikzpicture}[#1]\drawSquare11\drawXCross11\end{tikzpicture}}
\newcommand{\ycross}[1][scale=\tilescale,baseline=\tilebaseline]{\begin{tikzpicture}[#1]\drawSquare11\drawYCross11\end{tikzpicture}}
\newcommand{\xycross}[1][scale=\tilescale,baseline=\tilebaseline]{\begin{tikzpicture}[#1]\drawSquare11\drawXYCross11\end{tikzpicture}}
\newcommand{\relbow}[1][scale=\tilescale,baseline=\tilebaseline]{\begin{tikzpicture}[#1]\drawSquare11\drawRelbow11\end{tikzpicture}}
\newcommand{\bump}[1][scale=\tilescale,baseline=\tilebaseline]{\begin{tikzpicture}[#1]\drawSquare11\drawBump11\end{tikzpicture}}
\newcommand{\bigxcross}{\xcross[scale=\bigtilescale]}
\newcommand{\bigycross}{\ycross[scale=\bigtilescale]}
\newcommand{\bigxycross}{\xycross[scale=\bigtilescale]}
\newcommand{\bigrelbow}{\relbow[scale=\bigtilescale]}
\newcommand{\bigbump}{\bump[scale=\bigtilescale]}
\title{Cauchy identities for Grothendieck polynomials and a dual RSK correspondence through pipe dreams}
\author{Hugh Dennin}
\thanks{The author was partially supported by NSF awards DMS-2231565 and DMS-1945212.}
\date{\today}
\begin{document}

\begin{abstract}
    The Cauchy identity gives a recipe for decomposing a double Grothendieck polynomial $\frakG^{(\beta)}_w(x;y)$ as a sum of products $\frakG^{(\beta)}_v(x)\frakG^{(\beta)}_u(y)$ of single Grothendieck polynomials.
    Combinatorially, this identity suggests the existence of a weight-preserving bijection between certain families of diagrams called pipe dreams.
    In this paper, we provide such a bijection using an algorithm called pipe dream rectification.
    In turn, rectification is built from a new class of flow operators which themselves exhibit a surprising symmetry.
    Finally, we examine other applications of rectification including an insertion algorithm on pipe dreams which recovers a variant of the dual RSK correspondence.
\end{abstract}

\maketitle

\setcounter{tocdepth}{1} % only show sections (and not subsections)
\tableofcontents

\section{Introduction}
\label{sect:intro}

Write $S_\infty$ for the set of permutations of $\bbN$ with finite support.
Each permutation $w\in S_\infty$ has an associated \textit{Schubert polynomial} $\frakS_w(x)\in \bbZ[x_i]_{i\in\bbN}$, first defined by Lascoux and Sch\"utzenberger in \cite{LascouxSchutzenberger82a}.
When $w\in S_n$, the Schubert polynomial $\frakS_w(x)$ represents the class of a corresponding Schubert variety $X_w$ inside the cohomology ring of the complete flag variety $\Fl(n)$.

\emph{Double Schubert polynomials} $\frakS_w(x;y) \in \bbZ[x_i;y_j]_{i,j\in\bbN}$ were introduced in \cite{Lascoux82} as a two-alphabet generalization of Schubert polynomials.
Double Schubert polynomials are related to (single) Schubert polynomials by the Cauchy identity.

\begin{theorem}[Cauchy identity for $\frakS_w$]
\label{thm:cauchy identity}
Let $w\in S_\infty$.
Then \[
    \frakS_w(x;y) =
    \sum_{\substack{u,v\in S_\infty \\ w \doteq u^{-1}v}}
    \frakS_v(x)\frakS_u(y),
\]
where the notation $w \doteq u^{-1}v$ indicates that both $w = u^{-1}v$ and $\ell(w) = \ell(u) + \ell(v)$.
\end{theorem}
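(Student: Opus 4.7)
The plan is to prove this via a weight-preserving bijection on pipe dreams, following the strategy that the paper develops in full. Recall that $\frakS_w(x;y)$ admits a combinatorial expansion as a sum over (double) pipe dreams for $w$, where each diagram contributes a monomial built from binomial factors indexed by its crossings, while $\frakS_v(x)$ and $\frakS_u(y)$ sum over monochromatic pipe dreams for $v$ and $u$ with monomial weights drawn respectively from the $x$- and $y$-alphabets. Expanding each binomial factor, the left-hand side becomes a signed sum over bicolored pipe dreams whose crossings are each labeled as either an $x$-crossing or a $y$-crossing. The task is then to match this bicolored model to $\bigsqcup_{w\doteq u^{-1}v}\PD(v)\times\PD(u)$ in a weight-preserving way.

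The strategy is to construct a rectification algorithm that reorganizes a bicolored pipe dream into a pair of monochromatic ones by sweeping all $y$-crossings toward the northwest and all $x$-crossings toward the southeast via local moves, namely commutations and ladder-style relocations whenever opposite-color crossings sit adjacently on interacting pipes. The local moves are designed to preserve the multiset of row and column labels of each color, hence to preserve the overall monomial weight. After rectification, the $y$-crossings form a pipe dream $P_u \in \PD(u)$ and the $x$-crossings form a pipe dream $P_v \in \PD(v)$ for some permutations $u, v$; tracking the pipes globally, one verifies that $u^{-1}v = w$. An inverse unrectification then interleaves a given pair $(P_u,P_v)$ back into a single bicolored pipe dream for $w$, and a routine check shows the two procedures are mutual inverses.

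The main obstacle will be showing that rectification is well-defined, terminates, and meets the length-additivity constraint $\ell(w) = \ell(u) + \ell(v)$. Well-definedness requires that the local moves be applicable in some canonical order and yield valid monochromatic pipe dreams at the end; length-additivity reduces to showing that no crossing is absorbed, duplicated, or converted during separation, which in turn amounts to a careful analysis of how pipes weave around each other under the local moves. Once these invariants are established, summing the preserved weights over both sides of the bijection directly yields the identity. As a sanity check, one can independently verify the result inductively using divided differences: $\partial_i^x$ applied to both sides reduces the identity for $w$ to that for $w s_i$ whenever $\ell(ws_i) < \ell(w)$, reducing to the base case $w = \id$ where both sides equal $1$.
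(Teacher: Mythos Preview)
Your high-level strategy is the paper's: model $\frakS_w(x;y)$ by bicolored (``super'') reduced pipe dreams, then separate the two colors by a rectification procedure built from local ladder-type moves, landing in pairs $(V,U)\in\PD_0(v)\times\PD_0(u)$ with $w\doteq u^{-1}v$. Length-additivity is indeed the easy part (a pure cardinality count, as in Proposition~\ref{prop:rectification preserves reducedness}).

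The genuine gap in your outline is the step you gloss as ``yield valid monochromatic pipe dreams at the end.'' The Schubert polynomials are sums over \emph{ordinary} pipe dreams, i.e., those supported in $\bbN\times\bbN$; your rectification a priori only produces pipe dreams in the half-space $\calH$. Showing that the separated pieces stay in the positive quadrant is not automatic and is exactly the content of Lemma~\ref{lem:Y+ preserves ordinary} and Proposition~\ref{prop:rectification preserves ordinary}. The paper emphasizes that the mirror-image procedure (corectification, pushing the $y$-crosses the other way) \emph{fails} this property (Figure~\ref{fig:corectification}), so the direction of sweeping matters and the claim requires a real argument tracking how the top row of each ladder is bounded. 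Your sketch does not isolate this issue, and ``northwest/southeast'' is in fact the wrong orientation relative to the one that works.

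Two minor points: in the paper's conventions the double Schubert polynomial is already \emph{defined} as a positive sum over super pipe dreams, so there is no binomial expansion and no signs; and the fact that $\partial(\bfW)=w$ decomposes as $u^{-1}*v$ comes not from ``tracking pipes globally'' but from the observation that each flow step factors as a sequence of ladder moves preserving the Demazure product, together with the word of the separated configuration being a concatenation.
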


In the $K$-theory of $\Fl(n)$, the class of the structure sheaf of a Schubert variety is instead represented by a \textit{Grothendieck polynomial} $\frakG_w(x)\in \bbZ[x_i]_{i\in\bbN}$ \cite{LascouxSchutzenberger82b,LascouxSchutzenberger83}.
The Grothendieck polynomials we will consider $\frakG_w^{(\beta)}(x)$ are a variant introduced in \cite{FominKirillov94} with an additional homogenizing $\beta$-parameter.
We can recover $\frakS_w(x)$ and $\frakG_w(x)$ from $\frakG_w^{(\beta)}(x)$ by specializing $\beta = 0$ and $\beta = -1$, respectively.

As with Schubert polynomials, we have a notion of \textit{double Grothendieck polynomials} $\frakG_w^{(\beta)}(x;y) \in \bbZ[\beta][x_i;y_j]_{i,j\in\bbN}$.
Grothendieck polynomials also satisfy a Cauchy identity.

\begin{theorem}[Cauchy identity for $\frakG_w^{(\beta)}$ \cite{FominKirillov94}]
\label{thm:K-cauchy identity}
Let $w\in S_\infty$.
Then \[
    \frakG_w^{(\beta)}(x;y) =
    \sum_{\substack{u,v\in S_\infty \\ w = u^{-1}*v}} 
    \beta^{\ell(u)+\ell(v)-\ell(w)}\frakG_v^{(\beta)}(x)\frakG_u^{(\beta)}(y),
\]
where $u^{-1}*v$ denotes the product of $u^{-1}$ and $v$ in the Demazure algebra.
\end{theorem}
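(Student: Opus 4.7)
The plan is to give a bijective proof using the pipe dream expansion of double Grothendieck polynomials. Let $\PD(w)$ denote the set of $K$-theoretic pipe dreams of $w$, and recall the formula
\[
    \frakG_w^{(\beta)}(x;y) = \sum_{D\in\PD(w)} \beta^{|D|-\ell(w)} \prod_{+\in D} \bigl(x_i + y_j + \beta x_i y_j\bigr),
\]
where the product runs over the crossings $+$ of $D$ situated at positions $(i,j)$. The goal is to match this expansion term by term with the right-hand side of the Cauchy identity.

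The first step is to expand each trinomial $x_i + y_j + \beta x_i y_j$ into its three constituent terms, producing a refined sum indexed by pairs $(D,\gamma)$, where $\gamma$ colors each crossing of $D$ by a label in $\{x, y, xy\}$ and each $xy$-colored crossing contributes an extra factor of $\beta$. The task then becomes constructing a weight- and degree-preserving bijection
\[
    \{(D,\gamma) : D\in\PD(w)\} \;\longleftrightarrow\; \bigsqcup_{u^{-1}*v = w} \PD(v)\times\PD(u),
\]
under which the $x$- and $xy$-colored crossings assemble into a pipe dream of $v$ in the $x$-alphabet and the $y$- and $xy$-colored crossings assemble into a pipe dream of $u$ in the $y$-alphabet.

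To construct this bijection, I would define a rectification procedure composed of local commutation moves that migrate the $y$-colored tiles into one region of the board (say below the antidiagonal) and the $x$-colored tiles into the complement, until the two colors occupy disjoint sub-boards that can be read off as honest pipe dreams. Each local move must preserve the Demazure product of the row-reading word, so that the total product splits as $u^{-1}*v$, and must preserve the total $\beta$-degree. Crucially, the presence of bumps enables length-increasing moves, which is exactly why the Demazure product (rather than the ordinary product) arises on the right-hand side.

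The main obstacle will be handling the interaction between $K$-theoretic bumps and the coloring during rectification. In the Schubert (cohomology) analogue, the local moves reduce to simple braid-like commutations on reduced words, but in the $K$-theoretic setting bumps can appear or disappear as tiles are migrated, and the coloring must be updated consistently across such moves. Verifying that the procedure terminates at a canonical output, that the resulting sub-diagrams are genuine pipe dreams of $v$ and $u$, and that the Demazure identity $u^{-1}*v = w$ holds with the correct $\beta$-weighting, will be the technical heart of the proof.
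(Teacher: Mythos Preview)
Your outline is essentially the paper's own strategy: the colored expansion of the trinomial $x_i+y_j+\beta x_iy_j$ is exactly the paper's super pipe dream model (the labels $x$, $y$, $xy$ correspond to $P_x\smallsetminus P_y$, $P_y\smallsetminus P_x$, $P_x\cap P_y$, and the extra $\beta$ on an $xy$-tile is precisely the discrepancy $|P_x|+|P_y|-|P_x\cup P_y|$), and the separation procedure you describe is the paper's rectification map $\Rect$ built from the flow operators $Y^+_j$.

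However, what you have written is a plan, not a proof, and one essential difficulty is not mentioned at all. The local moves and the fact that they preserve the Demazure permutation are handled in the paper by explicit ladder-shaped modifications which factor as sequences of ladder moves (Section~4). The more serious point is this: the Grothendieck polynomials on both sides are sums over \emph{ordinary} pipe dreams, those supported in $\bbN\times\bbN$. Your bijection must therefore send an ordinary colored pipe dream to a pair of ordinary pipe dreams. This is the content of Proposition~\ref{prop:rectification preserves ordinary} (via Lemma~\ref{lem:Y+ preserves ordinary}) and it is genuinely delicate: the analogous statement fails for the reverse flow direction (corectification), as Figure~\ref{fig:corectification} shows. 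Nothing in your sketch addresses why the migrated tiles stay in the positive quadrant, and without this the bijection does not restrict to the index sets that define the two sides of the identity.

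A minor point: ``below the antidiagonal'' is not the right target region. The paper pushes red checkers to the northeast via iterated $Y^+$ until they lie strictly above and to the right of all black checkers; only then does the reading word factor as $\bfa(W'_y)\bfa(W'_x)$ and yield the decomposition $w=u^{-1}*v$.
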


Schubert and Grothendieck polynomials can be viewed as generating functions for certain collections of wiring diagrams called \emph{pipe dreams}; see \cite{BergeronBilley93,FominKirillov96,FominKirillov94,KnutsonMiller05}.
Their doubled variants have a similar interpretation in terms of \textit{super pipe dreams}---pipe dreams with a coloring of their crossings using black and red.

Equipped with this viewpoint, the Cauchy identities (Theorems \ref{thm:cauchy identity} and \ref{thm:K-cauchy identity}) become combinatorial identities of pipe dreams, so a natural question is to ask for a weight preserving bijection that realizes them.
In practice, we would like an algorithm that takes a super pipe dream for $w$ as input and produces two pipe dreams corresponding to a decomposition of $w$ into a pair $u$ and $v$.

The main goal of this paper is to provide such an algorithm, which we call pipe dream \textit{rectification}.
Rectification will be constructed from a new class of operators on super pipe dreams that \textit{flow} crosses between columns and rows.
We will then examine some further applications of rectification, including an insertion algorithm on pipe dreams which recovers a variant of the \textit{dual Robinson--Schensted--Knuth} correspondence.

This paper is organized as follows:
in Sections \ref{sect:combinatorics} and \ref{sect:pipe dreams}, we recall the necessary background on permutations and pipe dreams and give our definition of a super pipe dream.
In Section \ref{sect:flow ops}, we define the flow operators $Y_j^+$ and $Y^+$ which advance red crosses in a super pipe dream to the right by column, as well as their counterparts $X_i^+$ and $X^+$ acting on black crosses.
We observe a surprising symmetry between the flow operators $X^+$ and $Y^+$ (Proposition \ref{prop:flow symmetry}) in Section \ref{sect:flow symmetry}.

Rectification is defined in Section \ref{sect:cauchy rectification}, where it is used to give a new combinatorial proof of the Cauchy identities for Schubert and Grothendieck polynomials.
Further applications of flowing and rectification are explored in Section \ref{sect:applications}.
These include bijective proofs of a derivative formula for Grothendieck polynomials first observed in \cite{PechenikSpeyerWeigandt21} (Section \ref{subsect:nabla identities}) and a restricted descent Pieri rule (Section \ref{subsect:pieri rule}), the latter of which will lead to a new insertion algorithm on pipe dreams.
We also derive an identity for Stanley symmetric functions (Section \ref{subsect:Stanley symmetric}) and a recurrence for double Grothendieck polynomials using Bergeron--Sottile operators generalizing that of \cite{NadeauSpinkTewari24} (Section \ref{subsect:R_k recurrence}).

Finally, in Section \ref{sect:RSK} we obtain our variant of dual RSK by applying the insertion algorithm  rectification to reduced pipe dreams for Grassmannian permutations, which are known to be in 1-to-1 correspondence with semi-standard Young tableaux.
We will see that the ``insertion tableau'' we obtain through our variant of dual RSK agrees with the insertion tableau of the usual dual RSK correspondence (Proposition \ref{prop:pipe dream dual RSK}).

\section{Permutations}
\label{sect:combinatorics}

We use the notation $\bbN := \{1,2,\dots\}$ and $[n] := \{1,\dots,n\}$.

Write $S_n$ for the group of permutations on $\bbN$ with \textit{support} $\{i \mid w(i)\neq i\}$ contained in $[n]$, and set $S_\infty := \bigcup_{n=1}^\infty S_n$.
We will often write permutations in $S_\infty$ using one-line notation.
For example, $1423$ denotes the permutation $w\in S_4$ with $1\mapsto 1$ and $2\mapsto 4\mapsto 3\mapsto 2$.

For $i\in \bbN$, write $s_i\in S_\bbN$ for the simple transposition that swaps $i$ and $i+1$.
The simple transpositions $s_1,\dots,s_{n-1}$ generate $S_n$ under the relations \begin{align*}
    s_i^2 &= 1, \\
    s_is_j &= s_js_i
    \quad\text{(for $\abs{i-j}>1$)},
    % \tag{commutation relation}
    \\
    s_is_{i+1}s_i &= s_{i+1}s_is_{i+1}.
    % \tag{braid relation}
\end{align*}

A \textit{word} for $w\in S_\infty$ is a sequence $\mathbf a := (a_1,\dots,a_r)$ of positive integers such that $w = s_{a_1}\cdots s_{a_r}$.
The minimum length $\ell(w)$ among words for $w$ is called the \textit{length} of $w$, and words for $w$ attaining this length are called \textit{reduced}.

The \textit{code} of $w$ is the sequence $\code(w) = (c_1,c_2,\dots)\in \bbN^\infty$ determined by $c_i = \abs{\{j> i \mid w(j) < w(i)\}}$.
This is a refinement of the length of $w$ in the sense that $\ell(w) = \sum_i c_i$.
Codes give a bijection between $S_\infty$ and the subset of $\bbN^\infty$ of sequences with finitely many nonzero terms.

The \textit{descent set} of $w$ is the collection of indices $\Des(w) = \{i \mid w(i) > w(i+1)\}$.
Notice that $i\in \Des(w)$ if and only if $\ell(ws_i) = \ell(w) - 1$.

The \textit{Demazure algebra} (or \textit{$0$-Hecke algebra}) is the algebra $H$ generated by elements $\{e_i \mid i\in\bbN\}$ subject to the relations \begin{align*}
    e_i^2 &= e_i, \\
    e_ie_j &= e_je_i
    \quad\text{ (for $\abs{i-j}>1$)},
    \\
    e_ie_{i+1}e_i
    &= e_{i+1}e_ie_{i+1}.
\end{align*}
There is an isomorphism of the underlying abelian group of the group ring $\bbZ[S_\infty]$ with that of $H$ given by sending the basis element $w\in S_\infty$ to the element $\partial(\mathbf a) := e_{a_1}\cdots e_{a_{\ell}}\in H$, where $\mathbf a = (a_1,\dots,a_{\ell})$ is any reduced word for $w$.
By abuse of notation, we will identify $w$ with the element $\partial(\mathbf a)$.
If $u,v\in S_\infty$, then we write $u*v$ to distinguish the product of $u$ and $v$ in $H$ from their usual product $uv$ in $\bbZ[S_\infty]$.

\section{Pipe dreams}
\label{sect:pipe dreams}

\begin{figure}
\begin{tikzpicture}[scale=0.45]
    \begin{shift}
    \draw[very thick] (2.5,-7.5) -- (2.5,-0.5);
    \draw[very thick] (0.5,-2.5) -- (7.5,-2.5);
    \drawRCGraph{7}{7}{
        9,9,9,9,0,0,0,
        9,9,9,0,0,1,0,
        9,9,1,1,0,0,0,
        9,0,1,0,0,0,0,
        0,0,0,0,0,0,0,
        0,0,0,0,1,0,0,
        1,0,0,0,0,0,0
    }
    \drawNorthLabels{0}{1}{-1,0,1,2,3,4,5}
    \drawWestLabels {1}{0}{-1,0,1,2,3,4,5}

    \draw[<->] (8.5,-4) -- (11.5,-4);
    \end{shift}
    
    \begin{shift}[(12,0)]
    \draw[very thick] (2.5,-7.5) -- (2.5,-0.5);
    \draw[very thick] (0.5,-2.5) -- (7.5,-2.5);
    \drawPD{7}{7}{
        9,9,9,8,0,0,0,
        9,9,8,0,0,1,0,
        9,8,1,1,0,0,0,
        8,0,1,0,0,0,0,
        0,0,0,0,0,0,0,
        0,0,0,0,1,0,0,
        1,0,0,0,0,0,0
    }
    \end{shift}
\end{tikzpicture}

\caption{
    A reduced pipe dream $P$ with permutation $w = \partial(3,2,1,2,6,3) = 4231576$.
}
\label{fig:pipe dream}
\end{figure}
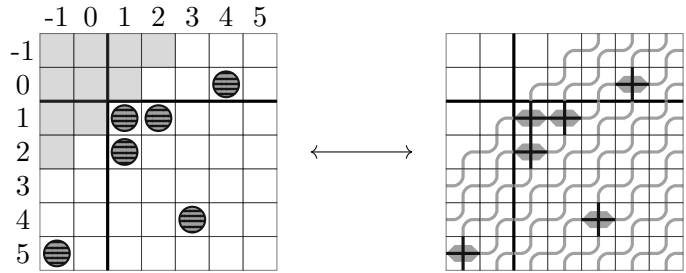

\subsection{Pipe dreams}
\label{subsect:pipe dreams}

Visualize $\bbZ^2$ as an infinite playing board of square tiles.
A position $(i,j)\in \bbZ^2$ will refer to the tile in row $i$ and column $j$, with row indices increasing downwards and column indices increasing rightwards.

\begin{definition}[\cite{FominKirillov96,BergeronBilley93,KnutsonMiller05}]
\label{def:pipe dreams}
A \textit{(single) pipe dream} $P$ is any finite subset of the half-space
\[
    \calH := \{(i,j)\in \bbZ^2 \mid i+j-1 \geq 1\}.
\]
We regard $P$ as a placement of finitely many black checkers in the region $\calH$, with each position being occupied by at most one checker.
\end{definition}

Write $\PD$ for the set of all pipe dreams.
The terminology for a pipe dream $P$ comes from the more typical visualization of $P$ as a tiling of $\calH_0 := \{(i,j)\in \bbZ^2 \mid i+j-1\geq 0\}$ using \textit{cross}, \textit{bump}, and \textit{elbow} tiles \[
    \bigxcross\qquad\bigbump\qquad\bigrelbow
\] whose locations are given by $P$, $\calH\sm P$, and $\calH_0\sm \calH$, respectively.
Such a tiling depicts a configuration of countably many pipes winding in the plane.
Our visualization using checkers is more in line with the original \textit{RC-graph} terminology for $P$.
See Figure \ref{fig:pipe dream} for an example of both visualizations.

Each pipe dream $P$ has a \textit{word} $\bfa(P) := (a_1,\dots,a_p)$ obtained by reading off the values $i+j-1$ of each $(i,j)\in P$ from right-to-left, beginning with the top row and moving down
\cite{BergeronBilley93,FominKirillov96,FominKirillov94}.
The \textit{permutation} of $P$ is the permutation $\partial(P) := \partial(\bfa(P))$ (see Figure \ref{fig:pipe dream}).

Say that $P\in \PD$ is \emph{reduced} if $\bfa(P)$ is a reduced word.
Write $\PD_0$ for the set of all reduced pipe dreams.
If $P\in \PD(w)$, then $P$ is reduced if and only if $\abs{P} = \ell(w)$.

For each $w\in S_\infty$, write $\PD(w)$ for the set of pipe dreams with permutation $w$.
There is a natural bijection $\PD(w)\to \PD(w^{-1})$ given by sending a pipe dream $P$ to its \textit{transpose} \[
    P^t := \{(j,i) \mid (i,j)\in P\}.
\]

A pipe dream $P\in \PD$ is \textit{ordinary} if $P\sseq \bbN\times\bbN$ and we write $\PD^+$ for the collection of these pipe dreams.
Our conventions differ from the standard ones in that what we call ordinary pipe dreams are just pipe dreams elsewhere (e.g. \cite{BergeronBilley93,KnutsonMiller05}).

Throughout this paper, we will freely allow decoration combos on sets of pipe dreams.
For instance, $\PD_0^+(w)$ is understood to mean the family $\PD_0\cap \PD^+\cap \PD(w)$ of reduced, ordinary pipe dreams with permutation $w$.

\subsection{Ladder moves and chute moves}
\label{subsect:ladder moves}

\begin{figure}
\begin{tikzpicture}[scale=0.45]
    \draw[decorate,decoration={brace,amplitude=4pt},xshift=6pt,yshift=0]
    (0,-5.5) -- (0,-0.5) node [midway,left,xshift=-2pt] {$k$};

    % PD #1
    \begin{scope}[shift={(0,0)}]
    \drawRCGraph{2}{2}{0,0,1,1}
    \node at (1,-2.75) {$\vdots$};
    \node at (2,-2.75) {$\vdots$};
    \begin{scope}[shift={(0,-3)}]
    \drawRCGraph{2}{2}{1,1,1,0}
    \end{scope}
    \node at (1.5,-6.5) {$P$};
    % \draw[->] (3,-3) -- (4,-3);
    \end{scope}
    
    % PD #2
    \begin{scope}[shift={(4,0)}]
    \drawRCGraph{2}{2}{0,1,1,1}
    \node at (1,-2.75) {$\vdots$};
    \node at (2,-2.75) {$\vdots$};
    \begin{scope}[shift={(0,-3)}]
    \drawRCGraph{2}{2}{1,1,0,0}
    \end{scope}
    \node at (1.5,-6.5) {$Q$};
    \end{scope}

    % PD #3
    \begin{scope}[shift={(8,0)}]
    \drawRCGraph{2}{2}{0,1,1,1}
    \node at (1,-2.75) {$\vdots$};
    \node at (2,-2.75) {$\vdots$};
    \begin{scope}[shift={(0,-3)}]
    \drawRCGraph{2}{2}{1,1,1,0}
    \end{scope}
    \node at (1.5,-6.5) {$R$};
    \end{scope}
\end{tikzpicture}
\caption{
    Illustration of a ladder move.
}
\label{fig:ladder move}
\end{figure}
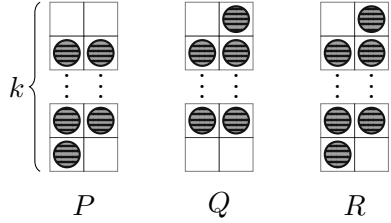

We set some terminology:
for $k\geq 1$, a \textit{$k$-ladder} will refer to any $k$-by-$2$ rectangle $L$ situated inside $\bbZ^2$.
Similarly, a \textit{$k$-chute} is any $2$-by-$k$ rectangle $C$ in $\bbZ^2$.
Ladders/chutes where $k\geq 2$ are said to be \textit{big}.

Suppose that $P$, $Q$, and $R$ are pipe dreams that are the same everywhere, except for in big ladder $L$ where the local configurations are shown in Figure \ref{fig:ladder move}.
It can be checked in both cases that $\partial(P) = \partial(Q) = \partial(R)$.
In this case, say that $P$, $Q$, and $R$ are obtained from one another by a \textit{ladder move} \cite{BergeronBilley93,Tyurin18}.

The ladder move between $P$ and $Q$ is special since in this case $\abs{P} = \abs{Q}$.
In particular, $P$ is reduced if and only if $Q$ is reduced, so we refer to this kind of ladder move as a \textit{reduced ladder move}.
Visualizing $P$ and $Q$ as wiring diagrams, notice that two border edges of $L$ are connected by a pipe in $P$ if and only if they are connected by a pipe in $Q$.

There is dually a notion of a \textit{chute move}, which is just the transpose of a ladder move.

\subsection{Super pipe dreams}
\label{subsect:super pipe dreams}

\begin{definition}
\label{def:super pipe dreams}
A \textit{super pipe dream} is a pair of pipe dreams $\bfP = (P_x,P_y)$,\footnote{
    The terminology comes from thinking of $\bfP$ as a $\bbZ/2\bbZ$-graded pipe dream.
} which we visualize as a placement of finitely many black and {\color{y color 1}red} checkers in $\calH$ whose positions are given, respectively, by $P_x$ and $P_y$.
We do not require $P_x$ and $P_y$ to be disjoint, so a position may contain both a black checker and a {\color{y color 1}red} checker (but no more than one of each).
\end{definition}

Write $\SPD$ for the set of all super pipe dreams.
We regard $\PD$ as a subset of $\SPD$ via the embedding $P\mapsto (P,\varnothing)$.
If we wish to visualize a super pipe dream $\bfP$ as a wiring diagram in $\calH_0$, we can do so using the tiles \[
    \bigxcross\qquad
    \bigycross\qquad
    \bigxycross\qquad
    \bigbump\qquad
    \bigrelbow.
\]
See Figure \ref{fig:super pipe dream} for an example.

A super pipe dream $\bfP = (P_x,P_y)$ is \textit{ordinary} if its \textit{underlying pipe dream} $P = P_x\cup P_y$ is ordinary.
Likewise, the \textit{word} and \textit{permutation} of $\bfP$ are the word and permutation of $P$.
Say that $\bfP$ is \textit{reduced} if $P$ is reduced and $P_x\cap P_y = \varnothing$.
Equivalently, $\bfP \in \SPD(w)$ is reduced if and only if $\abs{P_x} + \abs{P_y} = \ell(w)$.
Use the same ornamentations (e.g. $\SPD^+, \SPD_0, \SPD(w)$) as with single pipe dreams to denote the corresponding sets of super pipe dreams.

We end this section with three involutions on super pipe dreams: let $\bfP\in \SPD$.
\begin{itemize}
\item
The \textit{complement} of $\bfP$ is $\ol{\bfP} := (P_y,P_x)$.
\item
The \textit{transpose} of $\bfP$ is $\bfP^t := (P_x^t,P_y^t)$.
\item
The \textit{adjoint} of $\bfP$ is $\bfP^\dagger := \ol{\bfP^t} = \ol{\bfP}^t = (P_y^t,P_x^t)$.
\end{itemize}
Complementation permutes each $\SPD(w)$ while transposition and adjunction give bijections $\SPD(w)\to \SPD(w^{-1})$.

\begin{figure}
\begin{tikzpicture}[scale=0.45]
    \begin{shift}
    \draw[very thick] (2.5,-7.5) -- (2.5,-0.5);
    \draw[very thick] (0.5,-2.5) -- (7.5,-2.5);
    \drawRCGraph{7}{7}{
        9,9,9,9,0,0,3,
        9,9,9,1,0,0,0,
        9,9,0,0,3,0,0,
        9,0,0,1,2,0,0,
        2,2,0,1,0,0,0,
        0,0,0,0,0,0,0,
        0,1,0,0,0,2,0
    }
    \drawNorthLabels{0}{1}{-1,0,1,2,3,4,5}
    \drawWestLabels{1}{0}{-1,0,1,2,3,4,5}
    \end{shift}
    
    \draw[<->] (8.5,-4) -- (11.5,-4);
    
    \begin{shift}[(12,0)]
    \draw[very thick] (2.5,-7.5) -- (2.5,-0.5);
    \draw[very thick] (0.5,-2.5) -- (7.5,-2.5);
    \drawPD{7}{7}{
        9,9,9,8,0,0,3,
        9,9,8,1,0,0,0,
        9,8,0,0,3,0,0,
        8,0,0,1,2,0,0,
        2,2,0,1,0,0,0,
        0,0,0,0,0,0,0,
        0,1,0,0,0,2,0
    }
    \end{shift}
\end{tikzpicture}

\caption{
    A super pipe dream $\bfP$.
    % with $x$-crosses 
    % $\{(-1,5),(0,2),(1,3),(2,2),(3,2),(5,0)\}$
    % and $y$-crosses
    % $\{(-1,6),(-1,5),(1,3),(2,3),(3,0),(3,-1)\}$.
}
\label{fig:super pipe dream}
\end{figure}

\subsection{Schubert and Grothendieck polynomials}
\label{subsect:Schubert polys}

The \textit{weight} of a super pipe dream $\bfP\in \SPD(w)$ is the monomial in $\bbZ[\beta][x;y] := \bbZ[\beta][x_i;y_j]_{i,j\in\bbZ}$ given by \[
    \wt(\bfP) := \beta^{\abs{P_x}+\abs{P_y}-\ell(w)}
    \bigg(\prod_{(i,j)\in P_x} x_i\bigg)
    \bigg(\prod_{(i,j)\in P_y} y_j\bigg).
\]
Note that if $P\in \PD(w)$, then (identifying $P$ with $(P,\varnothing)\in \SPD(w)$) \[
    \wt(P) = \beta^{\abs{P}-\ell(w)}\prod_{(i,j)\in P} x_i.
\]

\begin{definition}[\cite{BergeronBilley93,FominKirillov96,FominKirillov94}]
Let $w\in S_\infty$.
The
\textit{Schubert polynomial} and
\textit{($\beta$-)Grothendieck polynomial} of $w$,
as well as their doubled variants,\footnote{
    Our definition of a double Schubert polynomial is nonstandard and is typically what one would call $\frakS_w(x;-y)$.
}
are defined to be the polynomials
\begin{align*}
    \frakS_w(x)
    &:= \sum_{P\in \PD^+_0(w)} \wt(P),
    &
    \frakG_w^{(\beta)}(x)
    &:= \sum_{P\in \PD^+(w)} \wt(P),
    \\
    \frakS_w(x;y)
    &:= \sum_{\bfP\in \SPD_0^+(w)} \wt(\bfP),
    &
    \frakG_w^{(\beta)}(x;y)
    &:= \sum_{\bfP\in \SPD^+(w)} \wt(\bfP).
\end{align*}
\end{definition}

Notice that $\frakS_w(x) = \frakG_w^{(0)}(x)$ and $\frakS_w(x;y) = \frakG_w^{(0)}(x;y)$, as well as $\frakS_w(x) = \frakS_w(x;0)$ and $\frakG_w^{(\beta)}(x) = \frakG_w^{(\beta)}(x;0)$.
If we view $\beta$ as being in degree $-1$ (and each $x_i$ and $y_j$ as being in degree $1$), then each of the above polynomials is homogeneous of degree $\ell(w)$.

Let $(-)^\dagger$ denote the ring map on $\bbZ[\beta][x;y]$ given by swapping the sets of $x$ and $y$ variables (i.e. $x_i\leftrightarrow y_i$ for $i\in\bbZ$).
Then $\wt(\bfP^\dagger) = \wt(\bfP)^\dagger$ for all $\bfP\in \SPD$, hence we get the identity \[
    \frakG_{w^{-1}}^{(\beta)}(x;y)
    = \sum_{\bfP\in \PD^+(w^{-1})} \wt(\bfP)
    = \sum_{\bfQ\in \PD^+(w)} \wt(\bfQ)^\dagger
    = \frakG_w^{(\beta)}(y;x)
\]

\section{Flow operators on super pipe dreams}
\label{sect:flow ops}

\begin{figure}
\begin{tikzpicture}[scale=0.45]
    % DIAGRAM 1
    \begin{shift}
    \draw[very thick] (0.5,-6.5) -- (0.5,-0.5);
    \draw[very thick] (0.5,-1.5) -- (6.5,-1.5);
    \drawRCGraph{6}{6}{
        9,0,0,0,0,0,
        1,0,0,2,0,0,
        3,2,1,2,3,0,
        2,0,3,0,0,0,
        1,3,0,0,1,0,
        2,2,0,1,0,0
    }
    \drawNorthLabels{0}{1}{1,2,3,4,5,6}
    \drawWestLabels{1}{0}{0,1,2,3,4,5}
    \node at (3.5,-7.5) {$\bfP = Y^+_{\geq 6}\bfP$};
    \draw[->] (7,-4) -- (9,-4)
    node[midway,above] {$Y^+_5$};
    \end{shift}

    % DIAGRAM 2
    \begin{shift}[(9,0)]
    \draw[very thick] (0.5,-6.5) -- (0.5,-0.5);
    \draw[very thick] (0.5,-1.5) -- (6.5,-1.5);
    \drawRCGraph{6}{6}{
        9,0,0,0,0,0,
        1,0,0,2,0,2,
        3,2,1,2,1,0,
        2,0,3,0,0,0,
        1,3,0,0,1,0,
        2,2,0,1,0,0
    }
    \node at (3.5,-7.5) {$Y^+_{\geq 5}\bfP$};
    \draw[->] (7,-4) -- (9,-4)
    node[midway,above] {$Y^+_4$};
    \end{shift}

    % DIAGRAM 3
    \begin{shift}[(18,0)]
    \draw[very thick] (0.5,-6.5) -- (0.5,-0.5);
    \draw[very thick] (0.5,-1.5) -- (6.5,-1.5);
    \drawRCGraph{6}{6}{
        9,0,0,0,2,0,
        1,0,0,0,0,2,
        3,2,1,1,2,0,
        2,0,3,0,0,0,
        1,3,0,0,1,0,
        2,2,0,1,0,0
    }
    \node at (3.5,-7.5) {$Y^+_{\geq 4}\bfP$};
    \draw[->] (7,-4) -- (8,-4);
    \end{shift}

    % DIAGRAM 4
    \begin{shift}[(0,-9)]
    \draw[->] (-2,-4) -- (0,-4)
    node[midway,above] {$Y^+_3$};
    \draw[very thick] (0.5,-6.5) -- (0.5,-0.5);
    \draw[very thick] (0.5,-1.5) -- (6.5,-1.5);
    \drawRCGraph{6}{6}{
        9,0,0,0,2,0,
        1,0,0,2,0,2,
        3,2,1,1,2,0,
        2,0,1,0,0,0,
        1,3,0,0,1,0,
        2,2,0,1,0,0
    }
    \node at (3.5,-7.5) {$Y^+_{\geq 3}\bfP$};
    \draw[->] (7,-4) -- (9,-4)
    node[midway,above] {$Y^+_2$};
    \end{shift}

    % DIAGRAM 5
    \begin{shift}[(9,-9)]
    \draw[very thick] (0.5,-6.5) -- (0.5,-0.5);
    \draw[very thick] (0.5,-1.5) -- (6.5,-1.5);
    \drawRCGraph{6}{6}{
        9,0,0,0,2,0,
        1,0,0,2,0,2,
        3,1,2,1,2,0,
        2,0,3,0,0,0,
        1,0,3,0,1,0,
        2,0,0,1,0,0
    }
    \node at (3.5,-7.5) {$Y^+_{\geq 2}\bfP$};
    \draw[->] (7,-4) -- (9,-4)
    node[midway,above] {$Y^+_1$};
    \end{shift}

    % DIAGRAM 6
    \begin{shift}[(18,-9)]
    \draw[very thick] (0.5,-6.5) -- (0.5,-0.5);
    \draw[very thick] (0.5,-1.5) -- (6.5,-1.5);
    \drawRCGraph{6}{6}{
        9,2,0,0,2,0,
        0,1,0,2,0,2,
        1,3,2,1,2,0,
        0,2,3,0,0,0,
        0,1,3,0,1,0,
        0,0,0,1,0,0
    }
    \node at (3.5,-7.5) {$Y^+_{\geq 1}\bfP = Y^+\bfP$};
    \end{shift}
\end{tikzpicture}

\caption{
    Computation of $Y^+\bfP$ for a super pipe dream $\bfP\in \SPD^+(w)$ for $w = 273561498$.
}
\label{fig:flow example}
\end{figure}
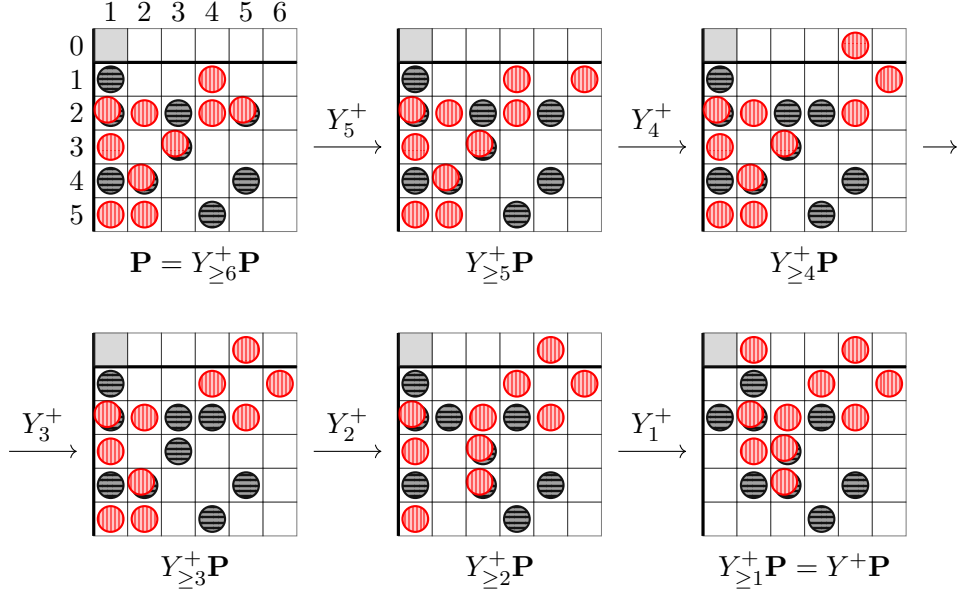

In this section, we will define a collection of operators on super pipe dreams which will ``flow'' red checkers between columns while preserving the row distribution of black checkers, and vice versa.
These operators will be the fundamental building block which allows us to separate a super pipe dream into two single pipe dreams.

\subsection{Flowing red checkers}
\label{subsect:flow definition}

We begin by defining, for each $j\in \bbZ$, an operator $Y^+_j$ that takes a super pipe dream $\bfP$ with no red checkers in column $j+1$ and modifies it by ``flowing'' the red checkers in column $j$ to into column $j+1$.
More precisely,

\begin{proposition}
    Let $w\in S_\infty$ and $j\in \bbZ$.
    There exists a bijection \[
        Y^+_j\colon 
        \left\{\begin{tabular}{@{} c @{}}
            $\bfP\in \SPD(w)$ \\
            with no red checkers \\
            in column $j+1$
        \end{tabular}\right\}
        \overset\sim\longrightarrow
        \left\{\begin{tabular}{@{} c @{}}
            $\bfQ\in \SPD(w)$ \\
            with no red checkers \\
            in column $j$
        \end{tabular}\right\}
    \]
    satisfying the following properties: if $Y^+_j\bfP = \bfQ$, then
    \begin{itemize}
        \item[(i)] $\bfP$ and $\bfQ$ differ only in columns $j$ and $j+1$,
        \item[(ii)] $\bfP$ and $\bfQ$ have the same number of black checkers in each row,
        \item[(iii)] $\bfQ$ has the same number of red checkers in column $j+1$ that $\bfP$ has in column $j$, and $\bfQ$ has no red checkers in column $j$.
    \end{itemize}
\end{proposition}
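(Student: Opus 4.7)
The plan is to construct $Y_j^+$ explicitly and then verify its properties. Property (i) forces the operator to act trivially outside columns $j$ and $j+1$, so the problem reduces to defining a bijection on the two-column strip: source configurations are row-indexed sequences of tile-pairs in which no column-$(j{+}1)$ tile contains a red checker, and target configurations are the analogous sequences in which no column-$j$ tile contains a red checker. The bijection must preserve the permutation $w$, the row-wise count of black checkers, and the total count of red checkers in the strip.

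I would attempt a top-to-bottom cascade on this strip: scan the rows in order, and each time a red checker is encountered in column $j$, transport it to column $j+1$ by a single elementary local move. These moves must be allowed to involve patterns spanning more than one row, since naively shifting a lone red checker from column $j$ to column $j+1$ changes the underlying pipe dream's reading word. Each move is chosen so that (1) the Demazure product of the reading word is preserved, (2) every row's black-checker count is preserved, and (3) only columns $j$ and $j+1$ are touched. The underlying algebraic identities are the commutation $e_ke_\ell = e_\ell e_k$ for $|k-\ell|>1$ and the braid relation $e_k e_{k+1} e_k = e_{k+1} e_k e_{k+1}$, with $k = i+j-1$ indexing the diagonal of the red checker being moved; together with the idempotence $e_k^2 = e_k$ (relevant when an $xy$-cross collapses), these are the only relations available for rearranging the word.

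Once the algorithm is written down, properties (i)--(iii) will follow by direct inspection of the catalog of moves. Bijectivity will be obtained by exhibiting an inverse flow $Y_j^-$ that reverses each elementary move, with termination of both the forward and the inverse procedures guaranteed by a monovariant such as the number of red checkers remaining in column $j$ (resp.\ $j+1$).

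The main obstacle is the catalog of elementary moves itself: identifying the correct local patterns and verifying in each case that the chosen rearrangement preserves the Demazure product. This is a finite but delicate case analysis, made subtler by the fact that the flow of a single red checker may force black checkers in several rows to rearrange simultaneously, so the moves cannot be specified strictly one row at a time. Producing a catalog that is both exhaustive (every legal source configuration admits a move) and consistent (the images are precisely the legal target configurations, with a well-defined reverse rule) is where the bulk of the work lies.
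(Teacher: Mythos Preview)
Your overall plan matches the paper's: construct $Y_j^+$ explicitly as a sequence of local ladder-type moves on the two-column strip, check that each move factors through the $0$-Hecke relations so that $\partial$ is preserved, and read off properties (i)--(iii) from the shape of the moves. The ``catalog'' you anticipate is not as large as you fear: in the paper it consists of exactly two cases, depending on whether the tile immediately to the right of the red checker being processed is empty or occupied. In the occupied case one simply swaps the two tiles (a $1$-ladder); in the empty case one forms a big ladder whose southwest corner is the red checker and whose northwest corner is the first empty tile above it in column $j$, then moves the red checker to the northeast corner and swaps left and right in every intermediate row. Each such modification decomposes as a sequence of ordinary ladder moves on the underlying pipe dream, so $\partial$ is preserved.

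The one concrete issue in your proposal is the scanning direction. You propose top-to-bottom, but this does not work cleanly: after the topmost red checker has been moved into column $j{+}1$, the big ladder generated by the next red checker below can reach back up into that row and, under the swap rule, drag the red checker back into column $j$. (Try column $j$ with reds in rows $5,6$ and column $j{+}1$ with a black in row $5$.) The paper instead processes the \emph{lowest} red checker first. With bottom-to-top scanning, a single big ladder may sweep several red checkers from column $j$ to $j{+}1$ at once, and the next remaining red checker in column $j$ is guaranteed to sit strictly above the entire ladder just processed. Thus successive ladders are disjoint and strictly ascending, the modifications are independent of one another, and the procedure visibly terminates with an inverse given by the mirror-image rule. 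Once you reverse the scan direction, the rest of your outline goes through essentially verbatim.
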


\begin{proof}
Suppose we are given a super pipe dream $\bfP\in \SPD(w)$ with no red checkers in column $j+1$.
Let $(i,j)$ denote the position of the lowest red checker in column $j$ (for the moment, assume one exists).
Form a $k$-ladder $L$ with $(i,j)$ as its SW corner and modify the arrangement of checkers in $L$ according to one of the following two rules:
\begin{itemize}
    \item[$\lozenge$]
    Suppose there is no checker at $(i,j+1)$.
    Let $(i',j)$ denote the first position above $(i,j)$ with no checkers.
    Take $L$ to be the big $k$-ladder with SW corner $(i,j)$ and NE corner $(i',j+1)$.
    
    Modify $L$ by first moving the red checkers at $(i,j)$ to $(i',j+1)$.
    Then modify each of the $k-2$ intermediate rows of $L$ by swapping the checker(s) appearing on the left with the checker(s) appearing on the right.
    Typical computations of this case are illustrated in Figure \ref{fig:column flow}.
    
    \item[$\blacklozenge$]
    Suppose there is a (black) checker at $(i,j+1)$.
    Take $L$ to be the $1$-ladder $\{(i,j),(i,j+1)\}$.

    Modify $L$ by swapping the checker(s) at $(i,j)$ with the checker at $(i,j+1)$.
\end{itemize}
This has the effect of moving all of the red checkers in column $j$ of $L$ to column $j+1$ without changing the quantity of black checkers in each row of $L$.
If there are still red checkers in column $j$, then repeat this process with the lowest red checker remaining in this column, and so forth, until we eventually obtain a super pipe dream $\bfQ$ which has no red checkers in column $j$.
Notice that each successive ladder is strictly higher than the previous, so the modifications done to $\bfP$ in each ladder to obtain $\bfQ$ can be performed in any order independent from one another.

Take $Y^+_j\bfP := \bfQ$.
It is clear that properties (i), (ii), and (iii) are satisfied by this construction.
To see why $\partial(\bfQ) = \partial(\bfP)$, observe that each modification done in a big ladder can be factored as a series of ladder moves; see Figure \ref{fig:column flow in ladders} to see this done for the second ladder in Figure \ref{fig:column flow}.
Since the modification to $\bfP$ done in each big ladder can be undone, $Y^+_j$ restricts to the desired bijection.
\end{proof}

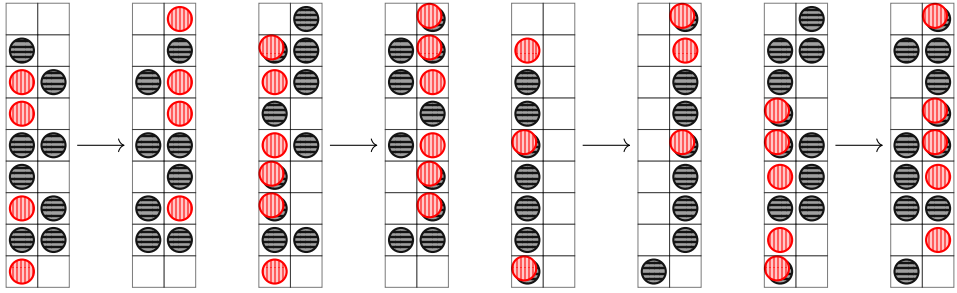
\begin{figure}
\begin{tikzpicture}[scale=0.42]
    % DIAGRAM A
    \begin{shift}
    \drawRCGraph{9}{2}{
        0,0,
        1,0,
        2,1,
        2,0,
        1,1,
        1,0,
        2,1,
        1,1,
        2,0
    }
    \draw[->] (2.75,-5) -- (4.25,-5);
    \begin{shift}[(4,0)]\drawRCGraph{9}{2}{
        0,2,
        0,1,
        1,2,
        0,2,
        1,1,
        0,1,
        1,2,
        1,1,
        0,0
    }\end{shift}
    \end{shift}
    
    % DIAGRAM B
    \begin{shift}[(8,0)]
    \drawRCGraph{9}{2}{
        0,1,
        3,1,
        2,1,
        1,0,
        2,1,
        3,0,
        3,0,
        1,1,
        2,0
    }
    \draw[->] (2.75,-5) -- (4.25,-5);
    \begin{shift}[(4,0)]\drawRCGraph{9}{2}{
        0,3,
        1,3,
        1,2,
        0,1,
        1,2,
        0,3,
        0,3,
        1,1,
        0,0
    }\end{shift}
    \end{shift}

    % DIAGRAM C
    \begin{shift}[(16,0)]
    \drawRCGraph{9}{2}{
        0,0,
        2,0,
        1,0,
        1,0,
        3,0,
        1,0,
        1,0,
        1,0,
        3,0
    }
    \draw[->] (2.75,-5) -- (4.25,-5);
    \begin{shift}[(4,0)]\drawRCGraph{9}{2}{
        0,3,
        0,2,
        0,1,
        0,1,
        0,3,
        0,1,
        0,1,
        0,1,
        1,0
    }\end{shift}
    \end{shift}

    % DIAGRAM D
    \begin{shift}[(24,0)]
    \drawRCGraph{9}{2}{
        0,1,
        1,1,
        1,0,
        3,0,
        3,1,
        2,1,
        1,1,
        2,0,
        3,0
    }
    \draw[->] (2.75,-5) -- (4.25,-5);
    \begin{shift}[(4,0)]\drawRCGraph{9}{2}{
        0,3,
        1,1,
        0,1,
        0,3,
        1,3,
        1,2,
        1,1,
        0,2,
        1,0
    }\end{shift}
    \end{shift}
\end{tikzpicture}

\caption{
    Typical modifications to $\bfP$ done in a big ladder $L$ when acted on by a flow operator $Y_j^+$.
}
\label{fig:column flow}
\end{figure}

Write $Y^-_{j+1}$ for the inverse of $Y^+_j$, which acts by flowing red checkers in column $j+1$ into column $j$.

For $j\in \bbZ$, define $Y^+_{\geq j} := Y^+_jY^+_{j+1}\cdots$, by which we mean $Y^+_{\geq j}\bfP := Y^+_jY^+_{j+1}\cdots Y^+_M\bfP$ where $M > j$ is large enough so that all of the red checkers in $\bfP$ are in or west of column $M$.
In words, $Y^+_{\geq j}$ acts by flowing all red checkers in or east of column $j$ to the right by one column.
This gives a bijection \[
    Y^+_{\geq j}\colon
    \SPD(w)
    \overset\sim\longrightarrow
    \left\{\begin{tabular}{@{} c @{}}
        $\bfQ\in \SPD(w)$ \text{with no} \\
        red checkers in column $j$
    \end{tabular}\right\}
\] with inverse given by $Y^-_{\geq j+1} := \cdots Y^-_{j+2}Y^-_{j+1}$, defined analogously.

Finally, set $Y^+ := \cdots Y^+_{-1} Y^+_0 Y^+_1\cdots$ i.e. $Y^+\bfP := Y^+_{\geq m}\bfP$ where $m$ is small enough so that all of the red checkers $\bfP$ are in columns on or east of $m$.
Thus $Y^+$ acts by flowing every red checker to the right by one column (see Figure \ref{fig:flow example}).
This gives a bijection $\SPD(w) \overset\sim\longrightarrow \SPD(w)$ with inverse $Y^- := \cdots Y^-_1 Y^-_0 Y^-_{-1} \cdots$.

If $m\leq M$ are as above for a super pipe dream $\bfP$, then we can think of $Y^+\bfP$ as being obtained as the final pipe dream in the chain \[
\begin{tikzcd}
    \bfP = Y^+_{\geq M+1}\bfP \arrow[r, "Y^+_M"]
    & Y^+_{\geq M}\bfP \arrow[r,"Y^+_{M-1}"]
    & \cdots \arrow[r,"Y^+_{m+1}"]
    & Y^+_{\geq m} \bfP = Y^+\bfP
\end{tikzcd}.
\]
We refer to the link $Y^+_{\geq j+1}\bfP \overset{Y^+_j}\longrightarrow Y^+_{\geq j}\bfP$ as \textit{step $j$} of $Y^+\bfP$.
This is the step where the red checkers in column $j$ flow into column $j+1$.

Extend each of the operators $Y^\pm_j$, $Y^\pm_{\geq j}$, $Y^\pm$ linearly to an endomorphism of $\bbZ\SPD$, the free abelian group on $\SPD$; if an operator is not defined on all of $\SPD$, then declare it to be $0$ on those $\bfP$ outside its domain.
There are corresponding endomorphisms of the polynomial ring $\bbZ[\beta][x;y]$ (which we also denote $Y^\pm_j$, $Y^\pm_{\geq j}$, $Y^\pm$) intertwining with the weight map $\wt$.
For example, the endomorphism $Y^+_j$ of $\bbZ[\beta][x;y]$ is given by the evaluations $y_j \mapsto y_{j+1}$ and $y_{j+1}\mapsto 0$ and satisfies $\wt(Y^+_j\bfP) = Y^+_j\wt(\bfP)$ for all $\bfP\in \SPD$.

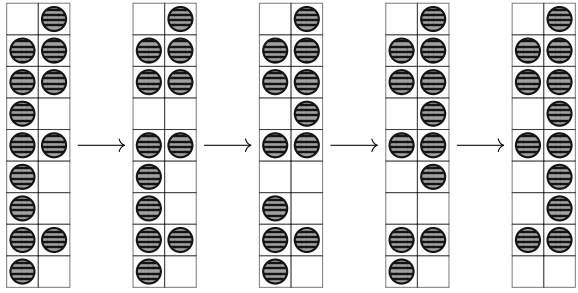
\begin{figure}
\begin{tikzpicture}[scale=0.42]
    % DIAGRAM 1
    \begin{shift}
    \drawRCGraph{9}{2}{
        0,1,
        1,1,
        1,1,
        1,0,
        1,1,
        1,0,
        1,0,
        1,1,
        1,0
    }
    \draw[->] (2.75,-5) -- (4.25,-5);
    \end{shift}
    
    % DIAGRAM 2
    \begin{shift}[(4,0)]
    \drawRCGraph{9}{2}{
        0,1,
        1,1,
        1,1,
        0,0,
        1,1,
        1,0,
        1,0,
        1,1,
        1,0
    }
    \draw[->] (2.75,-5) -- (4.25,-5);
    \end{shift}
    
    % DIAGRAM 3
    \begin{shift}[(8,0)]
    \drawRCGraph{9}{2}{
        0,1,
        1,1,
        1,1,
        0,1,
        1,1,
        0,0,
        1,0,
        1,1,
        1,0
    }
    \draw[->] (2.75,-5) -- (4.25,-5);
    \end{shift}
    
    % DIAGRAM 4
    \begin{shift}[(12,0)]
    \drawRCGraph{9}{2}{
        0,1,
        1,1,
        1,1,
        0,1,
        1,1,
        0,1,
        0,0,
        1,1,
        1,0
    }
    \draw[->] (2.75,-5) -- (4.25,-5);
    \end{shift}
    
    \begin{shift}[(16,0)]
    \drawRCGraph{9}{2}{
        0,1,
        1,1,
        1,1,
        0,1,
        1,1,
        0,1,
        0,1,
        1,1,
        0,0
    }
    \end{shift}
\end{tikzpicture}

\caption{
    The modification done to the underlying pipe dream of the second ladder in Figure \ref{fig:column flow} broken up as a series of ladder moves.
}
\label{fig:column flow in ladders}
\end{figure}

\subsection{Flowing black checkers and properties of flowing}
\label{subsect:flow properties}

By symmetry, there is also an operation $X^+_i$ for each $i\in \bbZ$ which takes a super pipe dream $\bfP$ with no $x$-crosses in row $i+1$ and ``flows'' the $x$-crosses in row $i$ into row $i+1$.
Indeed, mirroring the construction for $Y^+_j$ yields the same result as setting $X^+_i\bfP := (Y^+_i(\bfP^\dagger))^\dagger$.
We record this observation in the following proposition:

\begin{figure}
\begin{tikzpicture}[scale=0.45]
    % DIAGRAM 1
    \begin{shift}
    \draw[very thick] (1.5,-0.5) -- (1.5,-6.5);
    \draw[very thick] (0.5,-0.5) -- (6.5,-0.5);
    \drawRCGraph{6}{6}{
        9,1,0,0,2,0,
        0,3,2,1,2,3,
        0,2,0,3,0,0,
        0,1,3,0,0,1,
        0,2,2,0,1,0,
        0,0,0,0,0,0
    }
    \drawNorthLabels{0}{1}{0,1,2,3,4,5}
    \drawWestLabels{1}{0}{1,2,3,4,5,6}
    \node at (3.5,-7.5) {$\bfP = X^+_{\geq 6}\bfP$};
    \draw[->] (7,-4) -- (9,-4)
    node[midway,above] {$X^+_5$};
    \end{shift}

    % DIAGRAM 2
    \begin{shift}[(9,0)]
    \draw[very thick] (1.5,-0.5) -- (1.5,-6.5);
    \draw[very thick] (0.5,-0.5) -- (6.5,-0.5);
    \drawRCGraph{6}{6}{
        9,1,0,0,2,0,
        0,3,2,1,2,3,
        0,2,0,3,0,0,
        0,1,3,0,0,1,
        0,2,2,0,0,0,
        0,0,0,1,0,0
    }
    \node at (3.5,-7.5) {$X^+_{\geq 5}\bfP$};
    \draw[->] (7,-4) -- (9,-4)
    node[midway,above] {$X^+_4$};
    \end{shift}

    % DIAGRAM 3
    \begin{shift}[(18,0)]
    \draw[very thick] (1.5,-0.5) -- (1.5,-6.5);
    \draw[very thick] (0.5,-0.5) -- (6.5,-0.5);
    \drawRCGraph{6}{6}{
        9,1,0,0,2,0,
        0,3,2,1,2,3,
        0,2,0,3,0,0,
        0,2,2,0,0,0,
        0,1,3,0,1,0,
        0,0,0,1,0,0
    }
    \node at (3.5,-7.5) {$X^+_{\geq 4}\bfP$};
    \draw[->] (7,-4) -- (8,-4);
    \end{shift}

    % DIAGRAM 4
    \begin{shift}[(0,-9)]
    \draw[->] (-2,-4) -- (0,-4)
    node[midway,above] {$X^+_3$};
    \draw[very thick] (1.5,-0.5) -- (1.5,-6.5);
    \draw[very thick] (0.5,-0.5) -- (6.5,-0.5);
    \drawRCGraph{6}{6}{
        9,1,0,0,2,0,
        0,3,2,1,2,3,
        0,2,0,2,0,0,
        0,2,3,0,0,0,
        0,1,3,0,1,0,
        0,0,0,1,0,0
    }
    \node at (3.5,-7.5) {$X^+_{\geq 3}\bfP$};
    \draw[->] (7,-4) -- (9,-4)
    node[midway,above] {$X^+_2$};
    \end{shift}

    % DIAGRAM 5
    \begin{shift}[(9,-9)]
    \draw[very thick] (1.5,-0.5) -- (1.5,-6.5);
    \draw[very thick] (0.5,-0.5) -- (6.5,-0.5);
    \drawRCGraph{6}{6}{
        9,1,0,0,2,0,
        0,2,0,2,0,2,
        1,3,2,1,2,0,
        0,2,3,0,0,0,
        0,1,3,0,1,0,
        0,0,0,1,0,0
    }
    \node at (3.5,-7.5) {$X^+_{\geq 2}\bfP$};
    \draw[->] (7,-4) -- (9,-4)
    node[midway,above] {$X^+_1$};
    \end{shift}

    % DIAGRAM 6
    \begin{shift}[(18,-9)]
    \draw[very thick] (1.5,-0.5) -- (1.5,-6.5);
    \draw[very thick] (0.5,-0.5) -- (6.5,-0.5);
    \drawRCGraph{6}{6}{
        9,2,0,0,2,0,
        0,1,0,2,0,2,
        1,3,2,1,2,0,
        0,2,3,0,0,0,
        0,1,3,0,1,0,
        0,0,0,1,0,0
    }
    \node at (3.5,-7.5) {$X^+_{\geq 1}\bfP = X^+\bfP$};
    \end{shift}
\end{tikzpicture}

\caption{
    Computation of $X^+\bfP$ for the super pipe dream $\bfP$ from Figure \ref{fig:flow example}.
}
\label{fig:flow duality}
\end{figure}
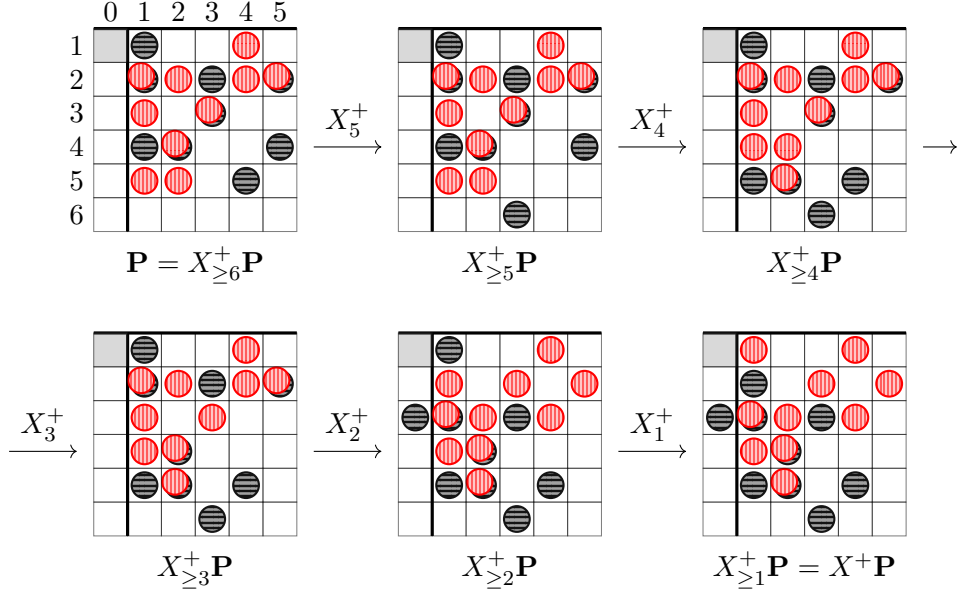

\begin{proposition}
Let $w\in S_\infty$ and $i\in \bbZ$.
There exists a bijection \[
    X^+_i\colon 
    \left\{\begin{tabular}{@{} c @{}}
        $\bfP\in \SPD(w)$ \\
        with no black checkers \\
        in row $i+1$
    \end{tabular}\right\}
    \overset\sim\longrightarrow
    \left\{\begin{tabular}{@{} c @{}}
        $\bfQ\in \SPD(w)$ \\
        with no black checkers \\
        in row $i$
    \end{tabular}\right\}
\]
satisfying the following properties: if $X^+_i\bfP = \bfQ$, then
\begin{itemize}
    \item[(i)] $\bfP$ and $\bfQ$ differ only in rows $i$ and $i+1$,
    \item[(ii)] $\bfP$ and $\bfQ$ have the same number of red checkers in each column,
    \item[(iii)] $\bfQ$ has the same number of black checkers in row $i+1$ that $\bfP$ has in row $i$, and $\bfQ$ has no black checkers in row $i$.
\end{itemize}
\end{proposition}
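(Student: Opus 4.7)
The plan is to derive this proposition directly from its already-proved counterpart for $Y^+_j$ by transporting everything through the adjoint involution $\bfP\mapsto \bfP^\dagger = (P_y^t, P_x^t)$. Concretely, I would define
\[
    X^+_i\bfP := \bigl(Y^+_i(\bfP^\dagger)\bigr)^\dagger
\]
and then check that each of the three claimed properties for $X^+_i$ is exactly the image under $(-)^\dagger$ of the corresponding property for $Y^+_i$.

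The first thing to verify is that the domain and codomain of $X^+_i$ match. Since $(-)^\dagger$ simultaneously transposes and swaps colors, a super pipe dream $\bfP$ has no black checkers in row $i+1$ if and only if $\bfP^\dagger$ has no red checkers in column $i+1$. Thus $Y^+_i$ is defined on $\bfP^\dagger$, produces an element with no red checkers in column $i$, and applying $(-)^\dagger$ again yields an element with no black checkers in row $i$, as required. The assignment is bijective because both $(-)^\dagger$ and $Y^+_i$ are bijective on the relevant sets, so the inverse is $X^-_{i+1}\bfQ := \bigl(Y^-_{i+1}(\bfQ^\dagger)\bigr)^\dagger$. One also needs $\partial(X^+_i\bfP) = \partial(\bfP)$; this follows from $\partial(\bfP^\dagger) = \partial(\bfP)^{-1}$ together with $\partial\bigl(Y^+_i(\bfP^\dagger)\bigr) = \partial(\bfP)^{-1}$, and then applying $(-)^\dagger$ once more inverts back to $w$.

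Properties (i)--(iii) now follow by inspection. Under $(-)^\dagger$, columns $i$ and $i+1$ become rows $i$ and $i+1$, so property (i) for $Y^+_i$ becomes property (i) for $X^+_i$. The roles of black and red are also swapped, so the invariance of the row distribution of black checkers under $Y^+_i$ becomes invariance of the column distribution of red checkers under $X^+_i$, giving (ii). Property (iii) translates similarly: the fact that $Y^+_i$ carries red checkers from column $i$ to column $i+1$ becomes the statement that $X^+_i$ carries black checkers from row $i$ to row $i+1$ while leaving the count intact, with row $i$ subsequently empty of black checkers.

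The only step requiring genuine care is the compatibility of the permutation $\partial$ with $(-)^\dagger$, which I would justify once and reuse; after that, everything else is formal symmetry. Alternatively, one can mirror the proof of the $Y$-case verbatim, replacing ladders by chutes and the two local rules by their transposes, which would produce exactly the same map $X^+_i$; this gives an independent, self-contained construction should one prefer not to invoke the adjoint.
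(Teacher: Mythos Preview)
Your proposal is correct and matches the paper's approach exactly: the paper also defines $X^+_i\bfP := (Y^+_i(\bfP^\dagger))^\dagger$ and remarks that mirroring the $Y^+_j$ construction with chutes in place of ladders gives the same map, recording the proposition without further proof. Your write-up is in fact more detailed than the paper's, which treats the statement as an immediate consequence of symmetry.
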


Similarly, we obtain operators $X^\pm_i$, $X^\pm_{\geq i}$, and $X^\pm$ acting on both the space $\bbZ\SPD$ and the polynomial ring $\bbZ[\beta][x;y]$.
We will speak of \textit{step $i$} of $X^+\bfP$ when referencing the link $X^+_{\geq i}\bfP \overset{X^+_i}\longrightarrow X^+_{\geq i+1}\bfP$ in the chain \[
\begin{tikzcd}
    \bfP = X^+_{\geq M+1}\bfP \arrow[r, "X^+_M"]
    & X^+_{\geq M}\bfP \arrow[r,"X^+_{M-1}"]
    & \cdots \arrow[r,"Y^+_{m+1}"]
    & X^+_{\geq m} \bfP = X^+\bfP
\end{tikzcd}.
\]
An example is given in Figure \ref{fig:flow duality}.

We next collect some immediate properties of the flow operators $X^+$ and $Y^+$. 
To help with this, define a \textit{shift} automorphism $\sigma$ first on $\PD$ by \[
    \sigma(P) := \{(i+1,j-1) \mid (i,j)\in P\}
\] and then on $\SPD$ by $\sigma(\bfP) := (\sigma(P_x),\sigma(P_y))$.
Notice that $\sigma(\bfP^\dagger) = (\sigma^{-1}(\bfP))^\dagger$.
We also let $\sigma$ act on $\bbZ[\beta][x;y]$ by $x_i\mapsto x_{i+1}$ and $y_j\mapsto y_{j-1}$ so that $\wt(\sigma(\bfP)) = \sigma(\wt(\bfP))$.

\begin{proposition}
\label{prop:flow properties}
Let $\bfP\in \SPD$.
\begin{itemize}
\item[(i)]
If $P_y = \varnothing$, then $X^+\bfP = \sigma(\bfP)$ and $Y^+\bfP = \bfP$. \\
If $P_x = \varnothing$, then $Y^+\bfP = \sigma^{-1}(\bfP)$ and $X^+\bfP = \bfP$.

\item[(ii)]
$X^+(\sigma(\bfP)) = \sigma(X^+\bfP)$
\quad and \quad
$Y^+(\sigma(\bfP)) = \sigma(Y^+\bfP)$

\item[(iii)]
$X^+(\bfP^\dagger) = (Y^+\bfP)^\dagger$
\quad and \quad
$Y^+(\bfP^\dagger) = (X^+\bfP)^\dagger$.
\end{itemize}
\end{proposition}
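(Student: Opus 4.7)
I would prove the three parts in the order (iii), (ii), (i), from most formal to most computational. Claim (iii) is a direct consequence of the definition $X^+_i\bfP = (Y^+_i(\bfP^\dagger))^\dagger$, which reads as $X^+_i = \dagger\circ Y^+_i\circ\dagger$ as operators on $\SPD$. Composing over all $i\in\bbZ$ yields $X^+ = \dagger\circ Y^+\circ\dagger$, from which both halves of (iii) follow by substituting $\bfP^\dagger$ for $\bfP$ and using $\dagger^2 = \id$.

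For (ii), I would first establish the stepwise identity $Y^+_j\circ\sigma = \sigma\circ Y^+_{j+1}$ for each $j\in\bbZ$ and then compose. The stepwise identity reflects the fact that the lozenge/blacklozenge construction for $Y^+_j$ depends only on the relative configuration of checkers in columns $j$ and $j+1$: the position of the lowest red, the first empty row above it, and the checkers in the two adjacent rows. Since $\sigma$ acts as a uniform translation $(i,j)\mapsto(i+1,j-1)$ that sends column $j+1$ of $\bfP$ onto column $j$ of $\sigma\bfP$ while preserving row order, the ladder construction transports under $\sigma$ exactly as claimed. Composing over all $j$ (only finitely many $Y^+_j$ act nontrivially on any given pipe dream) gives $Y^+\sigma = \sigma Y^+$, and the analogous identity for $X^+$ follows either by a parallel argument or from (iii) combined with the identity $\sigma(\bfP^\dagger) = (\sigma^{-1}\bfP)^\dagger$ noted in the text.

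Two of the four parts of (i) are immediate: if $P_y = \varnothing$ then each $Y^+_j$ acts as the identity since there are no red checkers to flow, so $Y^+\bfP = \bfP$, and dually $X^+\bfP = \bfP$ when $P_x = \varnothing$. By (iii) the remaining two parts are equivalent, so it suffices to prove $Y^+\bfP = \sigma^{-1}\bfP$ whenever $P_x = \varnothing$. I would argue this by directly analyzing $Y^+_j$ on an all-red super pipe dream, a property preserved by each intermediate $Y^+_k$. In this monochromatic setting every invocation of $Y^+_j$ falls into the lozenge case: if $(i,j)$ is the lowest red in column $j$ and $i' < i$ is the largest index with no red at $(i',j)$, then rows $i'+1,\dots,i$ of column $j$ are fully occupied by reds while the corresponding rows of column $j+1$ are empty. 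Unpacking the lozenge modification, the block $\{(r,j) : i'+1\leq r\leq i\}$ is replaced by $\{(r,j+1) : i'\leq r\leq i-1\}$, which as a set of positions is exactly its image under $\sigma^{-1}$. Iterating on the higher blocks of reds in column $j$, and using that $Y^+_k$ leaves column $j$ fixed for $k > j$ and column $j+1$ fixed for $k < j$, shows that every red of $\bfP$ is ultimately translated by $\sigma^{-1}$.

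The main obstacle is the monochromatic analysis inside (i): one must confirm that the successive lozenge modifications within a single column cohere into a uniform diagonal shift (note that individual checkers within a block move very differently—the bottom red jumps diagonally, while the intermediate reds merely shift right by one—yet the underlying set of positions translates uniformly), and that subsequent $Y^+_k$ steps do not disturb already-processed columns. Everything else is a formal consequence of the definitions and of the shift-equivariance of the local construction.
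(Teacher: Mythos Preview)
Your proposal is correct. The paper actually states this proposition without proof, treating all three parts as immediate consequences of the definitions; your argument supplies exactly the routine verifications one would expect (the conjugation identity $X^+_i = \dagger\, Y^+_i\, \dagger$ for (iii), the shift-equivariance $Y^+_j\sigma = \sigma Y^+_{j+1}$ for (ii), and the block-by-block analysis of the all-red lozenge case for (i)), so there is nothing to compare against.
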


\subsection{Flowing for reduced pipe dreams}
\label{subsect:reduced flowing}

There is a simpler description of the operator $Y^+_j$ for reduced pipe dreams.
First, define an auxiliary operator $Y'_j$ acting reduced $\bfP\in \SPD_0$. 
Let $(i,j)$ denote the highest red checker in column $j$ of $\bfP$ (if none exist, set $Y'_j\bfP := \bfP$).
Form a $k$-ladder $L$ with SW corner $(i,j)$ and modify $\bfP$ in $L$ as follows:
\begin{itemize}
    \item[$\lozenge$]
    If $(i,j+1)$ has no checkers, let $(i',j)$ be the first tile above $(i,j)$ with no checkers and take $L$ to be the big $k$-ladder with SW and NE corners $(i,j)$ and $(i',j+1)$, respectively.
    
    Modify $L$ by first moving the red checker from $(i,j)$ to $(i',j+1)$.
    Then for each of the $k-2$ intermediate rows of $L$, if this row only has one checker (a black checker appearing on the left), move this checker over to the right.
    
    \item[$\blacklozenge$]
    If $(i,j+1)$ has a checker, take $L$ to be the $1$-ladder $\{(i,j),(i,j+1)\}$.
    
    Modify $\bfP$ in $L$ by swapping the red checker at $(i,j)$ with the checker at $(i,j+1)$.
\end{itemize}

Now, given $\bfP\in \SPD_0$ with no red checkers in column $j+1$, we can obtain $Y^+_j\bfP$ as by iteratively applying $Y'_j$ to $\bfP$.
More precisely, $Y^+_j\bfP = (Y'_j)^k \bfP$ where $k$ is chosen to be at least the number of red checkers in column $j$ of $\bfP$.
This gives a factorization of $Y^+_j$ into a series of steps which only moves one red checker at a time.

To see why this works, suppose $Y^+_j \bfP$ is obtained from $\bfP$ through modifications in ladders $L_1,\dots,L_r$ with $L_1$ being the highest ladder and $L_r$ being the lowest.
Let $k_i$ denote the number of red checkers in $L_i$.
Then by induction on $0\leq s\leq r$, $(Y'_i)^{k_1+\dots+k_s}\bfP$ is the pipe dream obtained by modifying ladders $L_1,\dots,L_s$ as in the definition of $Y^+_j$.
Figure \ref{fig:column flow in ticks} illustrates an example of the case when $L_s$ is a big ladder, with the case where $L_s$ is a $1$-ladder being clear.

\begin{figure}
\begin{tikzpicture}[scale=0.42]
    % DIAGRAM 1
    \begin{shift}
    \drawRCGraph{9}{2}{
        0,0,
        1,0,
        2,1,
        2,0,
        1,1,
        1,0,
        2,1,
        1,1,
        2,0
    }
    \draw[->] (2.75,-5) -- (4.25,-5)
    node[midway,above] {$Y_j'$};
    \end{shift}

    % DIAGRAM 2
    \begin{shift}[(4,0)]
    \drawRCGraph{9}{2}{
        0,0,
        1,0,
        1,2,
        2,0,
        1,1,
        1,0,
        2,1,
        1,1,
        2,0
    }
    \draw[->] (2.75,-5) -- (4.25,-5)
    node[midway,above] {$Y_j'$};
    \end{shift}

    % DIAGRAM 3
    \begin{shift}[(8,0)]
    \drawRCGraph{9}{2}{
        0,2,
        0,1,
        1,2,
        0,0,
        1,1,
        1,0,
        2,1,
        1,1,
        2,0
    }
    \draw[->] (2.75,-5) -- (4.25,-5)
    node[midway,above] {$Y_j'$};
    \end{shift}

    % DIAGRAM 4
    \begin{shift}[(12,0)]
    \drawRCGraph{9}{2}{
        0,2,
        0,1,
        1,2,
        0,0,
        1,1,
        1,0,
        1,2,
        1,1,
        2,0
    }
    \draw[->] (2.75,-5) -- (4.25,-5)
    node[midway,above] {$Y_j'$};
    \end{shift}
    
    % DIAGRAM 5
    \begin{shift}[(16,0)]\drawRCGraph{9}{2}{
        0,2,
        0,1,
        1,2,
        0,2,
        1,1,
        0,1,
        1,2,
        1,1,
        0,0
    }
    \end{shift}
    
\end{tikzpicture}

\caption{
    A factorization of the modification done to the first ladder in Figure \ref{fig:column flow} as successive applications of $Y'_j$.
}
\label{fig:column flow in ticks}
\end{figure}
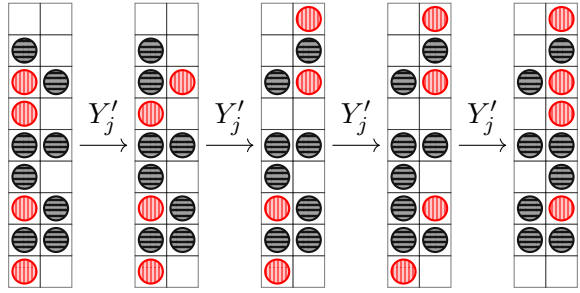

\section{Symmetry theorem for flowing}
\label{sect:flow symmetry}

Notice that $X^+$ and $\sigma Y^+$ both act on the polynomial ring $\bbZ[\beta][x;y]$ the evaluations $x_i\mapsto x_{i+1}$.
It follows that for $\bfP\in \SPD$, the super pipe dreams $X^+\bfP$ and $\sigma(Y^+\bfP)$ have the same monomial weight, which we can witness by comparing Figures \ref{fig:flow example} and \ref{fig:flow duality}.
Indeed, a much stronger statement can be said for our example: \textit{equality holds on the level of pipe dreams.}
Surprisingly, this turns out to be the case in general.

\begin{proposition}
\label{prop:flow symmetry}
Let $\bfP\in \SPD$.
Then $\sigma(Y^+(\bfP)) = X^+(\bfP)$.
\end{proposition}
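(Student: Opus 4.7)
The plan is to prove $\sigma(Y^+\bfP) = X^+\bfP$ by induction on the number of red checkers in $\bfP$. For the base case $P_y = \varnothing$, Proposition~\ref{prop:flow properties}(i) immediately gives $Y^+\bfP = \bfP$ and $X^+\bfP = \sigma(\bfP)$, so both sides agree. Before starting the induction I would first verify that the two sides share a common ``coarse structure'': both preserve the permutation $\partial(\bfP)$, both have the same number of red checkers in each column as $\bfP$, and both have the same number of black checkers in each row as $\sigma(\bfP)$. These invariants follow directly from Proposition~\ref{prop:flow properties} together with the defining properties of $X^+_i$ and $Y^+_j$, so only the precise positions of the checkers need to be reconciled.

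For the inductive step I would factor $Y^+_j$ as $(Y'_j)^{k_j}$ as in Section~\ref{subsect:reduced flowing} (suitably extended to non-reduced super pipe dreams), and symmetrically factor each $X^+_i$ into single-black-checker chute moves $X'_i$ using the transpose--complement symmetry from Proposition~\ref{prop:flow properties}(iii). This refines both $\sigma Y^+\bfP$ and $X^+\bfP$ into sequences of elementary local moves: a $Y'_j$-step either pushes a single red checker up-right through a big ladder (with intermediate-row blacks sliding right) or performs a 1-ladder swap of a red--black pair, and each $X'_i$-step is the analogous chute operation on a single black checker. The induction then reduces to a local correspondence: a single $Y'_j$-step on the $Y$-side should correspond, after $\sigma$, to a specific sequence of $X'_i$-steps on the $X$-side. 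As an alternative route, one could peel off the topmost red checker in the leftmost red column of $\bfP$, handle that checker in isolation, and then commute the remaining moves to invoke the inductive hypothesis on the reduced pipe dream.

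The main obstacle I anticipate is the asymmetry in processing order: $Y^+$ sweeps columns from right to left, while $X^+$ sweeps rows from bottom to top, so the intermediate pipe dreams along the two sequences are not $\sigma$-related in any obvious way. Bridging this likely requires a running invariant of the form: after applying all $Y^+_k$ with $k > j$ on one side and all $X^+_\ell$ with $\ell > i$ on the other (for suitably paired $i$ and $j$), the portions of the two partial results lying east of column $j$ and south of row $i$, respectively, are $\sigma$-related. Verifying this invariant across the big-ladder operations, where several checkers shift simultaneously, is the technical heart of the proof. The small examples I have hand-checked suggest that the right pairing is governed by the position of the topmost red checker currently in column $j$, which serves as a frontier between the ``already-reconciled'' and ``still-to-process'' regions of the pipe dream.
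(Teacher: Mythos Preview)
Your proposal is a plan rather than a proof, and the gap is at its core: the ``local correspondence'' you invoke between a single $Y'_j$-step and some sequence of $X'_i$-steps is precisely the content of the proposition, and you have not established it. The induction on the number of red checkers is also not set up to succeed: removing a red checker from $\bfP$ can drastically alter the ladder and chute structure seen during both $Y^+$ and $X^+$ (ladders collapse, new empty positions appear), so there is no evident way to relate $Y^+$ of the smaller pipe dream to $Y^+\bfP$, and the ``commute the remaining moves'' step has no mechanism behind it. A further issue is that the factorization $Y^+_j = (Y'_j)^k$ from Section~\ref{subsect:reduced flowing} is stated and argued only for reduced $\bfP\in\SPD_0$; in the non-reduced case a position may carry both colors, and neither the $\lozenge$ nor the $\blacklozenge$ rule for $Y'_j$ is formulated to handle that, so your parenthetical ``suitably extended'' hides a real definition problem.

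The paper's approach is quite different and avoids these traps. It does not induct on the number of checkers at all. Instead it shows directly that a red checker at $(i,j+1)$ in $Y^+\bfP$ forces a red checker at $(i+1,j)$ in $X^+\bfP$; since the total number of red checkers is preserved, this gives $(X^+\bfP)_y = \sigma((Y^+\bfP)_y)$, and the adjoint symmetry of Proposition~\ref{prop:flow properties}(iii) then transfers the conclusion to the black checkers. The substance is in Lemmas~\ref{lem:SW corner of big ladder} and~\ref{lem:not in big ladder}, which track---by induction on positions $(i,j)$ in a linear extension of the northeast partial order on $\bbZ^2$---how the ladder containing $(i,j)$ during step $j$ of $Y^+\bfP$ constrains the chute containing $(i,j)$ during step $i$ of $X^+\bfP$. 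That position-by-position bookkeeping is the rigorous replacement for your conjectured ``running invariant'', and it is where the actual difficulty of the proposition lives.
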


\noindent
\textit{Remark:}\quad
It is not true in general that if $\bfP$ and $\bfQ$ are super pipe dreams with $\wt(\bfP) = \wt(\bfQ)$, then $\bfP = \bfQ$.
See Figure \ref{fig:same weight, different pipes} for a counter-example.
\vspace{4pt}

\begin{figure}
\begin{tikzpicture}[scale=0.45]
    \begin{shift}
    \draw[very thick] (0.5,-2.5) -- (0.5,-0.5) -- (2.5,-0.5);
    \drawRCGraph{2}{2}{
        2,1,
        1,2
    }
    \drawNorthLabels{0}{1}{1,2}
    \drawWestLabels {1}{0}{1,2}
    \end{shift}

    \begin{shift}[(6,0)]
    \draw[very thick] (0.5,-2.5) -- (0.5,-0.5) -- (2.5,-0.5);
    \drawRCGraph{2}{2}{
        1,2,
        2,1
    }
    \drawNorthLabels{0}{1}{1,2}
    \drawWestLabels {1}{0}{1,2}
    \end{shift}
\end{tikzpicture}
\caption{
    Two super pipe dreams with weight $x_1x_2y_1y_2$.
    The underlying pipe dream of each is the same.
}
\label{fig:same weight, different pipes}
\end{figure}
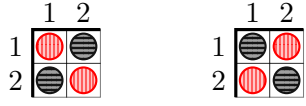

To prove Proposition \ref{prop:flow symmetry}, we will need the help of two technical lemmas.

\begin{lemma}
\label{lem:SW corner of big ladder}
    Let $\bfP\in \SPD(w)$.
    Suppose $(i,j)$ is the SW corner of a big ladder $L$ during step $j$ of $Y^+\bfP$.
    Let $(i_0,j)$ (resp. $(i,j_0)$) denote the first tile north (resp. east) of $(i,j)$ with no checkers in $\bfP$ (so $(i_0,j+1)$ is the NE corner of $L$).
    
    \begin{itemize}
    \item[(i)]
    For each $j<j'< j_0$, $(i,j')$ has no checkers in $Y^+_{\geq j'} \bfP$ and, hence, appears in a big ladder $L'$ during step $j'$ of $Y^+\bfP$.
    
    \item[(ii)]
    If $(i,j)$ appears in a big chute $C$ during step $i$ of $X^+\bfP$, then $(i,j)$ is the NE corner of $C$.
    
    \item[(iii)]
    For each $i_0 < i'\leq i$, $(i',j)$ has a red checker in $X^+_{\geq i'} \bfP$.
    \end{itemize}
\end{lemma}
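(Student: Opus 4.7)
The plan is to prove the three parts together, using induction for (i) and (iii) and a case analysis leveraging both for (ii).

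For part (i), I would induct on $j'$ from $j+1$ up to $j_0-1$. The base case $j' = j+1$ is immediate from the hypothesis that $L$ is big, which forces $(i, j+1)$ to be empty in $Y^+_{\geq j+1}\bfP$. For the inductive step, note that $(i, j')$ has a checker in $\bfP$ (since row $i$ of $\bfP$ is filled from column $j$ through $j_0 - 1$ by the definition of $j_0$), while column $j'$ is untouched by all steps preceding step $j'$ in the computation of $Y^+\bfP$. Thus step $j'$ is the unique step capable of emptying $(i, j')$, and this emptying must happen within a big ladder $L'$: a 1-ladder swap at $(i, j')$ cannot produce emptiness, since 1-ladders require $(i, j'+1)$ to hold a checker which simply swaps into $(i, j')$. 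In either subcase of the big-ladder modification (SW corner or intermediate row), the construction forces $(i, j'+1)$ to be empty in $Y^+_{\geq j'+1}\bfP$, closing the induction.

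Part (iii) proceeds by an analogous descending induction on $i'$ from $i$ down to $i_0 + 1$. Column $j$ of $\bfP$ is filled from row $i_0 + 1$ through $i$ by the definition of $i_0$, and $(i, j)$ carries a red checker in $\bfP$ since it is the SW corner of $L$. A case analysis of step $i$ of $X^+$ shows that the red at $(i, j)$ is preserved in $X^+_{\geq i}\bfP$ (via the dual observation that 1-chute swaps cannot remove a red that is not the rightmost black being processed), and the inductive step repeats the argument one row higher, using the filled-column structure to exclude the 1-chute swaps that would otherwise displace the red.

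For part (ii), I would argue by contradiction. Suppose $(i, j)$ lies in a big chute $C$ during step $i$ of $X^+\bfP$ but is not the NE corner. Since $(i, j)$ has a red in $\bfP$ and row $i$ is unmodified before step $i$ of $X^+$, the position $(i, j)$ cannot be the NW corner of $C$ (which must be empty). So $(i, j)$ is an intermediate column, with NE corner $(i, c)$ for some $c > j$. The empty $(i, j_0)$ in $\bfP$ forces $j < c < j_0$: otherwise the chute's NW corner would sit at a column $\geq j_0 > j$, missing $(i, j)$. Applying part (i) at $j' = c$, $(i, c)$ is empty in $Y^+_{\geq c}\bfP$ and lies in a big ladder $L_c$ at step $c$. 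But $(i, c)$ also has a black in $\bfP$ (being the NE of $C$), so a SW-corner role in $L_c$ would leave that black behind, contradicting emptiness; hence $(i, c)$ is an intermediate row of $L_c$, which forces $(i+1, c)$ to lie in $L_c$ and thus carry a checker in $\bfP$. Combining this with part (iii) and a parallel analysis for step $i+1$ of $X^+\bfP$, we conclude that $(i+1, c)$ cannot be empty in $X^+_{\geq i+1}\bfP$, contradicting the big-chute requirement that the SE corner $(i+1, c)$ be empty at the time $C$ is processed.

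The main obstacle will be the last step of part (ii): rigorously propagating the non-emptiness of $(i+1, c)$ in $\bfP$ through step $i+1$ of $X^+\bfP$. This requires a careful case analysis of how step $i+1$ could potentially modify $(i+1, c)$ (as NE of a chute, intermediate column, or otherwise), using the filled structure of $\bfP$ around $(i+1, c)$ enforced by part (i) applied at column $c$, together with part (iii)'s description of column $j$, to exclude every mechanism by which $(i+1, c)$ could become empty.
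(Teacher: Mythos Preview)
Your argument has a genuine circular gap between parts (ii) and (iii).

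For the base case of (iii) ($i'=i$), you need the red checker at $(i,j)$ in $\bfP$ to survive step~$i$ of $X^+$. Your parenthetical only addresses $1$-chutes. The dangerous case is when $(i,j)$ sits in a \emph{middle column} of a big chute at step~$i$: then the contents of $(i,j)$ and $(i+1,j)$ swap, and since row $i+1$ of $X^+_{\geq i+1}\bfP$ contains no black checkers, $(i+1,j)$ may well be empty, erasing the red at $(i,j)$. Excluding this middle-column scenario is exactly the content of (ii). So (iii) at $(i,j)$ needs (ii) at $(i,j)$.

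Conversely, your last step in (ii) needs $(i+1,c)$ to carry a checker in $X^+_{\geq i+1}\bfP$. You correctly extract from (i) that $(i,c)$ is an interior row of some big ladder $L_c$ at step~$c$, hence $L_c$ has a SW corner $(i_1,c)$ with $i_1 > i$. But the tools you propose --- (i) at column~$c$ and (iii) at column~$j$ --- do not control $(i+1,c)$ through step~$i+1$ of $X^+$. What actually works is (iii) \emph{applied at the new position $(i_1,c)$}, with $i' = i+1$: this forces a red at $(i+1,c)$ in $X^+_{\geq i+1}\bfP$, contradicting the empty SE corner of~$C$. But $(i_1,c)$ is strictly southeast of $(i,j)$, so invoking (iii) there requires an outer induction on positions that you have not set up.

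The paper resolves this by proving (ii) via induction on positions $(i,j)$ satisfying the lemma's hypotheses, ordered by any linear extension of the northwest partial order; at each position it shows (ii) implies (iii), and then uses (iii) at already-handled southeast positions to establish (ii) at the current one. Your linear scheme (prove (iii) fully, then (ii)) cannot close without this joint position induction.
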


\begin{proof}
We first show (i) by induction on $j< j'< j_0$.
If $j' = j+1$, then $(i,j+1)$ has no checkers in $Y^+_{\geq j+1} \bfP$ since it is the SE corner of the big ladder $L$ during step $j$ of $Y^+\bfP$.
If $j' > j+1$, then by induction $(i,j'-1)$ has no checkers in $Y^+_{\geq j'-1}$ and appears in a big ladder $L'$ during this step; these conditions imply that $(i,j')$ must not have any checkers in $Y^+_{\geq j'}\bfP$.
In any case, $(i,j')$ has checkers in $\bfP$ but not in $Y^+\bfP$, so it must appear in big ladder during step $j+1$ of $Y^+\bfP$.
This completes (i).

We will next show that (iii) is a consequence of (ii), so assume that (ii) holds.
We proceed by downwards induction on $i\geq i'> i_0$.
If $i' = i$, notice that $(i,j)$ being the SW corner of a big ladder $L$ during step $j$ of $Y^+\bfP$ means that $(i,j)$ has a red checker in $\bfP$.
Hence the only way for $(i,j)$ to not contain a red checker in $X^+_{\geq i}$ is if it is contained in a middle column of a big chute during step $i$ of $X^+\bfP$, but this cannot happen by (ii).

Now, let $i_0<i'<i$ and assume there is a red checker at $(i'+1,j)$ in $X^+_{\geq i'+1}\bfP$.
There is also is a checker at $(i',j)$ in $\bfP$ and, hence, in $X^+_{\geq i'+1}\bfP$ since $L$ is a big ladder.
If $(i',j)$ is contained in a chute during step $i'$ of $X^+\bfP$, then the checkers at $(i',j)$ and $(i'+1,j)$ in $X^+_{\geq i'+1}\bfP$ get swapped in $X^+_{\geq i'}\bfP$, so $(i',j)$ has a red checker in $X^+_{\geq i'}\bfP$.
Otherwise $(i',j)$ is not contained in a chute during step $i'$ of $X^+\bfP$, in which case $(i',j)$ must already have a red checker in $\bfP$ and will continue to have this red checker in $X^+_{\geq i'}\bfP$.
This completes the proof that (ii) implies (iii).

It remains to show (ii).
Before doing so, pick any linear extension of the partial order on $\bbZ^2$ where $(i,j)\geq (i',j')$ if and only if $(i,j)$ is weakly northwest of $(i',j')$.
We will show (ii) by induction on the positions $(i,j)$ satisfying the hypotheses of the lemma using the above order.

Suppose $(i,j)$ is contained in a big chute $C$ during step $i$ of $X^+\bfP$.
Let $(i,j_1)$ denote the NE corner of $C$, so $j\leq j_1< j_0$ and $(i,j_1)$ has a black checker in $\bfP$.
Our goal is to show that $j_1=j$.

If instead $j_1> j$, then by (i) $(i,j_1)$ is contained in a big ladder $L'$ during step $j_1$ of $Y^+\bfP$ (note that no such $L'$ can exist for the base case of the induction since its SE corner would be southeast of $(i,j)$, so in this case $j_1 = j$).
Write $(i_1,j_1)\in P_y$ denote the SW corner of $L'$.
We cannot have $i_1 = i$ since otherwise $(i,j_1)$ would still have a black checker in $Y^+_{\geq j_1}\bfP$ in contradiction to (i), so $i_1 > i$.

By the induction hypothesis, $(i_1,j_1)$ satisfies both (ii) and, hence, (iii).
In particular, $(i+1,j_1)$ has a red checker in $X^+_{\geq i+1}\bfP$.
On the other hand, we know $(i+1,j_1)$ has no checker in $X^+_{\geq i+1}\bfP$, since this is the SE corner of the big chute $C$ during step $i$ of $X^+\bfP$.
This gives us the desired contradiction, so we must have $j_1=j$.
This completes the induction for (ii).
\end{proof}

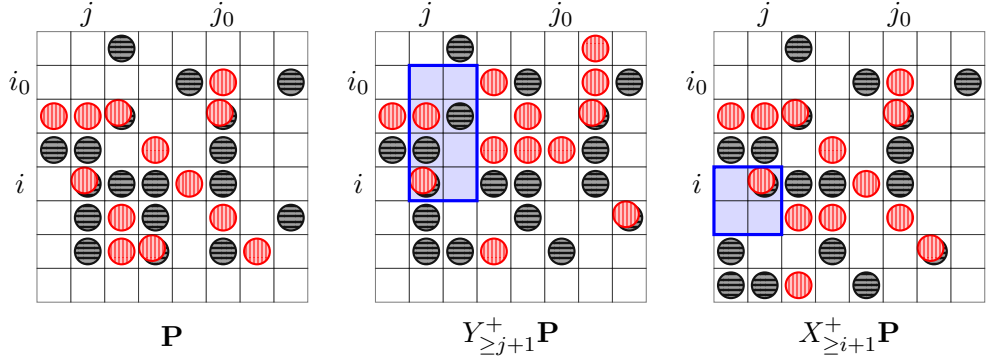
\begin{figure}
\begin{tikzpicture}[scale=0.45]
    \begin{shift}
    \drawRCGraph{8}{8}{
        0,0,1,0,0,0,0,0,
        0,0,0,0,1,2,0,1,
        2,2,3,0,0,3,0,0,
        1,1,0,2,0,1,0,0,
        0,3,1,1,2,1,0,0,
        0,1,2,1,0,2,0,1,
        0,1,2,3,0,1,2,0,
        0,0,0,0,0,0,0,0
    }
    \drawWestLabels{1}{0}{,$i_0$,,,$i$,}
    \drawNorthLabels{0}{1}{,$j$,,,,$j_0$,}
    \node at (4.5,-9.5) {$\bfP$};
    \end{shift}

    \begin{shift}[(10,0)]
    \fill[blue,opacity=0.15] (1.5,-1.5) rectangle ++(2,-4);
    \drawRCGraph{8}{8}{
        0,0,1,0,0,0,2,0,
        0,0,0,2,1,0,2,1,
        2,2,1,0,2,0,3,0,
        1,1,0,2,2,2,1,0,
        0,3,0,1,1,0,1,0,
        0,1,0,0,1,0,0,3,
        0,1,1,2,0,1,0,0,
        0,0,0,0,0,0,0,0
    }
    \drawWestLabels{1}{0}{,$i_0$,,,$i$,}
    \drawNorthLabels{0}{1}{,$j$,,,,$j_0$,,}
    \node at (4.5,-9.5) {$Y^+_{\geq j+1}\bfP$};
    \draw[blue,very thick] (1.5,-1.5) rectangle ++(2,-4);
    \end{shift}

    \begin{shift}[(20,0)]
    \fill[blue,opacity=0.15] (0.5,-4.5) rectangle ++(2,-2);
    \drawRCGraph{8}{8}{
        0,0,1,0,0,0,0,0,
        0,0,0,0,1,2,0,1,
        2,2,3,0,0,3,0,0,
        1,1,0,2,0,1,0,0,
        0,3,1,1,2,1,0,0,
        0,0,2,2,0,2,0,0,
        1,0,0,1,0,0,3,0,
        1,1,2,0,1,0,0,0
    }
    \drawWestLabels{1}{0}{,$i_0$,,,$i$,}
    \drawNorthLabels{0}{1}{,$j$,,,,$j_0$,,}
    \node at (4.5,-9.5) {$X^+_{\geq i+1}\bfP$};
    \draw[blue,very thick] (0.5,-4.5) rectangle ++(2,-2);
    \end{shift}
\end{tikzpicture}

\caption{
    Illustration of Lemma \ref{lem:SW corner of big ladder}.
    The shaded region in the second (resp. third) diagram is the ladder (resp. chute) containing $(i,j)$ during step $j$ of $Y^+\bfP$ (resp. step $i$ of $X^+\bfP$).
}
\label{fig:SW corner of big ladder}
\end{figure}

\begin{lemma}
\label{lem:not in big ladder}
Let $\bfP \in \SPD(w)$.
Suppose there is a checker at some position $(i,j)$ in $\bfP$ which is not contained in a big ladder during step $j$ of $Y^+\bfP$.
Then there is no checker at $(i+1,j)$ in $X^+_{\geq i+1}\bfP$, and $(i,j)$ is contained in a big chute $C$ during step $i$ of $X^+\bfP$.
If there is a red checker at $(i,j)$ in $\bfP$, then $(i,j)$ is not the NE corner of $C$.
\end{lemma}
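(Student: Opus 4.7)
My plan is to prove (1)--(3) together by a careful local analysis of $\bfP$ near $(i,j)$, carried out by joint induction with Lemma~\ref{lem:SW corner of big ladder} along the ``weakly northwest'' partial order on $\bbZ^2$. The first step is to decode the hypothesis. Starting at $(i,j)$, walk downward in column $j$ through consecutive non-empty tiles and let $(i^*,j)$ (with $i^* \geq i$) be the lowest red in this unbroken block, allowing $i^* = i$ when $(i,j) \in P_y$. If no such $(i^*,j)$ exists then $(i,j)$ is a black lying above every red in column $j$ and the analysis reduces to an easier degenerate case; otherwise $(i^*,j)$ would initiate a big ladder absorbing $(i,j)$, unless $(i^*,j+1)$ holds a checker in $Y^+_{\geq j+1}\bfP$, in which case $(i^*,j)$ is the SW corner of a $1$-ladder. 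The hypothesis therefore forces this checker to exist, and Lemma~\ref{lem:SW corner of big ladder} applied inductively to positions south or east of $(i,j)$ controls any intervening big ladders between $(i,j)$ and $(i^*,j)$.

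\textbf{Deducing (1)--(3).} Rows $\leq i$ of $X^+_{\geq i+1}\bfP$ agree with $\bfP$, and row $i+1$ loses every black checker under $X^+_{i+1}$, so claim (1) reduces to showing no red remains at $(i+1,j)$ in $X^+_{\geq i+1}\bfP$. I would argue that the presence of such a red would force $(i,j)$ back into a big ladder during step $j$ of $Y^+\bfP$, contradicting the hypothesis; the key tracking uses Lemma~\ref{lem:SW corner of big ladder}(iii) applied to a position strictly south of $(i,j)$ in order to locate the big chute at step $i+1$ of $X^+\bfP$ that sweeps the red down to $(i+2,j)$. With (1) established, I build the big chute $C$ at step $i$ of $X^+\bfP$ directly: its NE corner is the first black east of $(i,j)$ in row $i$ of $X^+_{\geq i+1}\bfP$ whose south neighbor is empty (such a position exists by iterating (1) and the joint induction east of $(i,j)$), and its NW corner lies strictly west of column $j$ because $(i,j)$ is non-empty, which places $(i,j)$ inside the top row of $C$ and yields (2). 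For (3), when $(i,j) \in P_y$, the checker at $(i,j+1)$ in $Y^+_{\geq j+1}\bfP$ supplied by the decoding step corresponds---via the mutual induction---to a non-empty position in row $i$ strictly east of $(i,j)$ in $X^+_{\geq i+1}\bfP$, so $(i,j)$ cannot be the rightmost black of $C$'s top row, i.e., it cannot be the NE corner.

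\textbf{Main obstacle.} The hard part is claim (1), specifically locating the big chute at step $i+1$ of $X^+\bfP$ that swallows the hypothetical red at $(i+1,j)$. Reconciling the structural data supplied by the decoding step (which concerns columns $j$ and $j+1$ of $\bfP$) with the chute geometry of $X^+$ (which concerns rows $i+1$ and $i+2$ of intermediate super pipe dreams) is delicate, and it is precisely where the joint inductive coupling with Lemma~\ref{lem:SW corner of big ladder}---in particular its part (iii)---becomes essential. Once this step is handled, (2) and (3) follow cleanly from the explicit construction of $C$.
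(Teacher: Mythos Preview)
Your framework (induction along the weakly--northwest partial order, and the eastward walk to locate the NE corner of the chute for parts (2)--(3)) matches the paper's, and your treatment of (2)--(3) is essentially the same as what the paper does with its index $j_1$.

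The gap is in your plan for claim~(1). You propose to invoke Lemma~\ref{lem:SW corner of big ladder}(iii) at ``a position strictly south of $(i,j)$'', but that lemma's hypothesis requires the position in question to be the \emph{SW corner of a big ladder} during step~$j$ of $Y^+\bfP$. Your own decoding step shows the opposite: every red in the unbroken block at or below $(i,j)$ initiates a $1$-ladder (otherwise $(i,j)$ would lie in a big ladder), so there is no candidate position to which Lemma~\ref{lem:SW corner of big ladder} applies. The ``joint induction'' with Lemma~\ref{lem:SW corner of big ladder} is therefore idle here, and the contradiction you sketch (``the presence of such a red would force $(i,j)$ back into a big ladder'') has no mechanism behind it. Relatedly, the phrases ``argue by contradiction'' and ``locate the big chute at step $i+1$ that sweeps the red down to $(i+2,j)$'' are in tension: the second is a direct argument, not a contradiction.

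The missing idea, which the paper uses, is \emph{self}-induction on Lemma~\ref{lem:not in big ladder} itself. If $(i+1,j)$ carries a checker in $\bfP$, then $(i+1,j)$ cannot lie in a big ladder during step~$j$ either (any such ladder would swallow $(i,j)$ as well), so the induction hypothesis applies to $(i+1,j)$: it lies in a big chute $C'$ during step~$i+1$ of $X^+\bfP$, and if $(i+1,j)\in P_y$ then it is not the NE corner of~$C'$. In either case the contents of $(i+1,j)$ are cleared in $X^+_{\geq i+1}\bfP$. This is what replaces your appeal to Lemma~\ref{lem:SW corner of big ladder}(iii), and once you have it, your construction of $C$ for (2)--(3) goes through as written. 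No joint induction with Lemma~\ref{lem:SW corner of big ladder} is needed anywhere in the proof.
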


\begin{proof}
We again induct on those $(i,j)$ satisfying the hypotheses of the lemma using any linear extension of the northeast partial ordering on $\bbZ^2$.

We first show that there is no checker at $(i+1,j)$ in $X^+_{\geq i+1}\bfP$.
This is clear if $(i+1,j)$ is unoccupied in $\bfP$, so we may assume that there is a checker at $(i+1,j)$ in $\bfP$.
$(i+1,j)$ cannot be part of a big ladder during step $j$ of $Y^+\bfP$, since otherwise $(i,j)$ would have to be contained in this same big ladder.
% (in particular, this gives the base of the induction).

Applying the induction hypothesis to $(i+1,j)$, there is no checker at $(i+2,j)$ in $X^+_{\geq i+2}\bfP$ and $(i+1,j)$ is contained in a big chute during step $i+1$ of $X^+\bfP$.
If there is only a black checker at $(i+1,j)$ in $\bfP$ and, hence, in $X^+_{\geq i+2}\bfP$, then $(i+1,j)$ will become vacant in $X^+_{\geq i+1}\bfP$.
Otherwise there is a red checker at $(i+1,j)$ in both $\bfP$ and $X^+_{\geq i+2}\bfP$, in which case $(i+1,j)$ appears in one of the middle columns of $C$, so again $(i+1,j)$ becomes vacant in $X^+_{\geq i+1}\bfP$.

\begin{figure}
\begin{tikzpicture}[scale=0.45]
    \begin{shift}
    \drawRCGraph{8}{8}{
        1,0,0,0,1,0,0,0,
        3,2,0,3,1,3,1,0,
        0,3,1,1,0,1,2,0,
        0,1,2,1,2,3,0,1,
        0,0,3,3,0,1,0,0,
        0,2,1,0,1,2,3,1,
        0,1,1,0,3,0,1,0,
        0,2,0,0,0,0,2,0
    }
    \drawNorthLabels{0}{1}{,,$j$,,,$j_1$,,}
    \drawWestLabels{1}{0}{,,,$i$,,,,,}
    \node at (4.5,-9.5) {$\bfP$};
    \end{shift}

    \begin{shift}[(10,0)]
    \fill[blue,opacity=0.15] (2.5,-3.5) rectangle ++(2,-1);
    \drawRCGraph{8}{8}{
        1,0,0,0,3,0,2,2,
        3,2,0,1,3,0,3,1,
        0,3,1,0,1,0,1,0,
        0,1,2,1,1,2,0,1,
        0,0,3,1,0,3,0,2,
        0,2,1,0,1,1,2,3,
        0,1,1,0,1,0,0,1,
        0,2,0,0,0,0,0,0
    }
    \drawNorthLabels{0}{1}{,,$j$,,,$j_1$,,}
    \drawWestLabels{1}{0}{,,,$i$,,,,,}
    \node at (4.5,-9.5) {$Y^+_{\geq j+1}\bfP$};
    \draw[blue,very thick] (2.5,-3.5) rectangle ++(2,-1);
    \end{shift}

    \begin{shift}[(20,0)]
    \fill[blue,opacity=0.15] (0.5,-3.5) rectangle ++(6,-2);
    \drawRCGraph{8}{8}{
        1,0,0,0,1,0,0,0,
        3,2,0,3,1,3,1,0,
        0,3,1,1,0,1,2,0,
        0,1,2,1,2,3,0,1,
        0,0,0,1,0,0,0,0,
        0,3,3,0,3,0,2,0,
        1,2,0,1,1,2,3,0,
        1,1,0,1,0,0,1,0
    }
    \drawNorthLabels{0}{1}{,,$j$,,,$j_1$,,}
    \drawWestLabels{1}{0}{,,,$i$,,,,,}
    \node at (4.5,-9.5) {$X^+_{\geq i+1}\bfP$};
    \draw[blue,very thick] (0.5,-3.5) rectangle ++(6,-2);
    \end{shift}
\end{tikzpicture}

\caption{
    Illustration of Lemma \ref{lem:not in big ladder}. The shaded regions are as in Figure \ref{fig:SW corner of big ladder}.
}
\label{fig:not in big ladder}
\end{figure}
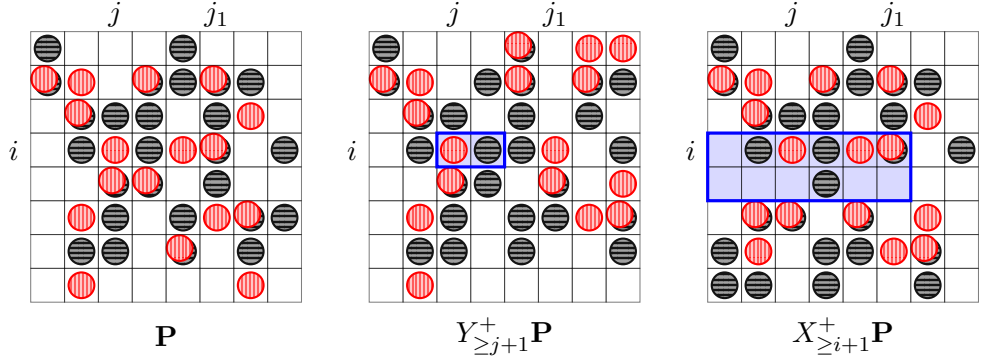

We next show that $(i,j)$ is part of a big chute during step $i$ of $X^+\bfP$.
If there is only a black checker at $(i,j)$ in $\bfP$, then $(i,j)$ is contained in some chute $C$ during step $i$ of $X^+\bfP$.
Since $(i+1,j)$ is a bump in $X^+_{\geq i+1} \bfP$, $C$ must be a big chute.

It remains to consider the case where there is a red checker at $(i,j)$ in $\bfP$.
Let $L$ be the ladder containing $(i,j)$ during step $j$ of $Y^+\bfP$.
By hypothesis, $L$ must be a $1$-ladder, so $(i,j+1)$ only has a black checker in $Y^+_{\geq j+1}\bfP$.

Let $j_1>j$ be minimal so that $(i,j_1)$ either is not contained in a ladder during step $j_1$ of $Y^+\bfP$ or appears as the SW corner of a big ladder during step $j_1$ of $Y^+\bfP$.
Then for all $j<j'<j_1$, the checkers at positions $(i,j')$ and $(i,j'+1)$ are swapped during step $j'$ of $Y^+\bfP$ (these positions either form a $1$-ladder or the middle row of a big ladder).
This means that the checkers at $(i,j_1)$ in $Y^+_{\geq j_1}\bfP$ are the same as the checkers at $(i,j+1)$ in $Y^+_{\geq j+1}\bfP$, and we saw the latter consisted of only a black checker.

We claim that $(i,j_1)$ also has a black checker in $\bfP$.
This is clear if $(i,j_1)$ is not contained in a ladder during step $j_1$ of $Y^+\bfP$, so assume that it is contained in such a ladder $L'$.
By construction, $L'$ is a big ladder with SW corner $(i,j_1)$.
Since $(i,j_1)$ has a black checker in $Y^+_{\geq j_1}\bfP$, it must have had one already in $\bfP$, so we get the claim.

In particular, $(i,j_1)$ is contained in a chute $C$ during step $i$ of $X^+\bfP$.
If $C$ is a big chute, which must be the case if $(i+1,j_1)$ is unoccupied in $X^+_{\geq i+1}\bfP$, then $(i,j)$ would also be contained in $C$ since there are checkers at each $(i,j')$ in $\bfP$ where $j\leq j'\leq j_1$.
Note that in this case, $(i,j)$ is not the NE corner of $C$.
Hence to finish the proof, it suffices to show that $(i+1,j_1)$ is unoccupied in $X^+_{\geq i+1} \bfP$.
We examine three cases (with some overlap between the first two):

\begin{itemize}
\item
If $(i+1,j_1)$ is unoccupied in $\bfP$, then it will continue to be in $X^+_{\geq i+1} \bfP$.

\item
If $(i,j_1)$ is not contained in a ladder during step $j_1$ of $Y^+\bfP$, then by the induction hypothesis $(i+1,j_1)$ will be unoccupied in $X^+_{\geq i+1} \bfP$.

\item
Suppose that there is a checker at $(i+1,j_1)$ in $\bfP$, and that $(i,j_1)$ is contained in a ladder $L'$ during step $j_1$ of $Y^+\bfP$ (which, by construction, is a big ladder with SW corner $(i,j_1)$).
Then $(i+1,j_1)$ cannot also be in a big ladder during step $j_1$ of $Y^+\bfP$, since such a ladder would overlap with $L'$.
The second paragraph of this proof verbatim with $j$ replaced by $j'$ shows that $(i+1,j_1)$ has no checkers in $X^+_{\geq i+1}\bfP$.
\qedhere
\end{itemize}
\end{proof}

\begin{proof}[Proof of Proposition \ref{prop:flow symmetry}]
We will show that if there is a red checker at $(i,j+1)$ in $Y^+\bfP$, then there is also one at $(i+1,j)$ in $X^+\bfP$.
Since $Y^+\bfP$ and $X^+\bfP$ have the same number of red checkers, it would follow that $(X^+\bfP)_y = \sigma((Y^+\bfP)_y)$.
The same would hold for the black checkers by the computation
\begin{align*}
    (X^+(\bfP^\dagger))_x
    &= ((Y^+\bfP)^\dagger)_x
    \tag{Proposition \ref{prop:flow properties}(iii)} \\
    &= ((Y^+\bfP)_y)^t
    \\
    &= \big(\sigma^{-1}((X^+\bfP)_y)\big)^t
    \tag{since $(X^+\bfP)_y = \sigma((Y^+\bfP)_y)$} \\
    &= \big((\sigma^{-1}(X^+\bfP))^\dagger\big)_x
    \\
    &= \sigma\big(((X^+\bfP)^\dagger)_x\big)
    \tag{since $\sigma(\bfP^\dagger) = (\sigma^{-1}(\bfP))^\dagger$}\\
    &= \sigma\big((Y^+(\bfP^\dagger))_x\big).
    \tag{Proposition \ref{prop:flow properties}(iii)}
\end{align*}
Now replace $\bfP^\dagger$ with $\bfP$.

Let $(i,j+1)$ be the location of a red checker in $Y^+\bfP$.
This checker was placed here during step $j$ of $Y^+\bfP$, at which point it is not moved again.
Let $L$ denote the ladder containing $(i,j+1)$ during this step.

First consider the case where $L$ is a big ladder.
Let $i_0$ and $i_1$ denote the top and bottom rows of $L$ (so $i_0\leq i< i_1$).
Lemma \ref{lem:SW corner of big ladder}(iii) (using $(i,j) = (i_1,j)$ and $i'=i+1$) tells us that there is a red checker at $(i+1,j)$ in $X^+_{\geq i+1}\bfP$.
We may assume that $(i+1,j)$ is contained in a chute $C$ during step $i$ of $X^+\bfP$, since otherwise $(i+1,j)$ will continue to have a red checker in $X^+\bfP$.
We examine two cases:
\begin{itemize}
\item 
Suppose $i=i_0$.
Then $(i,j) = (i_0,j)$ is empty in $\bfP$, since this is the NW corner of the big ladder $L$.
It follows that $C$ is a big chute with SW corner $(i+1,j)$, so there will still be a red checker at $(i+1,j)$ in $X^+_{\geq i}\bfP$ and, hence, in $X^+\bfP$.

\item
Suppose $i> i_0$.
The checkers at $(i,j)$ and $(i,j+1)$ are swapped during step $j$ of $Y^+\bfP$ (since these positions form a middle row of $L$), so $(i,j)$ has a red checker in $\bfP$.
Then $(i,j)$ and $(i+1,j)$ both have red checkers in $X^+_{\geq i+1}\bfP$, so $C$ must be a big chute with these positions occurring as a middle column.
After step $i$ of $X^+\bfP$, the checkers at $(i,j)$ and $(i+1,j)$ are thus swapped, so $(i+1,j)$ has a red checker in $X^+_{\geq i}\bfP$ and in $X^+\bfP$.
\end{itemize}

Now consider the case where $L$ is a $1$-ladder.
Then $(i,j)$ is occupied by a red checker in $\bfP$ and, hence, in $X^+_{\geq i+1}\bfP$.
Lemma \ref{lem:not in big ladder} guarantees that $(i,j)$ appears in a middle column of a big chute $C$ during step $i$ of $X^+\bfP$.
The checkers at $(i,j)$ and $(i+1,j)$ in $X^+_{\geq i+1}\bfP$ are thus swapped when passing to $X^+_{\geq i}\bfP$, so $(i+1,j)$ is a $y$-cross in $X^+_{\geq i}\bfP$ and therefore in $X^+\bfP$.
This completes the proof.
\end{proof}

\section{Pipe dream rectification and the Cauchy identities}
\label{sect:cauchy rectification}

In this section, we provide an algorithm that takes a super pipe dream $\bfW\in \SPD(w)$ and separates it into two single pipe dreams $V\in \PD(v)$ and $U\in \PD(u)$ for a decomposition $w = u^{-1}*v$.
We call this process pipe dream \textit{rectification}.
We will see that rectification preserves weights in such a way to give a bijective proof of the Cauchy identities for Schubert and Grothendieck polynomials (Theorems \ref{thm:cauchy identity} and \ref{thm:K-cauchy identity}).

\subsection{Pipe dream rectification}
\label{subsect:rectification}

We begin by giving the definition of pipe dream rectification.

\begin{proposition}
\label{prop:rectification}
Let $w\in S_\infty$.
There is a bijection \[
    \Rect\colon \SPD(w)
    \overset\sim\longrightarrow
    \coprod_{
        w = u^{-1} * v
    } \PD(v) \times \PD(u)
\] which preserves weights in the sense that if $(V,U) = \Rect(\bfW)$, then $\wt(\bfW) = \beta^{\ell(u) + \ell(v) - \ell(w)}\wt(V)\wt(U^\dagger)$.
We refer to $\Rect(\bfW)$ as the \textit{rectification} of $\bfW$.
\end{proposition}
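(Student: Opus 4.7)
My plan is to construct $\Rect$ by iterating the flow operator $Y^+$ until the red and black checkers of $\bfW$ occupy disjoint columns, and then reading off the two single pipe dreams from this ``separated'' configuration.

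Concretely, given $\bfW\in\SPD(w)$, I apply $Y^+$ to $\bfW$ some large number $n$ of times to obtain $\bfW^{(n)} := (Y^+)^n \bfW \in \SPD(w)$. Each application shifts every red checker east by exactly one column while preserving the row distribution of black checkers (properties (ii) and (iii) of $Y^+$), and in particular it preserves the permutation $w$. Because $\bfW$ has finite support, for $n$ sufficiently large every red checker of $\bfW^{(n)}$ lies in a column strictly east of every black checker, and moreover the Demazure indices $i+j-1$ of red tiles exceed those of black tiles by at least $2$. I then take $V$ to be the single pipe dream consisting of the black portion of $\bfW^{(n)}$ and $U$ to be the single pipe dream obtained from the red portion of $\bfW^{(n)}$ by the column-to-row transposition $(i,j)\mapsto (j-n,\,i)$, normalized so that the rows of $U$ agree with the red columns of $W_y$ in the original $\bfW$.

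For the weight identity, the rows of $V$ equal the rows of $W_x$ and the rows of $U$ equal the red columns of $W_y$, so the monomial parts of $\wt(V)\wt(U^\dagger)$ match $\wt(\bfW)$; the $\beta$-exponents agree by the identity $|V|+|U|=|W_x|+|W_y|$, which holds because $Y^+$ preserves tile counts. For the permutation identity, once the Demazure indices of red and black tiles of $\bfW^{(n)}$ are separated by at least $2$, the Demazure generators for the red tiles commute with those for the black tiles, so the word of $\bfW^{(n)}$ can be reordered in the Demazure algebra to place all reds before all blacks, yielding $w = \partial(\bfW^{(n)}) = u_1 * v$, where $u_1$ and $v$ are the permutations of the red and black portions. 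Transposing the red portion to form $U$ then inverts its permutation (since the transpose of a pipe dream has inverse permutation), giving $u = u_1^{-1}$ and hence $w = u^{-1}*v$. Bijectivity follows by running the procedure in reverse: given $(V,U)$ with $w = u^{-1}*v$, place $V$ and the shifted transpose of $U$ in the separated configuration and apply $Y^-$ iteratively $n$ times to recover $\bfW$.

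The main obstacle I expect is verifying the permutation identity in the non-reduced case. Although each individual $Y^+_j$ preserves the Demazure product by construction, deducing that the Demazure product of the final separated word equals $u^{-1}*v$ requires careful tracking of the $\beta$-length defects and of how the row-by-row reading order interacts with the column-to-row transposition used to define $U$. The flow symmetry of Proposition \ref{prop:flow symmetry} should be useful here, since it provides a dual description of $\Rect$ via iterated $X^+$ (pushing black south instead of red east) that treats $V$ and $U$ on equal footing and may simplify the Demazure-algebraic bookkeeping.
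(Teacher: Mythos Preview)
Your overall strategy---iterate $Y^+$ until the red and black checkers separate, then read off two single pipe dreams---is exactly the paper's approach. However, your argument for the permutation identity contains a genuine error. The claim that for large $n$ the Demazure indices $i+j-1$ of the red tiles in $\bfW^{(n)}$ exceed those of the black tiles by at least $2$ is false: each flow step moves a red checker from $(i,j)$ to some $(i',j+1)$ with $i'\le i-1$, so its diagonal index $i+j-1$ can only stay the same or decrease, and in any case both red and black indices remain confined to $\{1,\dots,n-1\}$ when $w\in S_n$. For instance, if $\bfW$ has a single black checker and a single red checker both at $(1,1)$ (so $w=s_1$), then after any number of applications of $Y^+$ both checkers still sit on diagonal $1$. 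Your commutativity argument in the Demazure algebra therefore does not go through.

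The correct observation, and the one the paper uses, is positional rather than index-based: since red columns grow without bound while black rows stay fixed and all tiles lie on diagonals $1\le i+j-1\le n-1$, for $m$ large the red checkers of $\bfW'=(Y^+)^m\bfW$ lie strictly \emph{northeast} of the black ones. In the right-to-left, top-to-bottom reading order this means $\bfa(\bfW')$ is literally the concatenation $\bfa(W'_y)\,\bfa(W'_x)$, whence $w=\partial(W'_y)*\partial(W'_x)=u^{-1}*v$ directly, with no commutation needed. Relatedly, your map $(i,j)\mapsto(j-n,i)$ defining $U$ shifts diagonal indices by $-n$, so the resulting set need not lie in $\calH$ and will not have permutation $u$; the paper instead takes $U^t=\sigma^m(W'_y)$, i.e.\ $(i,j)\mapsto(j-m,\,i+m)$, which preserves diagonal indices (hence the permutation) while still aligning the rows of $U$ with the original red columns of $\bfW$. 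Finally, Proposition~\ref{prop:flow symmetry} is not needed here; once the northeast separation is established, the rest is elementary.
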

As a reminder, we identify $U$ with the super pipe dream $(U,\varnothing)$, so in the above proposition $U^\dagger = (U,\varnothing)^\dagger = (\varnothing,U^t)$.

\begin{proof}
Assume that $w\in S_n$ and that the black (resp. red) checkers in $\bfW$ are in rows at or below $i_0$ (resp. columns at or right of $j_0$).
For each $m\geq 0$, the red checkers in $(Y^+)^m\bfW$ get flowed into the region $\{(i,j) \mid 1\leq i+j-1< n,i\geq i_0+m\}$ where as the black checkers remain in the region $\{(i,j) \mid 1\leq i+j-1<n, j\leq j_0\}$.

Picking $m\geq 0$ large enough so that these regions are disjoint, we reach a super pipe dream $\bfW' := (Y^+)^m\bfW$ where the red checkers are positioned northeast of the black checkers.
In this case, $\mathbf{a}(\bfW') = \mathbf{a}(W'_y)\mathbf{a}(W'_x)$, so we get a decomposition $w = u^{-1} * v$ where $u^{-1} = \partial(W'_y)$ and $v = \partial(W'_x)$.

Take $\Rect(\bfW) := (U,V)$ where $V = \bfW'_x\in \PD(v)$ and $U^t = \sigma^m(\bfW'_y) \in \PD(u^{-1})$ (see Figure \ref{fig:rectification}).
This is well-defined regardless of which $m$ we picked, since applying $Y^+$ any further to $\bfW'$ is same as applying $\sigma^{-1}$ to the red checkers of $\bfW'$.
By construction, $\bfW$ and $V$ have the same number of black checkers in each row, and $U$ and $W_y$ have the same number of red checkers in each column, so $\Rect$ preserves weights in the prescribed way.
It is clear that the above process can be inverted, so $\Rect$ is a bijection.
\end{proof}

\begin{figure}
\begin{tikzpicture}[scale=0.45]
    % LEFT COLUMN
    \begin{shift}
    \draw[very thick] (0.5,-5.5) -- (0.5,-0.5) -- (5.5,-0.5);
    \drawRCGraph{5}{5}{
        1,0,0,2,0,
        3,2,1,2,3,
        2,0,3,0,0,
        1,3,0,0,1,
        2,2,0,1,0
    }
    \drawNorthLabels{0}{1}{1,2,3,4,5}
    \drawWestLabels{1}{0}{1,2,3,4,5}
    \node at (3,-6.25) {$\bfW$};

    % xvec = [(1,4),(2,1),(2,3),(2,5),(3,3),(4,1),(4,2),(4,5),(5,4)]
    % yvec = [(1,1),(2,1),(2,2),(2,4),(2,5),(3,1),(3,3),(4,2),(5,1),(5,2)]
    
    \draw[->] (6,-3) -- (9,-3)
    node[midway,above] {$(Y^+)^5$};
    \end{shift}

    % MIDDLE COLUMN
    \begin{shift}[(10,2.5)]
    \draw[very thick] (0.5,-0.5) -- (0.5,-10.5);
    \draw[very thick] (0.5,-5.5) -- (10.5,-5.5);
    \drawRCGraph{10}{10}{
        9,9,9,9,9,2,0,0,2,0,
        9,9,9,9,0,2,0,2,0,2,
        9,9,9,0,0,2,2,0,2,0,
        9,9,0,0,0,0,2,0,0,0,
        9,0,0,0,0,0,2,0,0,0,
        0,0,0,1,0,0,0,0,0,0,
        1,1,1,0,0,0,0,0,0,0,
        0,0,0,1,0,0,0,0,0,0,
        0,1,1,0,1,0,0,0,0,0,
        0,0,0,1,0,0,0,0,0,0
    }
    \drawNorthLabels{0}{1}{1,2,3,4,5,6,7,8,9,10}
    \drawWestLabels{1}{0}{-4,-3,-2,-1,0,1,2,3,4,5}
    % \draw[very thick,dashed,color=x color 1]
    % (0.55,-10.45) rectangle ++(4.9,4.9);
    % \draw[very thick,dashed,color=y color 1]
    % (5.55,-5.45) rectangle ++(4.9,4.9);
    \end{shift}

    % RIGHT COLUMN
    \begin{shift}[(23,-3.5)]
    \draw[very thick] (0.5,-5.5) -- (0.5,-0.5) -- (5.5,-0.5);
    \drawRCGraph{5}{5}{
        0,0,0,1,0,
        1,1,1,0,0,
        0,0,0,1,0,
        0,1,1,0,1,
        0,0,0,1,0
    }
    \drawNorthLabels{0}{1}{1,2,3,4,5}
    \drawWestLabels{1}{0}{1,2,3,4,5}
    \node at (6.5,-3) {$V$};
    \end{shift}
    
    \begin{shift}[(23,3.5)]
    \draw[very thick] (0.5,-5.5) -- (0.5,-0.5) -- (5.5,-0.5);
    \drawRCGraph{5}{5}{
        2,0,0,2,0,
        2,0,2,0,2,
        2,2,0,2,0,
        0,2,0,0,0,
        0,2,0,0,0
    }
    \drawNorthLabels{0}{1}{1,2,3,4,5}
    \drawWestLabels{1}{0}{1,2,3,4,5}
    \node at (6.5,-3) {\color{y color 1}$U^\dagger$};
    \end{shift}

    % ARROW
    \draw[->] (3,-7.5) -- (3,-10) -- (26,-10) -- (26,-9.5);
    \node at (14.5,-10.5) {$\Rect$};
\end{tikzpicture}

\caption{
    Rectification of the super pipe dream $\bfW$ (then called $\bfP$) from Figure \ref{fig:flow example}.
    Here $\Rect(\bfW) = (V,U) \in \PD^+(v)\times \PD^+(u)$ for $u = 2351764$ and $v = 152374698$.
    One can verify that $w = 273561498 = u^{-1}*v$.
}
\label{fig:rectification}
\end{figure}
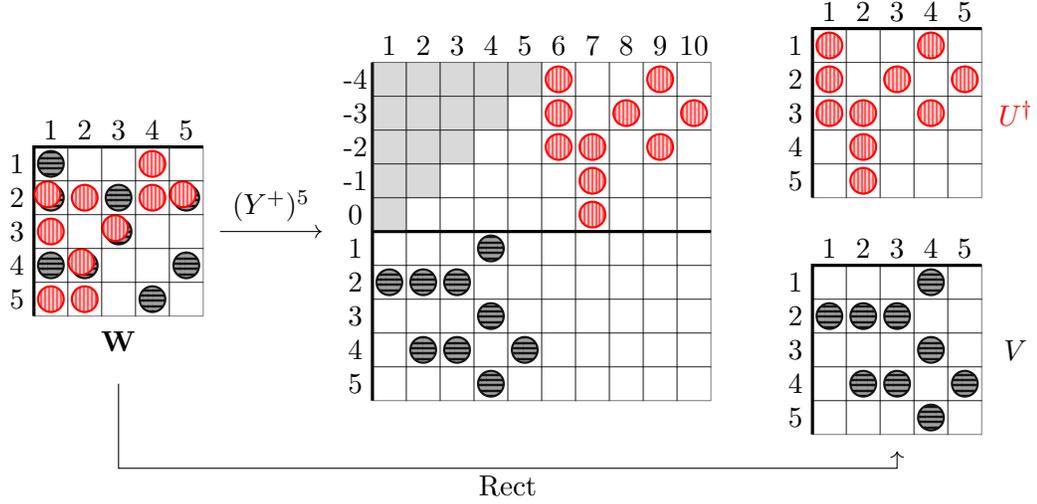

Notice that we could have dually found $m\geq 0$ such that the red checkers in $\bfW' := (Y^-)^m\bfW$ are all southwest of the black checkers, then obtaining pipe dreams $V = W'_x$ and $U = \sigma^{-m}(W'_y)$.
This turns out to be a distinct process which we call pipe dream \textit{corectification}:

\begin{proposition}
\label{prop:corectification}
Let $w\in S_\infty$.
There is a bijection \[
    \coRect\colon \SPD(w)
    \overset\sim\longrightarrow
    \coprod_{
        w = u^{-1} * v
    } \PD(v) \times \PD(u)
\] which preserves weights as in Proposition \ref{prop:rectification}.
We call the pair $\coRect(\bfW) := (V,U)$ the \textit{corectification} of $\bfW$.
\end{proposition}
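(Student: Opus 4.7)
The plan is to parallel the proof of Proposition \ref{prop:rectification}, using the inverse flow $Y^-$ in place of $Y^+$ throughout. Given $\bfW\in \SPD(w)$, one chooses $m \geq 0$ large enough that in $\bfW'' := (Y^-)^m\bfW$ the red checkers lie strictly southwest of (and disjoint from) the black checkers. Such $m$ exists because $Y^-$ preserves the row distribution of black checkers, while acting as the shift $\sigma$ on all-red pipe dreams (Proposition \ref{prop:flow properties}(i)); enough applications thus push the reds into arbitrarily deeper rows and more-leftward columns, while the blacks remain within their original finite row range.

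Once the reds lie in lower rows than the blacks in $\bfW''$, the pipe dream word read top-to-bottom factors as $\mathbf{a}(\bfW'') = \mathbf{a}(W''_x)\cdot \mathbf{a}(W''_y)$, and applying $\partial$ yields a Demazure-product factorization of $w$ into the black and red permutations of $\bfW''$. Mirroring the rectification construction---but with the opposite shift direction---take $V := W''_x$ and $U := (\sigma^{-m}(W''_y))^t$. This produces a pair $(V,U)\in \PD(v)\times \PD(u)$ for the appropriate $(u,v)$ indexing the target coproduct $\coprod_{w = u^{-1}\ast v}\PD(v)\times \PD(u)$.

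Weight preservation follows from the same two observations as in Proposition \ref{prop:rectification}. First, $Y^-$ preserves the row distribution of black checkers (an immediate consequence of property (ii) of $Y^+_j$ applied to $(Y^+)^{-1}$), so $V = W''_x$ has rows matching $W_x$ and contributes the correct $x$-weight. Second, each step of $(Y^-)^m$ shifts the column distribution of reds leftward by one, so after $m$ applications the distribution is shifted left by $m$; the subsequent shift $\sigma^{-m}$ moves positions by $(-m,+m)$ and thus shifts columns rightward by $m$, exactly cancelling the earlier leftward shift. Consequently $U^t = \sigma^{-m}(W''_y)$ has columns matching $W_y$, giving the correct $y$-weight, and the full identity $\wt(\bfW) = \beta^{\ell(u)+\ell(v)-\ell(w)}\wt(V)\wt(U^\dagger)$ follows.

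Bijectivity is immediate since $Y^-$, $\sigma^{-m}$, and transpose are all invertible---one recovers $\bfW$ from $(V,U)$ by assembling a super pipe dream with blacks at $V$ and reds at $\sigma^m(U^t)$ suitably separated, then applying $(Y^+)^m$. The main obstacle, though minor, is to verify that the construction is independent of the choice of $m$: applying $Y^-$ once more to an already-separated $\bfW''$ acts purely as $\sigma$ on the red portion, which is exactly cancelled by the corresponding extra $\sigma^{-1}$ appearing in the definition of $U$. This routine check confirms that $\coRect$ is well-defined and establishes that it is genuinely distinct from $\Rect$---since $Y^-$ flows reds southwest rather than northeast, producing in general a different output pair---justifying the remark preceding the proposition.
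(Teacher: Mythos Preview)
Your approach is exactly the paper's: the paper gives no separate proof of this proposition, only the one-sentence description of the construction in the paragraph immediately preceding it, and you have faithfully fleshed out those details (choice of $m$, well-definedness independent of $m$, weight preservation, invertibility).

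One point deserves scrutiny. You correctly note that once the red checkers sit southwest of the black ones in $\bfW''$, the word factors as $\mathbf{a}(\bfW'') = \mathbf{a}(W''_x)\cdot\mathbf{a}(W''_y)$. Applying $\partial$ then yields $w = \partial(W''_x) * \partial(W''_y) = v * u^{-1}$, \emph{not} $w = u^{-1}*v$. These two indexing conditions are genuinely different: for $w = s_1s_2$, the pair $(u,v) = (s_1,s_2)$ satisfies $u^{-1}*v = s_1s_2 = w$ but $v*u^{-1} = s_2s_1 \neq w$. So the construction you (and the paper) describe naturally lands in $\coprod_{w = v*u^{-1}}\PD(v)\times\PD(u)$, whereas you assert without justification that it hits the stated target $\coprod_{w = u^{-1}*v}\PD(v)\times\PD(u)$. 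This discrepancy is already present in the paper's statement, so it is inherited rather than introduced by your argument; still, the sentence ``this produces a pair \ldots\ for the appropriate $(u,v)$ indexing the target coproduct'' papers over a real mismatch rather than resolving it.
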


An immediate consequence of Proposition \ref{prop:flow symmetry} is the following symmetry result enjoyed by both rectification and corectification.

\begin{proposition}
\label{prop:symmetry theorem}
Let $w\in S_\infty$ and $\bfW\in \SPD(w)$.
\begin{itemize}
    \item[(i)]
    If $\Rect(\bfW) = (V,U)$, then $\Rect(\bfW^\dagger) = (U,V)$.
    \item[(ii)]
    If $\coRect(\bfW) = (V,U)$, then $\coRect(\bfW^\dagger) = (U,V)$.
\end{itemize}
\end{proposition}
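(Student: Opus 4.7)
The plan is to reduce both parts of the proposition to Proposition \ref{prop:flow symmetry}, using the compatibility formulas $Y^+(\bfP^\dagger) = (X^+\bfP)^\dagger$ and $\sigma\circ Y^+ = Y^+\circ\sigma$ from Proposition \ref{prop:flow properties}. Both $\Rect$ and $\coRect$ are defined by iterating a flow operator until the two colors of checkers separate, so the proof comes down to checking that this separation procedure intertwines with $(-)^\dagger$ up to swapping the two output factors.

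For part (i), let $\bfW \in \SPD(w)$ and write $\Rect(\bfW) = (V, U)$. By the construction in the proof of Proposition \ref{prop:rectification}, there is some $m \geq 0$ such that $\bfW' := (Y^+)^m \bfW$ has all red checkers northeast of all black checkers, with $V = W'_x$ and $U^t = \sigma^m(W'_y)$. To compute $\Rect(\bfW^\dagger)$ I would iterate the identity $Y^+(\bfP^\dagger) = (X^+\bfP)^\dagger$ to obtain $(Y^+)^m \bfW^\dagger = ((X^+)^m \bfW)^\dagger$, then iterate the identity $X^+ = \sigma \circ Y^+$ (using that $\sigma$ commutes with $Y^+$) to obtain $(X^+)^m \bfW = \sigma^m \bfW'$. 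Unpacking the adjoint on the right-hand side gives
\[
    (Y^+)^m \bfW^\dagger = (\sigma^m \bfW')^\dagger = \bigl(U,\, (\sigma^m V)^t\bigr).
\]
Since $\sigma^m$ translates $V$ far to the southwest and transposition reflects the result across the diagonal, $(\sigma^m V)^t$ lies far to the northeast of $U$ once $m$ is large enough, so this super pipe dream is already in the form required by the rectification recipe for $\bfW^\dagger$. Reading off the two factors of $\Rect(\bfW^\dagger)$ from it, the black part is $U$ and the red part shifts back via the identity $\sigma^m\bigl((\sigma^m V)^t\bigr) = V^t$ to give $V$. Hence $\Rect(\bfW^\dagger) = (U, V)$.

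Part (ii) follows by the same template with $Y^-$ in place of $Y^+$, $X^-$ in place of $X^+$, and $\sigma^{-1}$ in place of $\sigma$; the corresponding identities $(Y^-)^m(\bfP^\dagger) = ((X^-)^m \bfP)^\dagger$ and $(X^-)^m = \sigma^{-m}(Y^-)^m$ are immediate from inverting the identities used for $\Rect$, and the terminal condition ``red southwest of black'' is achieved analogously after enough iterations. I expect the main obstacle to be purely a bookkeeping one: namely, chasing the shift $\sigma^m$ and the transpose through the adjoint carefully enough to confirm that the pair $(U, (\sigma^m V)^t)$ really does feed back into the rectification algorithm as the terminal state producing $(U, V)$, and that enlarging $m$ (which is harmless, since the output of $\Rect$ and $\coRect$ does not depend on the specific large enough $m$ chosen) guarantees the separation hypothesis. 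Once this is checked, the whole statement is a formal consequence of the symmetry theorem.
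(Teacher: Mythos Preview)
Your proposal is correct and follows essentially the same route as the paper: both compute $(Y^+)^m(\bfW^\dagger) = ((X^+)^m\bfW)^\dagger = (\sigma^m\bfW')^\dagger$ via Propositions \ref{prop:flow properties} and \ref{prop:flow symmetry}, then read off the two components. The only cosmetic difference is that the paper fixes $m$ large enough for both $\bfW$ and $\bfW^\dagger$ at the outset, whereas you verify the separation condition for $\bfW^\dagger$ a posteriori; either way the bookkeeping you flag as the ``main obstacle'' is exactly the short computation $\sigma^m((\sigma^m V)^t) = V^t$ that you already sketched.
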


\begin{proof}
We prove (i), with (ii) being similar.
Let $m\geq 0$ be large enough for both $\bfW$ and $\bfW^\dagger$ as in the definition of $\Rect$.
Write $\bfW' = (Y^+)^m\bfW$ so that $V = W'_x$ and $U^t = \sigma^m(W'_y)$.
Then by Propositions \ref{prop:flow properties} and \ref{prop:flow symmetry}, we have \begin{align*}
    \bfW''
    := (Y^+)^m(\bfW^\dagger)
    = ((X^+)^m\bfW)^\dagger
    = (\sigma^m(\bfW'))^\dagger,
\end{align*}
so $
    W''_x = (\sigma^m(W'_y))^t = U
$ and $
    W''_y = (\sigma^m(W'_x))^t = \sigma^{-m}((W'_x)^t) = \sigma^{-m}(V^t)
$.
It follows that $\Rect(\bfW^\dagger) = (W''_x,(\sigma^m(W''_y))^t) = (U,V)$.
\end{proof}

On a first pass, there may not seem to be a reason to prefer rectification over corectification, but it turns out that the former has nicer properties.\footnote{
    Perhaps \textit{in-corectification} would have been a better name for this map.
}
Proposition \ref{prop:rectification preserves ordinary} is the key step to seeing this.
We first prove the following lemma:

\begin{lemma}
\label{lem:Y+ preserves ordinary}
Let $w\in S_\infty$, $\bfP\in \SPD(w)$, and $k\geq 0$.
Write $\bfQ := Y^+\bfP$.
The following conditions are equivalent:
\begin{itemize}
    \item
    $P_x$ and $\sigma^k(P_y)$ are both ordinary.
    \item 
    $Q_x$ and $\sigma^{k+1}(Q_y)$ are both ordinary.
\end{itemize}
\end{lemma}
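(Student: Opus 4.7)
The plan is to reformulate each condition as four support inclusions (row and column bounds per color class), dispatch two of them via the built-in properties of $Y^+$, and handle the remaining two by a careful analysis of the ladder modifications.

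I would rewrite Condition~1 as $P_x \subseteq \bbN \times \bbN$ together with $P_y \subseteq R_k := \{(i,j) : i \geq 1-k,\ j \geq k+1\}$, and Condition~2 as the analogous statement with $\bfP$ replaced by $\bfQ$ and $k$ replaced by $k+1$. Two of the four resulting half-conditions are then immediate: the row bound on blacks passes freely between $P_x$ and $Q_x$ because $Y^+$ preserves the per-row count of blacks (property (ii) for $Y^+_j$), and the column bound on reds passes with the shift $k+1 \rightsquigarrow k+2$ because $Y^+$ shifts each red's column index by exactly $+1$ (a consequence of property (iii) for $Y^+_j$). For the column bound on blacks, I would observe that, under the red-column hypothesis, every step $Y^+_j$ with $j \leq 0$ is vacuous: the intermediate state $Y^+_{\geq j+1}\bfP$ agrees with $\bfP$ in columns $\leq j$ and has no reds in columns $\leq k$, so no ladders form in those steps; hence no black ever migrates between column $0$ and column $1$, and this half-condition too is equivalent between $\bfP$ and $\bfQ$.

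The core of the argument is the row bound on reds. For Condition~1~$\Rightarrow$~Condition~2, each red in $\bfQ$ arises from either a $1$-ladder, a middle-row swap of a big ladder, or an SW-to-NE move in a big ladder. The first two preserve the red's row, keeping it $\geq 1-k \geq -k$. In the third, the NE row $i'$ is the first empty row above the SW $(i,j)$ in column $j$, and the key invariant to establish is that throughout step $Y^+_j$ every checker in column $j$ has row $\geq 1-k$: reds in column $j$ form a subset of those in $\bfP$, while blacks in column $j$ either were already present in $\bfP$ (rows $\geq 1$) or were swapped in from column $j+1$ via a middle-row swap of an earlier ladder, whose content in $Y^+_{\geq j+1}\bfP$ is red-free by construction and hence purely black (rows $\geq 1$). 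Consequently, row $-k$ in column $j$ is empty at every stage, forcing $i' \geq -k$. For the converse I would argue by contrapositive: a red in $\bfP$ at row $r \leq -k$ either triggers a $1$-ladder, whose east tile in the intermediate state must carry a (necessarily non-red) checker at row $r$, giving a row-$\leq 0$ black in $\bfP$ that survives into $\bfQ$ by row preservation of blacks and violates Condition~2 on $Q_x$; or it triggers a big ladder that sends the red to a row strictly less than $r$, hence strictly less than $-k$, in $\bfQ$, violating Condition~2 on $Q_y$.

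The main obstacle I expect is verifying the row-$\geq 1-k$ invariant on column $j$ across the successive ladder modifications of step $Y^+_j$; the delicate point is the middle-row swap, which demands careful use of the fact that $Y^+_{\geq j+1}\bfP$ has no reds in column $j+1$, ensuring that only blacks (and hence rows $\geq 1$) can flow leftward into column $j$ during the step.
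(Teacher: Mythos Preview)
Your overall strategy matches the paper's: decompose each condition into row/column bounds on $P_x$ and $P_y$, dispatch the row bound on blacks and the column bound on reds via the defining properties of $Y^+$, reduce the column bound on blacks to the observation that steps $Y^+_j$ with $j\leq k$ are vacuous once reds are confined to columns $\geq k+1$, and isolate the row bound on reds as the substantive part. Your forward direction is correct, though more elaborate than necessary: because the ladders formed during step $j$ are pairwise disjoint and each is determined entirely by $Y^+_{\geq j+1}\bfP$, column $j$ at and above the current ladder's SW corner is literally unchanged from $\bfP$. Hence row $-k$ of column $j$ is empty and every NE corner lands at row $\geq -k$ without needing to track an invariant through successive modifications.

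There is, however, a genuine gap in your contrapositive for the backward direction. Your dichotomy ``either triggers a $1$-ladder or triggers a big ladder that sends the red to a row strictly less than $r$'' presumes that the offending red at $(r,j)$ eventually becomes the SW corner of some ladder. But it need not: if a lower red in column $j$ spawns a big ladder whose span includes row $r$, then $(r,j)$ sits in a \emph{middle row}, and the middle-row swap moves this red to $(r,j+1)$ at the \emph{same} row $r$, never strictly upward. When $r=-k$ exactly, this alone does not contradict $Q_y\subseteq\{i\geq -k\}$. The fix is easy and is what the paper does: rather than following the particular red, observe that any big ladder containing $(r,j)$ (whether as SW or middle row) has top row $i_0<r\leq -k$, so its NE corner $(i_0,j+1)$ receives a red at row $i_0\leq -k-1$, and \emph{that} red persists into $\bfQ$. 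Alternatively you can take $(r,j)$ to be the highest red in its column with $r\leq -k$; then under the already-established bound $P_x\subseteq\{i\geq 1\}$ the position $(r-1,j)$ is empty, forcing either the SW case or a big ladder with top row exactly $r-1$.
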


\begin{proof}
Note first that in general, if $\bfP = Y^+\bfQ$, then
$P_x\sseq \{(i,j) \mid i\geq 1\} \iff Q_x\sseq \{(i,j) \mid i\geq 1\}$ and $P_y\sseq \{(i,j)\mid j\geq 1+k\} \iff Q_y\sseq \{(i,j)\mid j\geq 1+(k+1)\}$.
Furthermore, $P_x\cup P_y\sseq \{(i,j) \mid i\geq 1-k\} \implies Q_x\cup Q_y\sseq \{(i,j)\mid i\geq 1-(k+1)\}$ since for each $j\in\bbZ$, the top checker(s) in columns $j$ of $Y^+_{\geq j+1}\bfP$ will be in at or below row $1-k$, so each ladder during step $j$ of $Y^+\bfP$ has top row $\leq 1-(k+1)$.

In particular, either condition in the proposition implies that $\bfQ = Y^+_{k+1}Y^+_{k+2}\dots Y^+_M\bfP$ for $M$ sufficiently large, so $\bfQ$ differs from $\bfP$ only in columns $k$ and greater.
This shows that $Q_x$ and $\sigma^{k+1}(Q_y)$ are ordinary assuming the first condition, and $P_x$ is ordinary assuming the second condition.

It remains to show that $\sigma^k(P_y)$ is ordinary assuming the second condition.
If it is not, the issue would be a red checker in $\bfP$ at some position $(i,j)$ with $i \leq 1-(k+1)$.
There are no black checkers in row $i$ in $\bfP$ and, hence, in $Y^+_{\geq j+1}\bfP$, so $(i,j)$ must be contained in a big ladder $L$ during step $j$ of $Y^+\bfP$.
$L$ necessarily has top row $i_0 < i \leq 1-(k+1)$.
But then $(i_0,j)$ would be a checker in both $Y^+_{\geq j}\bfP$ and, hence, $\bfQ$.
This contradicts the hypothesis.
\end{proof}

\begin{proposition}
\label{prop:rectification preserves ordinary}
Let $w\in S_\infty$ and $\bfW\in \SPD(w)$.
Write $(V,U) = \Rect(\bfW)$.
Then $\bfW$ is ordinary if and only if both $V$ and $U$ are ordinary.
\end{proposition}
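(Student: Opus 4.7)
The plan is to reduce the proposition to a short induction built on Lemma \ref{lem:Y+ preserves ordinary}. Following the construction in the proof of Proposition \ref{prop:rectification}, choose $m \geq 0$ large enough that in $\bfW' := (Y^+)^m \bfW$ the red checkers lie northeast of the black checkers, and set $V = W'_x$ and $U = (\sigma^m(W'_y))^t$.

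Since transposition preserves ordinariness, the condition that both $V$ and $U$ be ordinary is equivalent to the condition that $W'_x$ and $\sigma^m(W'_y)$ both be ordinary. Meanwhile, $\bfW$ being ordinary means exactly that $W_x$ and $W_y = \sigma^0(W_y)$ are both ordinary. So it suffices to establish the equivalence
\[
    W_x \text{ and } \sigma^0(W_y) \text{ are ordinary}
    \iff
    ((Y^+)^m\bfW)_x \text{ and } \sigma^m(((Y^+)^m\bfW)_y) \text{ are ordinary}.
\]

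I would prove this by induction on $m$. The base case $m=0$ is a tautology. For the inductive step, apply Lemma \ref{lem:Y+ preserves ordinary} to $\bfP = (Y^+)^{m-1}\bfW$ and $\bfQ = (Y^+)^m\bfW$ with index $k = m-1$; this is precisely the input-output equivalence the lemma provides, so the induction closes immediately.

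I do not anticipate any genuine obstacle here: the content of the proposition has been bundled into Lemma \ref{lem:Y+ preserves ordinary}, whose statement was set up with exactly this iteration in mind (the index $k$ tracks how many times $Y^+$ has been applied, and the shift $\sigma^k$ on the red checkers compensates for the westward drift of the "ordinary" region required to house them at each stage). The only thing one must be careful about is matching up $V$ and $U$ with $W'_x$ and $\sigma^m(W'_y)$ correctly, which is a routine unpacking of the definition of $\Rect$.
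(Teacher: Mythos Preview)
Your proposal is correct and follows essentially the same approach as the paper: both reduce the statement to an induction on the number of applications of $Y^+$, invoking Lemma~\ref{lem:Y+ preserves ordinary} at each step, and then identify $V$ and $U$ with $W'_x$ and $\sigma^m(W'_y)$ (up to transpose) at the terminal stage $k=m$.
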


\noindent
\textit{Remark:}\quad
The conclusion of Proposition \ref{prop:rectification preserves ordinary} is not necessarily true if we replace $\Rect$ with $\coRect$.
See Figure \ref{fig:corectification} for an example.
\vspace{4pt}

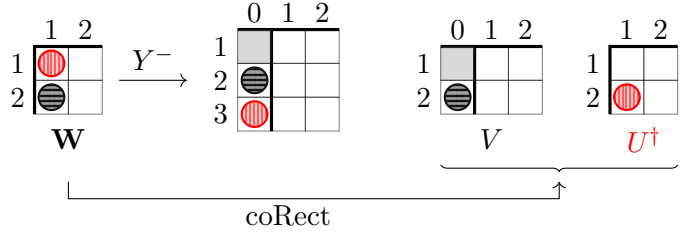
\begin{figure}
\begin{tikzpicture}[scale=0.45]
    % DIAGRAM 1
    \begin{shift}
    \draw[very thick] (0.5,-2.5) -- (0.5,-0.5) -- (2.5,-0.5);
    \drawRCGraph{2}{2}{
        2,0,
        1,0
    }
    \drawNorthLabels{0}{1}{1,2}
    \drawWestLabels{1}{0}{1,2}
    \node at (1.5,-3.25) {$\bfW$};
    
    \draw[->] (3,-1.5) -- (5,-1.5) node[midway,above] {$Y^-$};
    \end{shift}

    % DIAGRAM 2
    \begin{shift}[(6,0.5)]
    \draw[very thick] (1.5,-3.5) -- (1.5,-0.5);
    \draw[very thick] (0.5,-0.5) -- (3.5,-0.5);
    \drawRCGraph{3}{3}{
        9,0,0,
        1,0,0,
        2,0,0
    }
    \drawNorthLabels{0}{1}{0,1,2}
    \drawWestLabels{1}{0}{1,2,3}
    \end{shift}

    % DIAGRAM 3
    \begin{shift}[(12,0)]
    \draw[very thick] (1.5,-2.5) -- (1.5,-0.5);
    \draw[very thick] (0.5,-0.5) -- (3.5,-0.5);
    \drawRCGraph{2}{3}{
        9,0,0,
        1,0,0
    }
    \drawNorthLabels{0}{1}{0,1,2}
    \drawWestLabels{1}{0}{1,2}
    \node at (2,-3.25) {$V$};
    \end{shift}
    
    % DIAGRAM 4
    \begin{shift}[(17,0)]
    \draw[very thick] (0.5,-2.5) -- (0.5,-0.5) -- (2.5,-0.5);
    \drawRCGraph{2}{2}{
        0,0,
        2,0
    }
    \drawNorthLabels{0}{1}{1,2}
    \drawWestLabels{1}{0}{1,2}
    \node at (1.5,-3.25) {\color{y color 1}$U^\dagger$};
    \end{shift}

    % ARROW
    \draw[->] (1.5,-4.5) -- (1.5,-5) -- (16,-5) -- (16,-4.5);
    \node at (8,-5.5) {$\coRect$};
    \draw [decorate,decoration={brace,mirror}] (12.5,-4) -- (19.5,-4);
\end{tikzpicture}

\caption{
    An ordinary super pipe dream $\bfW\in \SPD^+$ whose corectification $\coRect(\bfW) = (V,U)$ does not lie in $\PD^+\times \PD^+$.
}
\label{fig:corectification}
\end{figure}

\begin{proof}
By induction with Lemma \ref{lem:Y+ preserves ordinary}, we see that $\bfW$ is ordinary if and only if $((Y^+)^k\bfW)_x$ and $(\sigma^k((Y^+)^k\bfW))_y$ are ordinary for all $k\geq 0$.
Taking $k=m$ as in the definition of $\Rect$ gives the result.
\end{proof}

\subsection{A bijective proof of the Cauchy identity for Grothendieck polynomials}
\label{subsect:cauchy identities}

Proposition \ref{prop:rectification preserves ordinary} allows us to obtain a bijective proof the Cauchy identity for Grothendieck polynomials (Theorem \ref{thm:K-cauchy identity}).

\begin{proof}[Proof of Theorem \ref{thm:K-cauchy identity}]
By Proposition \ref{prop:rectification preserves ordinary}, $\Rect$ restricts to a bijection
\[
    \Rect\colon \SPD^+(w)
    \overset\sim\longrightarrow
    \coprod_{
        w = u^{-1} * v
    } \PD^+(v) \times \PD^+(u).
\]
Furthermore, Proposition \ref{prop:rectification} guarantees that if $\bfW\in \SPD(w)$ with $(V,U) = \Rect(\bfW)$, then $\wt(\bfW) = \beta^{\ell(u)+\ell(v)-\ell(w)}\wt(V)\wt(U^\dagger)$.
The proposition now follows by the observation that \[
    \frakG^{(\beta)}_v(x)\frakG^{(\beta)}_u(y)
    = \sum_{\substack{
        V\in \PD^+(v) \\
        U\in \PD^+(u)
    }} \wt(V)\wt(U^\dagger) \qedhere
\]
\end{proof}

We next show the Cauchy identity for Schubert polynomials (Theorem \ref{thm:cauchy identity}).
The key step is to check that $\Rect$ preserves reducedness.

\begin{proposition}
\label{prop:rectification preserves reducedness}
Let $w\in S_\infty$ and $\bfW\in \SPD(w)$.
Write $(V,U)\in \PD(v)\times \PD(u)$ for either $\Rect(\bfW)$ or $\coRect(\bfW)$.
Then $\bfW$ is reduced if and only if $\ell(w) = \ell(u) + \ell(v)$ and both $V$ and $U$ are reduced.
\end{proposition}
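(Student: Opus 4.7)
\smallskip

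The plan is to reduce the statement to a series of cardinality comparisons using the weight-preserving property of $\Rect$ (resp.\ $\coRect$) established in Proposition~\ref{prop:rectification}. The key numerical observation is that if $(V,U) = \Rect(\bfW)$, then $|V| = |W_x|$ and $|U| = |W_y|$. Indeed, by construction $V = W'_x$ where $\bfW' = (Y^+)^m\bfW$, and each application of $Y^+_j$ preserves the number of black checkers in every row (property (ii) of the flow operator); summing over rows gives $|V| = |W_x|$. Similarly $U^t = \sigma^m(W'_y)$, and since each $Y^+_j$ moves red checkers between columns without creating or destroying any, $|U| = |W'_y| = |W_y|$. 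The same argument works for $\coRect$, using $Y^-$ in place of $Y^+$.

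Next, I would assemble three standard inequalities: (a) $|V| \geq \ell(v)$ and $|U| \geq \ell(u)$, since any pipe dream for a permutation has at least as many crossings as the length of that permutation; (b) $\ell(w) \leq \ell(u) + \ell(v)$, which holds because $w = u^{-1} * v$ in the Demazure algebra and the 0-Hecke product can only shorten or preserve length; and (c) $|W_x| + |W_y| \geq \ell(w)$ always, with equality if and only if $\bfW$ is reduced (this is the equivalent formulation of reducedness recorded in Section~\ref{subsect:super pipe dreams}).

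Chaining these gives
\[
    |W_x| + |W_y| \;=\; |V| + |U| \;\geq\; \ell(v) + \ell(u) \;\geq\; \ell(w).
\]
If $\bfW$ is reduced, the leftmost quantity equals $\ell(w)$, so every inequality in the chain collapses to an equality; this yields $\ell(w) = \ell(u) + \ell(v)$ together with $|V| = \ell(v)$ and $|U| = \ell(u)$, i.e.\ both $V$ and $U$ are reduced. Conversely, if $\ell(w) = \ell(u) + \ell(v)$ and both $V$ and $U$ are reduced, then $|W_x| + |W_y| = |V| + |U| = \ell(v) + \ell(u) = \ell(w)$, so $\bfW$ is reduced.

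There is no real obstacle here; once the cardinality identities $|V| = |W_x|$ and $|U| = |W_y|$ are in hand, the rest is a short pigeonhole argument on the chain of inequalities. The only small subtlety is confirming that the cardinality identities hold for $\coRect$ as well, but this is immediate from the fact that $Y^-$ inherits the ``same number of black checkers per row'' and ``conservation of red checkers'' properties from $Y^+$ by being its inverse.
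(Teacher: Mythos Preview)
Your proposal is correct and follows essentially the same approach as the paper: both arguments use the cardinality identities $|V|=|W_x|$ and $|U|=|W_y|$ together with the chain of inequalities $|W_x|+|W_y|=|V|+|U|\geq \ell(v)+\ell(u)\geq \ell(w)$, collapsing it via the characterization of reducedness as $|W_x|+|W_y|=\ell(w)$. The paper's write-up is terser but the logic is identical.
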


\begin{proof}
If $\bfW$ is reduced, then $\ell(w) = |W_x| + |W_y| = |V| + |U| \geq \ell(v) + \ell(u) \geq \ell(w)$, so in fact $|V| = \ell(v)$, $|U| = \ell(u)$, and $\ell(w) = \ell(v) + \ell(u)$.

Conversely, if $U$ and $V$ are reduced and $\ell(w) = \ell(u) + \ell(v)$, then $\ell(w) = \ell(u) + \ell(v) = |V| + |U| = |W_x| + |W_y| \geq |W_x\cup W_y| \geq \ell(w)$, so $|W_x| + |W_y| = \ell(w)$.
\end{proof}

\begin{proof}[Proof of Theorem \ref{thm:cauchy identity}]
Combining Propositions \ref{prop:rectification preserves ordinary} and \ref{prop:rectification preserves reducedness},
we get a bijection \[
    \Rect\colon \SPD^+_0(w)
    \overset\sim\longrightarrow
    \coprod_{
        w\doteq u^{-1}v
    } \PD^+_0(v) \times \PD^+_0(u)
\] which is weight preserving by Proposition \ref{prop:rectification}.
\end{proof}

\section{Further applications}
\label{sect:applications}

\subsection{Derivatives of Grothendieck polynomials and Macdonald's reduced word formula}
\label{subsect:nabla identities}

Consider the differential operator \[
    \nabla^{(\beta)} := 
    \sum_{i=1}^\infty (1+\beta x_i) \frac\partial{\partial x_i},
\] acting on the polynomial ring $\bbZ[\beta][x]$.
The following positive formula for $\nabla^{(\beta)}$ acting on a Grothendieck polynomial was observed by Pechenik, Speyer, and Weigandt in \cite{HamakerPechenikSpeyerWeigandt20}.
\begin{proposition}
\label{prop:K-nabla identity}
Let $w\in S_\infty$. Then \[
    \nabla^{(\beta)} \frakG_w^{(\beta)}(x)
    = \sum_{
        w = s_k*v
    }
    \beta^{\delta_{w,v}}k\cdot \frakG_v^{(\beta)}(x)
\] where $\delta_{w,v} =
\left\{\begin{smallmatrix}
    1 & & w=v \vspace{2pt}\\
    0 & & w\neq v
\end{smallmatrix}\right.$.
\end{proposition}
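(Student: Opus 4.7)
The plan is to derive the identity by applying the differential operator $D := \sum_{k \geq 1} \partial/\partial y_k \big|_{y = 0}$ to both sides of the Cauchy identity for Grothendieck polynomials (Theorem \ref{thm:K-cauchy identity}).

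First I would compute $D\frakG_u^{(\beta)}(y)$. The operator $\partial/\partial y_k\big|_{y = 0}$ extracts the coefficient of $y_k$ with all other $y_j$ set to zero; combinatorially this coefficient counts pipe dreams $P \in \PD^+(u)$ with a single checker in row $k$. Such a pipe dream $\{(k,j)\}$ has permutation $s_{k+j-1}$, so one exists for each $k \leq m$ precisely when $u = s_m$. Summing over $k$ gives $D\frakG_u^{(\beta)}(y) = m$ when $u = s_m$ and $0$ otherwise. Applying $D$ to the Cauchy identity thus collapses the $u$-sum to the $u = s_m$ terms:
\[
    D\frakG_w^{(\beta)}(x;y) = \sum_{m \geq 1} m \sum_{v : w = s_m * v} \beta^{1 + \ell(v) - \ell(w)} \frakG_v^{(\beta)}(x).
\]
A short case analysis on $w = s_m * v$ (either $w = s_m v$ with $\ell(v) = \ell(w) - 1$, or $v = w$ with $m \in \Des(w^{-1})$) shows that $1 + \ell(v) - \ell(w) = \delta_{w,v}$, so this expression matches the right-hand side of the proposition on the nose.

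It remains to identify $D\frakG_w^{(\beta)}(x;y)$ with $\nabla^{(\beta)} \frakG_w^{(\beta)}(x)$. The former combinatorially sums $\beta^{|P_x| + 1 - \ell(w)} \prod_{(i,j) \in P_x} x_i$ over $\bfP \in \SPD^+(w)$ with $|P_y| = 1$, whereas direct differentiation gives $\nabla^{(\beta)} \wt(P) = \sum_{(i,j) \in P} \bigl(\beta^{|P| - \ell(w)} \prod_{(i',j') \in P \sm \{(i,j)\}} x_{i'} + \beta \wt(P)\bigr)$ for each $P \in \PD^+(w)$. I would exhibit the weight-preserving bijection that sends $\bfP$ with single red checker at $(i_0, j_0)$ to the marked pair $(P_x \cup \{(i_0, j_0)\}, (i_0, j_0))$ tagged ``delete'' when $(i_0, j_0) \notin P_x$, and to $(P_x, (i_0, j_0))$ tagged ``$\beta$-copy'' when $(i_0, j_0) \in P_x$. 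The inverse is immediate, and in both cases $\partial(\bfP) = \partial(P_x \cup P_y) = w$.

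The main obstacle is verifying weight-preservation in the $\beta$-copy branch, but this is a clean numerical match: whenever the red and black checkers overlap at $(i_0, j_0)$, the underlying pipe dream $P_x \cup P_y$ equals $P_x$ as a set, so the $\beta$-exponent $|P_x| + |P_y| - \ell(w) = |P_x| + 1 - \ell(w)$ exceeds $|P| - \ell(w) = |P_x| - \ell(w)$ by exactly one, supplying the extra $\beta$ factor. With both branches weight-preserving, both sides of the proposition equal $D\frakG_w^{(\beta)}(x;y)$, completing the proof.
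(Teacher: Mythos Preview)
Your proof is correct and takes essentially the same approach as the paper: both extract the $y$-degree $1$ part of the Cauchy identity (your operator $D$ is exactly ``take the degree-$1$ component in $y$ and set $y=1$''), and both identify the left-hand side with $\nabla^{(\beta)}\frakG_w^{(\beta)}(x)$ via the same two-to-one correspondence between super pipe dreams $\bfP$ with $|P_y|=1$ and marked pairs $(Q,(i,j))$ with $(i,j)\in Q\in\PD^+(w)$. The only cosmetic difference is that the paper records the intermediate equation with the factor $\frakS_{s_k}(y)=y_1+\cdots+y_k$ before specializing, whereas you compute $D\frakG_u^{(\beta)}(y)$ directly.
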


\begin{proof}
This identity can be viewed as a shadow of the $y$-degree 1 component of Theorem \ref{thm:K-cauchy identity} applied to $w$:
\begin{equation}
\label{eq:K-cauchy y-deg 1}
    \sum_{\substack{
        \bfP\in \SPD^+(w)\\ \abs{P_y} = 1
    }} \wt(\bfP)
    = \sum_{
        w = s_k*v
    } \beta^{\delta_{w,v}}
    \frakG_v^{(\beta)}(x) \frakS_{s_k}(y).
\end{equation}
By specializing the $y$-variables to all be $1$ (and using that $\frakS_{s_k}(x) = x_1+\cdots+x_k$), the RHS of Equation \ref{eq:K-cauchy y-deg 1} becomes the RHS of the identity.

The LHS of Equation \ref{eq:K-cauchy y-deg 1} under this specialization becomes \begin{equation*}
    \sum_{\substack{
        \bfP\in \SPD^+(w) \\
        \abs{P_y} = 1
    }} \beta^{\abs{P_x}+1-\ell(w)} x^{P_x}
    = \sum_{\substack{
        Q\in \PD^+(w) \\
        (i,j)\in Q
    }}
    \left(\frac1{x_i} + \beta\right)\wt(Q)
\end{equation*}
which follows by noting that for each pair $(Q,(i,j))$ counted on the right, there are two super pipe dreams $\bfP = (P_x,P_y)$ counted on the left satisfying $P_x\cup P_y = Q$ and $P_y = \{(i,j)\}$ (namely, $P_x = Q\sm \{(i,j)\}$ and $P_x = Q$).
It is simple to check that the corresponding weights $\beta^{\abs{P_x}+1-\ell(w)}x^{P_x}$ of these super pipe dreams are $\frac1{x_i}\wt(Q)$ and $\beta\wt(Q)$, respectively.
Since \[
    \nabla^{(\beta)}\wt(Q) = \sum_{(i,j)\in P} \left(\frac1{x_i}+\beta\right)\wt(Q),
\] we see that the LHS of Equation \ref{eq:K-cauchy y-deg 1} specializes to the LHS of the identity.
\end{proof}

By viewing Proposition \ref{prop:K-nabla identity} as a graded piece of Theorem \ref{thm:K-cauchy identity},
our bijective proof of the latter specializes to a bijective proof of the former by restricting $\Rect$ to those $\bfP\in \SPD^+(w)$ with $\abs{P_y} = 1$.

By taking $\beta = 0$ in Proposition \ref{prop:K-nabla identity}, we recover a formula of Hamaker, Pechenik, Speyer, and Weigandt \cite{HamakerPechenikSpeyerWeigandt20} for the differential operator $
    \nabla
    := \nabla^{(0)}
    = \sum_{i\geq 1} \frac\partial{\partial x_i}
$ acting on a Schubert polynomial.
\begin{corollary}
\label{cor:nabla identity}
Let $w\in S_\infty$.
Then \[
    \nabla \frakS_w(x)
    = \sum_{
        w \doteq s_kv
    } k\cdot \frakS_v(x).
\]
\end{corollary}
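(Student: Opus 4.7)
The plan is to derive Corollary 5.2 as the $\beta = 0$ specialization of Proposition 5.1. The operator $\nabla^{(\beta)} = \sum_{i\geq 1}(1+\beta x_i)\tfrac{\partial}{\partial x_i}$ reduces to $\nabla = \sum_{i\geq 1}\tfrac{\partial}{\partial x_i}$ at $\beta = 0$, and this substitution commutes with differentiation in the sense that $[\nabla^{(\beta)} f]\vert_{\beta=0} = \nabla[f\vert_{\beta=0}]$ for any $f\in\bbZ[\beta][x]$. Since $\frakG_w^{(0)}(x)=\frakS_w(x)$, the left-hand side of Proposition 5.1 specializes to $\nabla\frakS_w(x)$.

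On the right-hand side, each summand carries a factor $\beta^{\delta_{w,v}}$, so the terms with $\delta_{w,v} = 1$ (i.e.\ $w=v$) vanish at $\beta=0$. The remaining terms are indexed by pairs $(k,v)$ with $w = s_k * v$ and $w\neq v$. I would then verify that this condition is equivalent to $w \doteq s_k v$: by the definition of the Demazure product, $s_k * v$ equals $s_k v$ when $\ell(s_k v) > \ell(v)$ and equals $v$ otherwise, so the hypotheses force $w = s_k v$ with $\ell(w) = 1+\ell(v) = \ell(s_k)+\ell(v)$. Combining these observations yields the stated identity.

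There is no real obstacle here, since the derivation is a formal specialization and the only subtlety is the bookkeeping equivalence of sum conditions noted above. As an alternative more in the spirit of this paper, one could deduce the corollary directly from Theorem 1.1 by extracting its $y$-degree 1 component and then specializing $y_j = 1$ for all $j$: the right-hand side becomes $\sum_{w \doteq s_k v} k\cdot \frakS_v(x)$ using $\frakS_{s_k}(y)\vert_{y=1} = k$, while the left-hand side becomes $\nabla \frakS_w(x)$ by the same combinatorial pairing between reduced pipe dreams $Q\in\PD_0^+(w)$ and their distinguished checkers $(i,j)\in Q$ used in the proof of Proposition 5.1.
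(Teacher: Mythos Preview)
Your proposal is correct and matches the paper's approach: the corollary is obtained by setting $\beta=0$ in Proposition~\ref{prop:K-nabla identity}, with the bookkeeping that $w=s_k*v$ and $w\neq v$ is equivalent to $w\doteq s_kv$. The paper also notes the same alternative derivation you mention, via the $y$-degree~$1$ component of Theorem~\ref{thm:cauchy identity}.
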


\noindent
Alternatively, we can view this identity as coming from the $y$-degree $1$ component of Theorem \ref{thm:cauchy identity}.\footnote{
    This observation was communicated to the author by Anna Weigandt.
} A bijective proof is obtained by restricting $\Rect$ to those $\bfP\in \SPD^+_0(w)$ with $\abs{P_y} = 1$.

A consequence of Corollary \ref{cor:nabla identity} observed in \cite{HamakerPechenikSpeyerWeigandt20} is that by applying $\nabla^{\ell(w)}$ to the Schubert polynomial $\frakS_w(x)$, we recover the following reduced word identity of Macdonald \cite{Macdonald91}:

\begin{proposition}[Macdonald's reduced word formula]
\label{prop:macdonald formula}
Let $w\in S_\infty$ and write $\calR(w)$ for the set of reduced words for $w$.
Then \[
    \sum_{\mathbf{a} \in \calR(w)} a_1\cdots a_{\ell(w)}
    = \ell(w)! \cdot \frakS_w(1,\dots,1).
\]
\end{proposition}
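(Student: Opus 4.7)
The plan is to iterate Corollary \ref{cor:nabla identity} $\ell(w)$ times and then evaluate the resulting operator at the Schubert polynomial $\frakS_w(x)$. By induction on $m\geq 1$, Corollary \ref{cor:nabla identity} yields
\[
    \nabla^m \frakS_w(x)
    = \sum_{\substack{(a_1,\dots,a_m)\in \bbN^m \\ w \doteq s_{a_1}\cdots s_{a_m} v}} a_1\cdots a_m\cdot \frakS_v(x),
\]
where the sum is over sequences together with a choice of $v\in S_\infty$ making the indicated reduced factorization valid. Setting $m = \ell(w)$ forces $\ell(v) = 0$, i.e. $v = \id$, so that $\frakS_v(x) = 1$ and the inner datum is precisely a reduced word for $w$. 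This gives
\[
    \nabla^{\ell(w)} \frakS_w(x)
    = \sum_{\bfa\in \calR(w)} a_1\cdots a_{\ell(w)}.
\]

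On the other hand, I would show by a direct expansion that for any polynomial $f\in \bbZ[x]$ homogeneous of degree $d$, one has $\nabla^d f = d!\cdot f(1,1,\dots)$. This reduces by linearity to the case of a monomial $f = x^\alpha$ with $|\alpha| = d$, and a quick count shows that
\[
    \nabla^d x^\alpha
    = \sum_{(i_1,\dots,i_d)} \partial_{x_{i_1}}\cdots \partial_{x_{i_d}} x^\alpha
    = \binom{d}{\alpha}\cdot \alpha!
    = d!.
\]
Since $\frakS_w(x)$ is homogeneous of degree $\ell(w)$, applying this with $d = \ell(w)$ yields $\nabla^{\ell(w)}\frakS_w(x) = \ell(w)!\cdot \frakS_w(1,1,\dots)$, and equating the two expressions for $\nabla^{\ell(w)}\frakS_w(x)$ produces Macdonald's formula.

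There is no real obstacle here: the only subtlety is the bookkeeping in the iteration of Corollary \ref{cor:nabla identity}, where one needs the reduced-factorization condition $w\doteq s_{a_1}\cdots s_{a_m} v$ to propagate correctly through each application of $\nabla$. This follows immediately because at each stage the condition $v \doteq s_{a_{m+1}} v'$ attached to a term $\frakS_v(x)$ composes with $w\doteq s_{a_1}\cdots s_{a_m} v$ to give $w\doteq s_{a_1}\cdots s_{a_{m+1}} v'$ by the additivity of length. Everything else is a homogeneous-degree count.
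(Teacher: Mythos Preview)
Your proof is correct and follows the same strategy as the paper: both iterate Corollary~\ref{cor:nabla identity} $\ell(w)$ times and read off the identity. The only difference is presentational---the paper phrases the iteration bijectively at the level of pipe dreams (choosing a checker to color red at each stage and applying $\Rect$), whereas you carry out the equivalent argument algebraically at the level of polynomials, computing $\nabla^{\ell(w)}\frakS_w$ two ways.
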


\begin{proof}
A bijective proof can be obtained by iterating our bijective proof of Proposition \ref{cor:nabla identity}.
Explicitly, the RHS counts the number of ways to pick a pipe dream $P\in \PD^+(w)$ and then form a chain \[
    P = P_0\to P_1\to \cdots\to P_{\ell(w)} = \varnothing
\] where for $i\in [\ell(w)]$, the link $P_{i-1}\to P_i$ is formed by choosing one of the $\ell(w)-i+1$ checkers in $P_{i-1}$ to color red and then applying $\Rect$ to get a pair $(P_i,Q_i)$ where $\partial(P_{i-1}) \doteq s_{a_i}\partial(P_i)$ and $Q_i\in \PD^+_0(s_{a_i})$.
The LHS counts the equivalent data $\bfa = (a_1,\dots,a_{\ell(w)}) \in \calR(w)$ and $(Q_1,\dots,Q_{\ell(w)}) \in \PD^+_0(s_{a_1})\times\cdots\times \PD^+_0(s_{a_{\ell(w)}})$.
\end{proof}

\subsection{A restricted descent Pieri rule and an insertion algorithm on pipe dreams}
\label{subsect:pieri rule}

Fix $m\geq 1$.
For $w\in S_\infty$, let $w\ominus_m 1$ denote the permutation given by \[
   w\ominus_m 1(i) := \begin{cases}
        w(i)+1   & 1 \leq i \leq m \\
        1        & i = m+1 \\
        w(i-1)+1 & i \geq m+2
   \end{cases}
\]
Observe that $\ell(w\ominus_m 1) = \ell(w) + m$.
In fact, if $\bfa := (a_1<\cdots<a_m)$ denotes the first $m$ values of $w$, then one can verify that $w\ominus_m 1 \doteq \partial(\bfa)w$.

Given $P\in \PD^+(w)$ we can obtain a pipe dream $P\ominus_m 1\in \PD^+(w\ominus_m 1)$ by moving all of the checkers in $P$ to the right by $1$ and then placing $m$ checkers at the top of the first column.
If $\Des(w)\sseq [m]$, then this construction is bijective.

Consider the following $\beta$-deformation of the $k$\textsuperscript{th} elementary symmetric polynomial in $m$ variables: \[
    e_k^{(\beta)}(x_1,\dots,x_m)
    := \sum_{\substack{J\sseq I\sseq[m] \\ \abs{J} = k}}
    \beta^{\abs{I}-k} x_I
    = \sum_{r=k}^m \binom{r}{k} \beta^{r-k} e_r(x_1,\dots,x_m).
\]
Here we write $x_I = \prod_{i\in I} x_i$ whenever $I\sseq [m]$.
Note that the usual elementary symmetric polynomial $e_k(x_1,\dots,x_m)$ is recovered by taking $\beta = 0$.

The next proposition provides a ``Pieri rule'' for multiplying $e_k^{(\beta)}(x_1,\dots,x_m)$ by the Grothendieck polynomial $\frakG^{(\beta)}_w(x)$ of a permutation with descents no bigger than $m$.

\begin{proposition}
\label{prop:K-pieri rule}
Let $w\in S_\infty$ be with $\Des(w)\sseq [m]$.
Then \[
    e_k^{(\beta)}(x_1,\dots,x_m) \frakG_w^{(\beta)}(x)
    = \sum_{\substack{
        v, a_1 < \cdots < a_{m-k} \\
        w\ominus_m 1 = \partial(\bfa) * v
    }}
    \beta^{\ell(v)-\ell(w)-k} \frakG_v^{(\beta)}(x).
\]
\end{proposition}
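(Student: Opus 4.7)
The plan is to deduce the Pieri rule by extracting the coefficient of $y_1^{m-k}$ from the Cauchy identity (Theorem~\ref{thm:K-cauchy identity}) applied to $w \ominus_m 1$ and specialized at $y_j = 0$ for all $j > 1$, mirroring the strategy used to derive Proposition~\ref{prop:K-nabla identity} from the $y$-degree-$1$ component of the Cauchy identity.

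First, analyse the right-hand side of the specialized Cauchy identity. The factor $\frakG^{(\beta)}_u(y_1, 0, 0, \ldots)$ is nonzero only when $u$ admits a pipe dream with all crossings in row~$1$, since the variable $y_i$ there is indexed by the row of a pipe dream for $u$. Such a row-$1$ pipe dream is uniquely determined by its column set $\{c_1 < \cdots < c_s\} \sseq \bbN$; its word $(c_s, \ldots, c_1)$ is strictly decreasing and therefore reduced, so $u^{-1} = \partial(c_1, \ldots, c_s)$. Thus $\frakG^{(\beta)}_u(y_1, 0, \ldots) = y_1^{\ell(u)}$ when $u^{-1}$ admits a strictly increasing reduced word, and vanishes otherwise. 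Extracting the coefficient of $y_1^{m-k}$ forces $\ell(u) = m-k$, identifies the contributing $u$'s with increasing sequences $\bfa = (a_1 < \cdots < a_{m-k})$ via $u^{-1} = \partial(\bfa)$, and simplifies the $\beta$-exponent $\ell(u) + \ell(v) - \ell(w \ominus_m 1)$ to $\ell(v) - \ell(w) - k$, reproducing the right-hand side of the proposition.

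Next, identify $[y_1^{m-k}]\,\frakG^{(\beta)}_{w \ominus_m 1}(x; y_1, 0, \ldots)$ with the left-hand side. Any super pipe dream $\bfP \in \SPD^+(w \ominus_m 1)$ with $P_y \sseq \{1\} \times \bbN$ has underlying pipe dream $P_x \cup P_y = P' \ominus_m 1$ for a unique $P' \in \PD^+(w)$, by the bijection stated before the proposition, whose column-$1$ crossings are precisely $\{(i, 1) : i \in [m]\}$. At each of these $m$ positions the checker configuration is one of three independent possibilities---black only, red only, or both---contributing local weights $x_i$, $y_1$, and $\beta x_i y_1$ respectively. This yields the factorization
\[
    \frakG^{(\beta)}_{w \ominus_m 1}(x; y_1, 0, \ldots) = \frakG^{(\beta)}_w(x) \prod_{i=1}^m \bigl(x_i + y_1(1 + \beta x_i)\bigr).
\]
Extracting $[y_1^{m-k}]$ amounts to choosing which $k$ of the $m$ factors contribute their $x_i$-term (the rest contributing $y_1(1 + \beta x_i)$), giving
\[
    \sum_{S \sseq [m],\, |S| = k} \Big(\prod_{i \in S} x_i\Big) \prod_{i \notin S} (1 + \beta x_i),
\]
which equals $e_k^{(\beta)}(x_1, \ldots, x_m)$ by expanding the $(1 + \beta x_i)$ factors and reindexing $(J, I) = (S, S \cup T)$ in the stated definition.

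The main obstacle is justifying the three-way local decomposition at column~$1$, which hinges on the assertion that every pipe dream of $w \ominus_m 1$ has column-$1$ crossings exactly $\{(i, 1) : i \in [m]\}$; this follows by tracing the pipe sending $m+1 \mapsto 1$, which forces a bump at $(m+1, 1)$ and a crossing at each $(i, 1)$ for $i \in [m]$, with no room for further column-$1$ crossings. In the bijective spirit of the paper, the argument can be repackaged through $\Rect$: composing the parametrization $(P', J, I) \leftrightarrow \bfP$ above with rectification sends each such $\bfP$ to a pair $(V, U) \in \PD^+(v) \times \PD^+(u)$ appearing on the right-hand side, with the increasing word $\bfa$ recovered as the column set of the unique row-$1$ pipe dream $U$.
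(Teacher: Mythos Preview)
Your proof is correct and follows essentially the same route as the paper: apply the Cauchy identity (Theorem~\ref{thm:K-cauchy identity}) to $w\ominus_m 1$, then extract the coefficient of $y_1^{m-k}$ on both sides. The paper parametrizes the contributing super pipe dreams directly by triples $(P,I,J)$ with $J\sseq I\sseq[m]$ and $|J|=k$, whereas you package the same count as the factorization $\frakG_w^{(\beta)}(x)\prod_{i=1}^m(x_i+y_1(1+\beta x_i))$ before taking the coefficient; these are two presentations of the same bijection. One small slip: you write $P_y\sseq\{1\}\times\bbN$, but since red checkers contribute $y_j$ with $j$ the \emph{column} index, the correct constraint after setting $y_j=0$ for $j>1$ is $P_y\sseq\bbN\times\{1\}$, which is indeed what you use in the next sentence.
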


\begin{proof}
We will derive this identity from Theorem \ref{thm:K-cauchy identity} applied to $w\ominus_m 1$
\begin{equation}
\label{eq:K-cauchy w-1}
    \frakG_{w\ominus_m 1}^{(\beta)}(x;y) = \sum_{
        w\ominus_m 1 = u^{-1}*v
    } \beta^{\ell(u)+\ell(v)-(\ell(w)+m)} \frakG_v^{(\beta)}(x) \frakG_u^{(\beta)}(y)
\end{equation}
by examining the $\bbZ[\beta][x]$-coefficients of the $y_1^{m-k}$ terms on each side.

We first do this for the RHS of Equation \ref{eq:K-cauchy w-1}.
If $(V,U)$ is a pair contributing to the $y_1^{m-k}$ term, then $U$ consists of $m-k$ checkers in the first row.
Hence the corresponding decomposition of $w\ominus_m 1$ looks like $\partial(\bfa)*v$ for some $\bfa = (a_1<\cdots<a_{m-k})$.
Since $U$ is necessarily reduced, the contribution of this term is $\beta^{\ell(v)-\ell(w)-k}\wt(V)y_1^{m-k}$, hence the coefficient of $y_1^{m-k}$ on the RHS of Equation \ref{eq:K-cauchy w-1} is precisely the RHS of the identity.

\begin{figure}
\begin{tikzpicture}[scale=0.45]
    \begin{shift}
    \node at (-3,-2) {$I = \{2,4,5\}$};
    \node at (-3,-4) {$J = \{2,5\}$};
    \draw[very thick] (0.5,-5.5) -- (0.5,-0.5) -- (5.5,-0.5);
    \drawRCGraph{5}{5}{
        1,1,0,1,1,
        0,1,1,0,0,
        0,0,0,0,0,
        1,0,1,1,0,
        1,0,0,0,0
    }
    \node at (3,-6.5) {$P$};
    \draw[->] (6.5,-3) -- (9.5,-3);
    \end{shift}
    
    \begin{shift}[(10,0)]
    \draw[very thick] (0.5,-5.5) -- (0.5,-0.5) -- (6.5,-0.5);
    \drawRCGraph{5}{6}{
        2,1,1,0,1,1,
        1,0,1,1,0,0,
        2,0,0,0,0,0,
        3,1,0,1,1,0,
        1,1,0,0,0,0
    }
    \node at (3.5,-6.5) {$\bfW$};
    \end{shift}
\end{tikzpicture}
\caption{
    Illustration of Proposition \ref{prop:K-pieri rule} with $w = 31648257$, $m = 5$ ($\Des(w) = \{1,3,5\}\sseq [5]$).
    The data $P\in \PD^+(w)$ and $J\sseq I\sseq [5]$ corresponds to the super pipe dream $\bfW\in \SPD^+(w\ominus_5 1)$ where $w\ominus_5 1 = 427591368$.
}
\label{fig:pieri construction}
\end{figure}
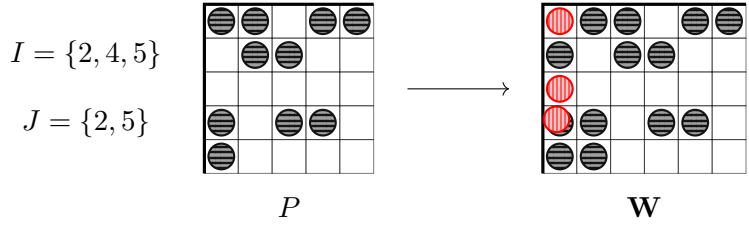

We now unpack the $y_1^{m-k}$ term on the LHS of Equation \ref{eq:K-cauchy w-1}.
The data of a super pipe dream $\bfW\in \SPD^+(w\ominus_m 1)$ contributing to this term (so $\bfW$ only has $m-k$ red checkers, all placed in the first column) is the same as the data of the pipe dream $P\in \PD^+(w)$ determining the underlying pipe dream $P\ominus_m 1$ of $\bfW$, as well as the sets $J\sseq I\sseq [m]$ where $I$ and $[m]\sm J$ encode, respectively, the rows containing black and red checkers in the first column of $\bfW$ (so $\abs{J} = k$); see Figure \ref{fig:pieri construction}.
With this notation, we have $\wt(\bfW) = \beta^{\abs{I}-k}x_Iy_1^{m-k}\wt(P)$.
This shows that the coefficient of $y_1^{m-k}$ on the LHS of Equation \ref{eq:K-cauchy w-1} is the product $e_k^{(\beta)}(x_1,\dots,x_m)\frakG_w^{(\beta)}(x)$.
\end{proof}

As with the differential identities in the previous section, the derivation of Proposition \ref{prop:K-pieri rule} from Theorem \ref{thm:K-cauchy identity} affords a bijective proof of the former through rectification.
The totality of these bijections obtained by varying $w$ and $k$ can be interpreted as constituting an ``$m$-insertion algorithm'' on the collection of pipe dreams for permutations $w$ with $\Des(w)\sseq [m]$.
We make this precise:

\begin{definition}[$m$-insertion]
\label{def:insertion}
Let $w\in S_\infty$ be with $\Des(w)\sseq [m]$.
Given $P\in \PD^+(w)$ and $J\sseq I\sseq [m]$, we can obtain a new pipe dream denoted $(I,J)\msert P$ as the output $V$ in the pair $\Rect(\bfW) = (V,U)$.
Here $\bfW$ is the super pipe dream corresponding to the triple $(P,I,J)$ as in the proof of Proposition \ref{prop:K-pieri rule} using $k = \abs{J}$.
\end{definition}

Note that by Proposition \ref{prop:K-pieri rule}, $(I,J)\msert P\in \PD^+(v)$ for some $w\ominus_m 1 = \partial(\bfa)*v$.
In particular, $v$ also satisfies $\Des(v)\sseq \Des(w \ominus_m 1) = \Des(w)\cup \{m\} \sseq [m]$.

We will use the shorthand notations $I\msert P$ for $(I,I)\msert P$ and $i\msert P$ for $\{i\}\msert P$.
Notice that $\wt((I,J)\msert P) = \beta^{|I|-|J|} x_I \wt(P)$.
In particular, $I\msert P$ is reduced whenever $P$ is reduced.

Setting $\beta=0$ in Proposition \ref{prop:K-pieri rule} recovers a Pieri rule for Schubert polynomials.
\begin{corollary}
\label{cor:pieri rule}
Let $w\in S_\infty$ be with $\Des(w)\sseq [m]$.
Then \[
    e_k(x_1,\dots,x_m) \frakS_w(x)
    = \sum_{\substack{
        v, a_1 < \cdots < a_{m-k} \\
        w\ominus_m 1 \doteq \partial(\bfa) v
    }}
    \frakS_v(x).
\]
\end{corollary}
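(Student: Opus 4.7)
The most direct plan is to obtain the corollary by specializing $\beta = 0$ in Proposition \ref{prop:K-pieri rule}. Under this specialization, $e_k^{(\beta)}(x_1,\dots,x_m)$ reduces to $e_k(x_1,\dots,x_m)$ (all terms with $\abs{I} > k$ in its defining sum carry a positive power of $\beta$), and $\frakG_w^{(\beta)}(x)$ reduces to $\frakS_w(x)$. On the right-hand side, the factor $\beta^{\ell(v)-\ell(w)-k}$ kills every summand except those with $\ell(v) = \ell(w) + k$. Since $\ell(w \ominus_m 1) = \ell(w) + m$ and $\ell(\partial(\bfa)) = m - k$ for an increasing sequence $\bfa = (a_1 < \cdots < a_{m-k})$, the condition $w \ominus_m 1 = \partial(\bfa) * v$ together with $\ell(v) = \ell(w) + k$ yields $\ell(\partial(\bfa)) + \ell(v) = \ell(w \ominus_m 1)$, which upgrades the Demazure product to an honest reduced product, i.e. $w \ominus_m 1 \doteq \partial(\bfa) v$.

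Alternatively, one can mirror the proof of Proposition \ref{prop:K-pieri rule} with Theorem \ref{thm:cauchy identity} replacing Theorem \ref{thm:K-cauchy identity}. Apply the Cauchy identity to $w \ominus_m 1$ and equate the coefficients of $y_1^{m-k}$ on both sides. By Propositions \ref{prop:rectification preserves ordinary} and \ref{prop:rectification preserves reducedness}, the map $\Rect$ restricts to a weight-preserving bijection on reduced ordinary super pipe dreams, so in the reduced setting the black and red checker positions in the first column of $\bfW \in \SPD^+_0(w \ominus_m 1)$ must be disjoint. This forces $I = J$ in the parametrization from Figure \ref{fig:pieri construction}, so the LHS coefficient is exactly $e_k(x_1,\dots,x_m)\frakS_w(x)$, while the RHS coefficient is the sum displayed in the statement. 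This route supplies the bijective proof in the spirit of the rest of the section.

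The primary technical point is the length bookkeeping that promotes $*$ to $\doteq$ after specialization; once that is confirmed, both approaches reduce to rewriting the specialized data. A minor check is that the descent condition $\Des(v) \sseq [m]$ (needed for $w \ominus_m 1$ to make sense on the recursive step) is automatic on the surviving terms because $\Des(w \ominus_m 1) \sseq \Des(w) \cup \{m\} \sseq [m]$ and $v$ arises from a reduced factorization. With these observations in place, no further obstacle remains.
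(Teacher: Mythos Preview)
Your proposal is correct and matches the paper's approach exactly: the paper obtains the corollary by setting $\beta=0$ in Proposition~\ref{prop:K-pieri rule}, and then notes the same alternative route via the $y_1^{m-k}$ coefficient in Theorem~\ref{thm:cauchy identity} applied to $w\ominus_m 1$, with the bijective proof coming from restricting $\Rect$ to reduced $\bfW$ with $m-k$ red checkers in the first column. Your length bookkeeping promoting $*$ to $\doteq$ is the right justification for why the specialization yields the stated indexing set.
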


\noindent
Alternatively, this identity follows by comparing the $y_1^{m-k}$ terms in Theorem \ref{thm:cauchy identity} applied to $w\ominus_m 1$.
The resulting bijective proof is obtained by restricting $\Rect$ to those $\bfW\in \SPD_0^+(w\ominus_m 1)$ with $m-k$ red checkers all in the first column.
The collection of these bijections corresponds to $m$-insertions of the form $I\msert P$ where $P$ is reduced.

We will study this insertion algorithm further in Section \ref{sect:RSK}.

\subsection{An identity of Stanley symmetric functions}
\label{subsect:Stanley symmetric}

Although corectification does not furnish a proof of the Cauchy identities (cf. Proposition \ref{prop:rectification preserves ordinary}), there are situations where it can have utility.
We will see one such instance in this section.

Say that a pipe dream $P\in \PD$ is \textit{stable} if $P\sseq \bbN\times \bbZ$.
Write $\PD^{\downarrow}$ for the collection of stable pipe dreams.

\begin{definition}[\cite{Stanley84}]
\label{def:stanley symmetric}
Let $w\in S_\infty$.
The \textit{Stanley symmetric function} and \textit{$K$-Stanley symmetric function}\footnote{
    These are also called \textit{stable Schubert/Grothendieck polynomials}, hence the terminology for stable pipe dreams.
} of $w$ are the formal power series \begin{align*}
    F_w(x) &= \sum_{P\in \PD^\downarrow_0(w)} \wt(P), &
    G_w^{(\beta)}(x) &= \sum_{P\in \PD^\downarrow(w)} \wt(P).
\end{align*}
\end{definition}

Rectification and corectification can both be used to obtain the following identity of $K$-Stanley symmetric functions.

\begin{figure}
\begin{tikzpicture}[scale=0.42]
    % DIAGRAM 1
    \begin{shift}
    \draw[very thick] (0.5,-0.5) -- (6.5,-0.5);
    \draw[very thick] (2.5,-0.5) -- (2.5,-4.5);
    \drawRCGraph{4}{6}{
        9,9,1,1,0,0,
        9,0,1,0,1,0,
        0,0,0,0,0,0,
        1,1,0,0,0,1
    }
    \node at (3.5,-5.5) {$V$};
    
    \begin{shift}[(7,0)]
    \draw[very thick] (0.5,-0.5) -- (1.5,-0.5);
    \draw[very thick] (0.5,-0.5) -- (0.5,-4.5);
    \drawRCGraph{4}{1}{
        0,
        0,
        0,
        2
    }
    \node at (1,-5.5) {\color{y color 1}$P_4^\dagger$};
    \end{shift}
    \end{shift}
    
    % DIAGRAM 2
    \draw[<-] (9,-2.5) -- (12,-2.5)
    node[midway,above] {$\coRect$};
    
    \begin{shift}[(12,0)]
    \draw[very thick] (0.5,-0.5) -- (6.5,-0.5);
    \draw[very thick] (2.5,-0.5) -- (2.5,-4.5);
    \drawRCGraph{4}{6}{
        9,9,1,1,0,0,
        9,0,1,0,1,0,
        0,0,2,0,0,0,
        1,0,1,0,0,1
    }
    \node at (3.5,-5.5) {$\bfW$};
    \end{shift}

    \draw[->] (19,-2.5) -- (22,-2.5)
    node[midway,above] {$\Rect$};
    
    % DIAGRAM 3
    \begin{shift}[(22,0)]
    \draw[very thick] (0.5,-0.5) -- (6.5,-0.5);
    \draw[very thick] (2.5,-0.5) -- (2.5,-4.5);
    \drawRCGraph{4}{6}{
        9,9,1,1,0,0,
        9,0,0,1,1,0,
        0,0,0,0,0,0,
        1,0,1,0,0,1
    }
    \node at (3.5,-5.5) {$U$};
    
    \begin{shift}[(7,0)]
    \draw[very thick] (0.5,-0.5) -- (1.5,-0.5);
    \draw[very thick] (0.5,-0.5) -- (0.5,-4.5);
    \drawRCGraph{4}{1}{
        2,
        0,
        0,
        0
    }
    \node at (1,-5.5) {\color{y color 1}$P_1^\dagger$};
    \end{shift}
    \end{shift}
\end{tikzpicture}
\caption{
    Illustration of Proposition \ref{prop:stable Groth} with $w = 35241687$.
    Here $V\in \PD^\downarrow(ws_4)$ is paired with $U\in \PD^\downarrow(s_1w)$ since there is a super pipe dream $\bfW\in \SPD(w)$ with $\coRect(\bfW) = (V,P_4)$ and $\Rect(\bfW) = (U,P_1)$.
}
\label{fig:stanley}
\end{figure}
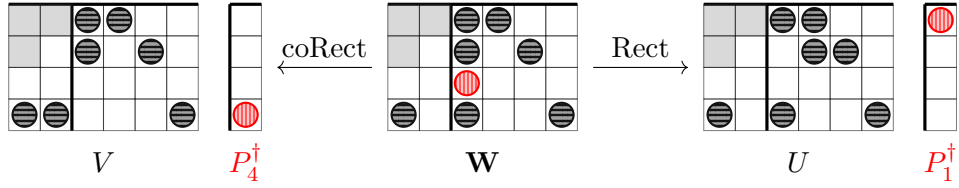

\begin{proposition}
\label{prop:stable Groth}
Let $w\in S_\infty$.
Then \[
    \sum_{w=s_k*v}
    \beta^{\delta_{w,v}} G_v^{(\beta)}(x)
    = \sum_{w=u*s_k}
    \beta^{\delta_{w,u}} G_u^{(\beta)}(x).
\]
\end{proposition}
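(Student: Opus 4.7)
The plan is to apply both pipe dream rectification and corectification to a common set of stable super pipe dreams for $w$, thereby equating two different computations of the same generating function with the LHS and RHS of the identity respectively.

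Let $\SPD^{\downarrow}_{\bullet}(w)$ denote the subset of stable super pipe dreams $\bfW \in \SPD^{\downarrow}(w)$ whose red checker set consists of a single element lying in column $1$ (equivalently, $\wt(\bfW)$ has $y$-weight $y_1$). I would first verify that $\Rect$ and $\coRect$ restrict to weight-preserving bijections
\[
    \Rect \colon \SPD^{\downarrow}_{\bullet}(w) \overset{\sim}{\longrightarrow} \coprod_{w = s_k * v} \PD^{\downarrow}(v)
    \quad \text{and} \quad
    \coRect \colon \SPD^{\downarrow}_{\bullet}(w) \overset{\sim}{\longrightarrow} \coprod_{w = u * s_k} \PD^{\downarrow}(u).
\]
For the $\Rect$-map, the weight identity $\wt(\bfW) = \beta^{\delta_{w,v}} \wt(V) \wt(U^{\dagger})$ from Proposition \ref{prop:rectification}, combined with $\wt(\bfW)$ having $y$-weight $y_1$, forces $\wt(U^{\dagger}) = y_1$; since $U$ is a single-checker pipe dream for $s_k$, this pins down $U = \{(1, k)\}$ uniquely. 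The $V$-part remains stable because the $Y^+$-flow only permutes black checkers within their rows, in direct analogy with Proposition \ref{prop:rectification preserves ordinary}. The reasoning for $\coRect$ is the same except it yields the decomposition $w = v * s_k$ (which we relabel to $w = u * s_k$ on the codomain), matching the RHS summation.

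Having the bijections, specializing $y_1 \mapsto 1$ in the weight identity gives $\wt(\bfW)|_{y_1 = 1} = \beta^{\delta_{w,v}} \wt(V)$ for each $\bfW \in \SPD^{\downarrow}_{\bullet}(w)$. Summing yields
\[
    \sum_{\bfW \in \SPD^{\downarrow}_{\bullet}(w)} \wt(\bfW)\big|_{y_1 = 1}
    = \sum_{w = s_k * v} \beta^{\delta_{w,v}} G_v^{(\beta)}(x)
\]
through the $\Rect$-bijection, which is the LHS of Proposition \ref{prop:stable Groth}; the analogous sum via $\coRect$ produces the RHS. Equating the two computations of the same sum completes the proof.

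The main obstacle is verifying surjectivity of the restricted $\Rect$ and, more delicately, of the restricted $\coRect$: given LHS data $(V, k)$, one must show that inverting $\Rect$ on the pair $(V, \{(1, k)\})$ returns a stable super pipe dream---i.e., that the inverse flows bring the red checker into row $\geq 1$. This reduces to a stable analog of Proposition \ref{prop:rectification preserves ordinary}, which in turn should follow from an inductive tracking of row indices under $Y^\pm$-flow, parallel to (but distinct from) Lemma \ref{lem:Y+ preserves ordinary} in the ordinary case. The corresponding verification for $\coRect$ requires more care since $\coRect$ does not preserve ordinariness in general (cf.\ Figure \ref{fig:corectification}), but the restriction to the column-$1$ subset $\SPD^{\downarrow}_{\bullet}(w)$ should rule out the pathologies appearing there.
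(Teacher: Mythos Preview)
Your approach is correct and coincides with the paper's: both pass a pair $(V,P_k)$ through the composite of $\Rect$ and $\coRect^{-1}$ (in one order or the other), using that black-row distributions---hence stability of the $V$-component---are preserved throughout. The paper simply suppresses your intermediate set $\SPD^{\downarrow}_{\bullet}(w)$ and works directly with the composite map $V \mapsto U$.

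The ``main obstacle'' you flag is in fact a non-issue. You have already argued (via column-preservation of $Y^{\pm}$ on red checkers, equivalently via the $y$-weight) that the single red checker of $\bfW = \Rect^{-1}(V, P_k)$ lies in column $1$. But by Definition~\ref{def:pipe dreams} every checker of a pipe dream lies in $\calH = \{(i,j) : i+j-1 \geq 1\}$, so a checker in column $1$ automatically has row $\geq 1$. Hence $\bfW$ is stable with no further work, and the identical argument handles $\coRect^{-1}$. No stable analogue of Lemma~\ref{lem:Y+ preserves ordinary} is needed.
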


\begin{proof}
For $k\in\bbN$, write $P_k$ for the pipe dream $\{(1,k)\}\in \PD_0^+(s_k)$.
Given $V\in \PD^\downarrow(v)$ for some $w = s_k*v$, by Propositions \ref{prop:rectification} and \ref{prop:corectification}, the pair $\Rect(\coRect^{-1}(V,P_k))$ will have the form $(U,P_{k'})$ where $U\in \PD(u)$ for some $w = u*s_{k'}$ with $\wt(V) = \wt(U)$.
Mapping $V\mapsto U$ gives a weight-preserving bijection \[
    \coprod_{w=s_k*v} \PD^\downarrow(v)
    \overset\sim\longrightarrow
    \coprod_{w=s_k*u} \PD^\downarrow(u).
    \qedhere
\]
\end{proof}

As usual, taking $\beta = 0$ in Proposition \ref{prop:stable Groth} recovers an identity for Stanley symmetric functions.

\begin{corollary}
\label{cor:Stanley symmetric}
Let $w\in S_\infty$.
Then \[
    \sum_{w\doteq s_kv}
    F_v(x)
    = \sum_{w\doteq us_k}
    F_u(x).
\]
\end{corollary}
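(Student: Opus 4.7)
The plan is to derive this corollary as the $\beta \to 0$ limit of Proposition \ref{prop:stable Groth}, mirroring how Corollaries \ref{cor:nabla identity} and \ref{cor:pieri rule} were obtained from their $K$-theoretic counterparts.

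Two routine observations drive the specialization. First, for any $v \in S_\infty$, the weight $\wt(P) = \beta^{|P|-\ell(v)}\prod_{(i,j)\in P} x_i$ vanishes at $\beta = 0$ unless $P$ is reduced, so $G_v^{(0)}(x) = F_v(x)$. Second, on each side of Proposition \ref{prop:stable Groth} the factor $\beta^{\delta_{w,v}}$ kills the summands with $w = v$ (resp.\ $w = u$); the surviving indexing condition ``$w = s_k * v$ with $w \neq v$'' unpacks in the Demazure algebra to mean $w = s_k v$ and $\ell(w) = \ell(v) + 1$, i.e., $w \doteq s_k v$, since $s_k * v$ equals either $s_k v$ (when $\ell(s_k v) = \ell(v)+1$) or $v$ (otherwise). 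The analogous statement on the RHS becomes $w \doteq u s_k$. Substituting these simplifications into Proposition \ref{prop:stable Groth} and collecting the surviving $\beta^0 = 1$ terms yields the claim.

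For a bijective rendering of the argument, I would restrict the bijection constructed in the proof of Proposition \ref{prop:stable Groth}. Given a reduced stable $V \in \PD^\downarrow_0(v)$ with $w \doteq s_k v$, form $\bfW := \coRect^{-1}(V, P_k)$ and then $(U, P_{k'}) := \Rect(\bfW)$. The only point to verify is that $U$ is automatically reduced; by Proposition \ref{prop:rectification preserves reducedness} applied both to $\coRect^{-1}$ and to $\Rect$, this reduces to noting that the single-crossing pipe dream $P_k \in \PD^+_0(s_k)$ is reduced and that $\ell(w) = \ell(v) + 1 = \ell(u) + 1$, forcing first $\bfW$ and then $U$ to be reduced with $w \doteq u s_{k'}$. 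The main (mild) obstacle is this bookkeeping of lengths through the two Demazure products; everything else is a direct specialization of already established results.
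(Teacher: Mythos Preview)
Your proposal is correct and follows exactly the approach the paper takes: the paper's entire justification is the sentence ``As usual, taking $\beta = 0$ in Proposition \ref{prop:stable Groth} recovers an identity for Stanley symmetric functions,'' and you have simply unpacked what that specialization entails. Your added bijective rendering, via Proposition \ref{prop:rectification preserves reducedness} applied to both $\coRect^{-1}$ and $\Rect$, is not stated explicitly in the paper for this corollary but is the natural restriction of the bijection in Proposition \ref{prop:stable Groth} to reduced pipe dreams, entirely in keeping with how the paper handles the other $\beta=0$ corollaries.
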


\subsection{A recurrence for double Grothendieck polynomials}
\label{subsect:R_k recurrence}

Let $w\in S_\infty$.
Recall that we constructed a bijection for each $k\in\bbZ$ \[
    X^+_{\geq k}\colon \SPD(w)
    \overset\sim\longrightarrow
    \left\{\begin{tabular}{@{} c @{}}
        $\bfQ\in \SPD(w)$ \\
        with no black checkers \\
        in row $k$
    \end{tabular}\right\}
\] with inverse $X^-_{\geq k+1}$ (extended to all of $\SPD(w)$ by sending each $\bfP$ with a black checker in row $k$ to $0$).
Write $R_k$ for the endomorphism of $\bbZ[\beta][x;y]$ corresponding to $X^-_{\geq k+1}$.
Explicitly, $R_k$ acts by the evaluations $x_k\mapsto 0$ and $x_{i+1} \mapsto x_i$ for $i\geq k$.
These are the \textit{Bergeron--Sottile operators} of \cite{BergeronSottile98}.

Recently, Nadeau, Spink, and Tewari used the $R_k$ operators to give an elementary recursion for Schubert polynomials \cite{NadeauSpinkTewari24}.
The following proposition shows that there is a natural extension of this recursion for double Grothendieck polynomials.

\begin{proposition}
\label{prop:R_k Groth recursion}
Let $w\in S_\infty$.
Then \[
    \frakG_w^{(\beta)}(x;y)
    = R_1\big(\frakG_w^{(\beta)}(x;y)\big)
    + \sum_{\substack{
        u,k \\
        w = u*s_k
    }} \beta^{\delta_{w,u}} x_k \cdot R_k\big(\frakG_u^{(\beta)}(x;y)\big).
\]
\end{proposition}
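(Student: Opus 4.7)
The plan is to prove this recurrence in two steps: first reduce it to the single-variable identity
\[\frakG_w^{(\beta)}(x) = R_1 \frakG_w^{(\beta)}(x) + \sum_{u,k:\, w=u*s_k} \beta^{\delta_{w,u}} x_k R_k \frakG_u^{(\beta)}(x)\]
via the Cauchy identity (Theorem \ref{thm:K-cauchy identity}), and then prove the single-variable version combinatorially through the pipe dream expansion.

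For the Cauchy reduction, expand the proposed LHS $\frakG_w^{(\beta)}(x;y)$ as $\sum_{w=u^{-1}*v} \beta^{\ell(u)+\ell(v)-\ell(w)} \frakG_v^{(\beta)}(x) \frakG_u^{(\beta)}(y)$ and substitute the single-variable recurrence into each $\frakG_v^{(\beta)}(x)$. Separately, each double Grothendieck $\frakG_{u''}^{(\beta)}(x;y)$ appearing on the proposed RHS likewise expands via Cauchy. Associativity in the Demazure algebra, $u^{-1}*(v'*s_k) = (u^{-1}*v')*s_k$, then provides a bijection between the two resulting index sets (via $u'' = u^{-1}*v'$ and $v = v'*s_k$), and a four-case check based on whether $k$ is a descent of $v'$ and/or $u''$ verifies that the $\beta$-exponents agree in each case.

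For the single-variable identity, I would use the pipe dream expansion $\frakG_w^{(\beta)}(x) = \sum_{P \in \PD^+(w)} \wt(P)$ and decompose $\PD^+(w)$ according to the content of row 1. If $P$ has no checker in row 1, associate it with a contribution to $R_1 \frakG_w^{(\beta)}(x)$; otherwise, let $k$ be the column of a distinguished row-1 checker (for example, the leftmost), and associate $P$ with a remainder pipe dream $Q \in \PD^+(u)$ obtained by removing the row-1 checker at column $k$ and applying the flow $X^+_{\geq k}$ so that $Q$ has no checkers in row $k$. Here $u$ is determined by $w = u*s_k$ in the Demazure algebra, and the weight $\wt(P)$ should match $\beta^{\delta_{w,u}} x_k R_k \wt(Q)$.

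The main obstacle is the single-variable step, specifically the Demazure-product bookkeeping. Removal of the distinguished row-1 checker can either reduce the length of the permutation (yielding $u = ws_k$ with $\ell(u) = \ell(w)-1$ and $\delta_{w,u} = 0$) or preserve it when $k \in \Des(u)$ (yielding $u = w$ and $\delta_{w,u} = 1$, giving a $\beta$ correction). Distinguishing these cases cleanly---and verifying that the flow operator $X^+_{\geq k}$ correctly realizes the $R_k$ shift on $x$-weights while tracking the new permutation $u$---is the technical heart of the argument and relies on the preservation properties of flow operators developed in Sections \ref{sect:flow ops} and \ref{sect:flow symmetry}.
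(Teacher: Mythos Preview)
Your Cauchy reduction (Step 1) is sound: expanding both sides via Theorem~\ref{thm:K-cauchy identity} and matching terms indexed by triples $(u,v',k)$ with $w = u^{-1}*v'*s_k$ works, and the $\beta$-exponent check is even simpler than you suggest---in every case both exponents equal $\ell(u)+\ell(v')-\ell(w)+1$, so no four-case split is needed. This is a legitimate alternative to the paper's route, which bypasses Cauchy entirely and builds a single bijection directly on $\SPD^+(w)$.

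Step 2, however, has a concrete error: you key the bijection on \emph{row}~$1$, but a checker at $(1,k)$ contributes $x_1$ to $\wt(P)$, not $x_k$. The factor $x_k$ on the right-hand side is a \emph{row} weight, so the distinguished checker must live in row~$k$. The way to arrange both the permutation piece $s_k$ and the weight piece $x_k$ simultaneously is to look at \emph{column}~$1$: a checker at $(k,1)$ contributes $s_k$ to the word (it is the last letter in reading order among column-1 checkers below it) and $x_k$ to the weight. The paper does exactly this, though phrased via flow: apply $X^+_{\geq 1}$; if the result leaves $\SPD^+$, find the largest $k$ for which $X^+_{\geq k}\bfP$ is extraordinary, which happens precisely when a black checker sits at $(k,1)$ in $\bfP$ and lands at $(k+1,0)$ after flowing. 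Deleting that checker yields $\bfQ\in\SPD^+(u,k)$ with the correct weight $\beta^{\delta_{w,u}}x_k\,R_k(\wt(\bfQ))=\wt(\bfP)$.

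Your ``row~1 empty'' case has the same defect: sending $P$ with empty row~$1$ to itself gives $R_1(\wt(P))\neq\wt(P)$, so this is not the identity on weights. Once you switch to column~$1$ and use the flow $X^+$ (which for single pipe dreams is just $\sigma$), the bijection works---but at that point you have essentially reproduced the paper's argument restricted to $P_y=\varnothing$, with an extra Cauchy layer on top.
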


\begin{proof}
For $w\in S_\infty$ and $k\in\bbN$, let $\SPD^+(w,k)$ be temporary notation for the collection of super pipe dreams $\bfQ\in \SPD^+(w)$ with no black checkers in row $k$.
Notice that each $R_k(\frakG_w^{(\beta)}(x;y))$ can be interpreted as the sum of $R_k(\bfQ)$ indexed over $\bfQ\in \SPD^+(w,k)$.

The RHS of the recurrence is thus a weighted enumeration of the set \[
    \SPD^+(w,1)
    \amalg
    \coprod_{w = u*s_k}
    \SPD^+(u,k)
\] where each $\bfQ\in \SPD^+(w,1)$ gets weight $R_1(\wt(\bfQ))$ and each $\bfQ\in \SPD^+(u,k)$ gets weight $\beta^{\delta_{w,u}}x_k\cdot R_k(\wt(\bfQ))$.
We will give a weight-preserving bijection between $\SPD^+(w)$ and the above set.

Let $\bfP\in \SPD^+(w)$.
If $X^+_{\geq 1}\bfP$ ($= X^+\bfP$) is ordinary, then send $\bfP$ to $X^+_{\geq 1}\bfP$.
Notice that $R_1(\wt(X^+_{\geq 1}\bfP)) = \wt(X^-_{\geq 2}X^+_{\geq 1}\bfP) = \wt(\bfP)$.

Otherwise, let $k\in\bbN$ be maximal such that $X^+_{\geq k}\bfP$ is not ordinary.
Since $X^+_{\geq k+1}\bfP$ is ordinary, the reason for $X^+_{\geq k}\bfP$ being extraordinary must be a black checker at position $(k+1,0)$.
By removing this checker from $X^+_{\geq k}\bfP$, we obtain an ordinary super pipe dream $\bfQ\in \SPD^+(u,k)$ for some $u\in S_\infty$ with $w = u*s_k$.
Send $\bfP$ to $\bfQ$.
Notice that $
    \beta^{\delta_{w,u}}x_{k+1}\cdot \wt(\bfQ)
    = \wt(X^+_{\geq k}\bfP)$.
Applying $R_k$ to this equality gives $
    \beta^{\delta_{w,u}}x_k\cdot \wt(\bfQ) 
    = R_k(\wt(X^+_{\geq k}\bfP))
    = \wt(X^-_{\geq k+1}X^+_{\geq k}\bfP)
    = \wt(\bfP)
$.

\begin{figure}
\begin{tikzpicture}[scale=0.45]
    % FIRST LINE
    \begin{shift}[(5,0)]
    \draw[very thick] (0.5,-5.5) -- (0.5,-0.5) -- (5.5,-0.5);
    \drawRCGraph{5}{5}{
        1,0,1,2,3,
        2,2,0,0,0,
        0,3,1,0,0,
        0,1,0,2,3,
        0,0,0,0,0
    }
    \drawNorthLabels{0}{1}{1,2,3,4,5}
    \drawWestLabels{1}{0}{1,2,3,4,5}
    \node at (3,-6.5) {$\bfP$};
    \draw[->] (6,-3) -- (8,-3)
    node[midway,above] {$X^+$};
    \end{shift}

    \begin{shift}[(13,0)]
    \draw[very thick] (0.5,-5.5) -- (0.5,-0.5) -- (5.5,-0.5);
    \drawRCGraph{5}{5}{
        2,0,0,0,2,
        1,3,1,2,0,
        0,0,0,0,0,
        1,3,0,0,2,
        1,0,1,2,0
    }
    \node at (3,-6.5) {$\bfQ$};
    \end{shift}

    % SECOND LINE
    \begin{shift}[(0,-8)]
    \draw[very thick] (0.5,-5.5) -- (0.5,-0.5) -- (5.5,-0.5);
    \drawRCGraph{5}{5}{
        1,0,3,0,3,
        2,1,3,0,0,
        0,2,1,0,0,
        0,1,0,0,1,
        0,0,2,0,0
    }
    \drawNorthLabels{0}{1}{1,2,3,4,5}
    \drawWestLabels{1}{0}{1,2,3,4,5}
    \node at (3,-6.5) {$\bfP$};
    \draw[->] (6,-3) -- (8,-3)
    node[midway,above] {$X^+_{\geq2}$};
    \end{shift}

    \begin{shift}[(8,-8)]
    \draw[very thick] (1.5,-0.5) -- (1.5,-5.5);
    \draw[very thick] (0.5,-0.5) -- (6.5,-0.5);
    \drawRCGraph{5}{6}{
        9,1,0,3,0,3,
        0,0,0,2,0,0,
        1,2,1,0,0,0,
        0,1,2,0,0,0,
        0,1,0,2,1,0
    }
    \draw[->] (7,-3) -- (9,-3);
    \node at (8,-1.7) {\tiny delete};
    \node at (8,-2.3) {\tiny $(3,0)$};
    \end{shift}
    
    \begin{shift}[(17,-8)]
    \draw[very thick] (0.5,-5.5) -- (0.5,-0.5) -- (5.5,-0.5);
    \drawRCGraph{5}{5}{
        1,0,3,0,3,
        0,0,2,0,0,
        2,1,0,0,0,
        1,2,0,0,0,
        1,0,2,1,0
    }
    \node at (3,-6.5) {$\bfQ$};
    \end{shift}
\end{tikzpicture}

\caption{
    Illustration of Proposition \ref{prop:R_k Groth recursion} for $w = 261543978$.
    In the top line, $X^+\bfP$ is ordinary, so $\bfP$ is sent to $\bfQ = X^+\bfP$.
    In the bottom line, $X^+_{\geq2}\bfP$ is not ordinary, so $\bfP$ is sent to a $\bfQ\in \SPD^+(ws_2,2)$.
}
\label{fig:R_k recursion}
\end{figure}

It is routine to verify that this assignment is bijective.
\end{proof}

Here is the version of Proposition \ref{prop:R_k Groth recursion} for double Schubert polynomials, which follows either by taking $\beta = 0$ or by restricting the above bijection to $\PD^+_0(w)$:

\begin{corollary}
\label{cor:R_k Schub recursion}
Let $w\in S_\infty$.
Then \[
    \frakS_w(x,y)
    = R_1\left(\frakS_w(x,y)\right)
    + \sum_{
        w \doteq us_k
    } x_k \cdot R_k \left(\frakS_u(x,y)\right).
\]
\end{corollary}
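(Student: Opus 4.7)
The plan is to deduce Corollary \ref{cor:R_k Schub recursion} directly from Proposition \ref{prop:R_k Groth recursion} by setting $\beta = 0$. Since $\frakS_w(x;y) = \frakG^{(0)}_w(x;y)$ by definition, the LHS of the Grothendieck recursion becomes the LHS of the Schubert recursion. The Bergeron--Sottile operator $R_k$ acts on $\bbZ[\beta][x;y]$ purely by evaluations on the $x$-variables ($x_k \mapsto 0$ and $x_{i+1} \mapsto x_i$ for $i \geq k$), so $R_k$ commutes with the ring map $\beta \mapsto 0$. In particular, the $R_1(\frakG_w^{(\beta)}(x;y))$ term specializes to $R_1(\frakS_w(x;y))$, and each $R_k(\frakG_u^{(\beta)}(x;y))$ specializes to $R_k(\frakS_u(x;y))$.

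The only substantive step is repackaging the Demazure-algebra index set. I would split the pairs $(u,k)$ with $w = u * s_k$ into two classes: either $\ell(us_k) = \ell(u) + 1$, in which case $u * s_k = us_k$ in the Demazure algebra and hence $w \doteq us_k$ with $\delta_{w,u} = 0$; or $\ell(us_k) = \ell(u) - 1$, in which case $u * s_k = u$, forcing $w = u$ with $\delta_{w,u} = 1$. The second class of terms carries a factor $\beta^1$ and so vanishes upon the specialization $\beta = 0$, while the first class survives unchanged, producing precisely the sum $\sum_{w \doteq us_k} x_k \cdot R_k(\frakS_u(x;y))$ in the statement.

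There is no genuine obstacle here beyond bookkeeping; the heavy lifting was carried out in the proof of Proposition \ref{prop:R_k Groth recursion}. If one prefers a self-contained combinatorial argument, the same conclusion follows by restricting the bijection from that proof to $\SPD_0^+(w)$: the flow operator $X^+$ preserves the permutation and the total crossing count, so it sends reduced super pipe dreams to reduced super pipe dreams, and peeling off a single black checker at $(k+1,0)$ drops the crossing count by exactly $1$, matching the length drop $\ell(w) = \ell(u) + 1$ that characterizes $w \doteq us_k$. Either route yields the corollary immediately.
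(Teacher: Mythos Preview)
Your proposal is correct and matches the paper's own approach exactly: the paper states that the corollary ``follows either by taking $\beta = 0$ or by restricting the above bijection to $\PD^+_0(w)$,'' and you carry out both routes with the appropriate bookkeeping on the Demazure index set. Your observation that the $\delta_{w,u}=1$ terms carry a factor of $\beta$ and hence vanish is precisely the mechanism the paper leaves implicit.
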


\noindent
\textit{Remark:}\quad Our bijective proof of Corollary \ref{cor:R_k Schub recursion} is essentially the same as the one given in \cite{NadeauSpinkTewari24} for single Schubert polynomials (the specialization $y = 0$).
Precisely, if $\bfP\mapsto \bfQ$ by our bijection and $\Rect(\bfP) = (V,U)$, then $\Rect(\bfQ) = (V',U)$ where $V\mapsto V'$ under their bijection.
\vspace{4pt}

\section{A dual RSK correspondence through pipe dreams}
\label{sect:RSK}

The \textit{dual Robinson--Schensted--Knuth (RSK) correspondence} is a bijection between rectangular binary matrices and pairs of semi-standard Young tableaux of conjugate shape.
Both of these objects can identified with certain classes of pipe dreams: semi-standard Young tableau are known to be equinumerous with reduced pipe dreams for Grassmannian permutations and binary matrices naturally encode super pipe dreams for biGrassmannian permutations.
We thus can obtain a variant of dual RSK directly in terms of pipe dreams though rectification.
In this section, we will examine the relationship between the classical dual RSK correspondence and our pipe dream variant.

All pipe dreams $P$ in this section will be reduced and visualized as wiring diagrams rather than as checker arrangements.
The reduced hypothesis means that no two pipes in $P$ may cross more than once.
When we speak of ``pipe $k$'' in $P$, we are referring to the pipe appearing $k$\textsuperscript{th} from the top in any column strictly west of those containing crosses.
We use the same conventions for super pipe dreams $\bfP$.

\subsection{Grassmannian permutations}
\label{subsect:grass permutations}

We begin by giving the necessary background on Grassmannian permutations and their reduced pipe dreams.

\begin{definition}
Let $m\in\bbN$.
A permutation $w\in S_\infty$ is \textit{($m$-)Grassmannian} if one of the following equivalent conditions hold:
\begin{itemize}
    \item
    $\code(w) = (\lambda_m,\dots,\lambda_1,0,\dots)$ for a partition $\lambda = (\lambda_1\geq \cdots\geq \lambda_m)$.
    \item 
    $\Des(w)$ is contained in $\{m\}$.
\end{itemize}
In this case, we write $w = \Grass(\lambda,m)$ (see Figure \ref{fig:grassmannian pipe dream})
\end{definition}

Let $w = \Grass(\lambda,m)$ be a Grassmannian permutation.
It is well-known that $\frakS_w(x) = s_\lambda(x_1,\dots,x_m)$, the \textit{Schur polynomial} for $\lambda$ in $m$ variables (cf. Proposition \ref{prop:grassmannian}(i)).
We will view $s_\lambda(x_1,\dots,x_m)$ as a generating function for the following family of tableaux:

\begin{definition}
\label{def:rev ssyt}
Let $\lambda$ be a partition.
A \textit{reverse semi-standard Young tableau} (or \textit{rev-tableau}) of shape $\lambda$ is a filling $T\colon Y(\lambda)\to \bbZ$ which is weakly decreasing along rows and strictly decreasing down columns.
Here $Y(\lambda) = \{(i,j) \mid j \leq \lambda_i\}$ is the \textit{diagram} of $\lambda$.
\end{definition}

\noindent
Writing $\RSSYT(\lambda)$ for the set of all rev-tableaux of shape $\lambda$ and $\RSSYT(\lambda,m)$ for the subset of the former taking values in $[m]$, we have \[
    s_\lambda(x_1,\dots,x_m) =
    \sum_{T\in \RSSYT(\lambda,m)} \wt(T)
\] where $\wt(T) := \prod_{(i,j)\in Y(\lambda)} x_{T(i,j)}$.

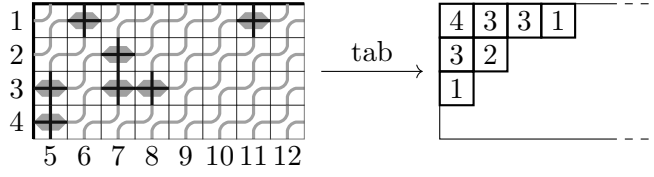
\begin{figure}
\begin{tikzpicture}[scale=0.45]
    % DIAGRAM 1
    \begin{shift}
    \draw[very thick] (0.5,-4.5) -- (0.5,-0.5) -- (8.5,-0.5);
    \drawPD{8}{8}{
        0,1,0,0,0,0,1,0,
        0,0,1,0,0,0,0,0,
        1,0,1,1,0,0,0,0,
        1,0,0,0,0,0,0,0
    }
    \drawNorthLabels{5}{1}{5,6,7,8,9,10,11,12}
    \drawWestLabels{1}{0}{1,2,3,4}

    \draw[->] (9,-2.5) -- (12,-2.5)
    node[midway,above] {$\tab$};
    \end{shift}

    % DIAGRAM 2
    \begin{shift}[(12,0)]
    % \draw[very thick] (0.5,-4.5) -- (0.5,-0.5) -- (5.5,-0.5);
    \foreach \x\y\z in {
    %   1     2     3     4
        1/1/4,1/2/3,1/3/3,1/4/1,
        2/1/3,2/2/2,
        3/1/1
    }{
        \node at (\y,-\x) {\z};
        \draw[thick] (\y-0.5,-\x+0.5) rectangle (\y+0.5,-\x-0.5);
    }
    \draw (5.5,-4.5) -- (0.5,-4.5) -- (0.5,-0.5) -- (5.5,-0.5);
    \draw[dashed] (5.5,-0.5) -- (7,-0.5);
    \draw[dashed] (5.5,-4.5) -- (7,-4.5);
    \end{shift}
\end{tikzpicture}
    
\caption{
    A pipe dream $P\in \PD_0^+(w)$ and the rev-tableau $\tab(P)$ for $w = 13582467 = \Grass(\lambda,4)$ where $\lambda = (4,2,1,0)$.
    The numbers along the border of $P$ indicate the labels of the pipes.
}
\label{fig:grassmannian pipe dream}
\end{figure}

\begin{proposition}
\label{prop:grassmannian}
Let $w = \Grass(\lambda,m)$ be a Grassmannian permutation.
\begin{itemize}
\item[(i)]
There is a weight-preserving bijection \[
    \tab\colon \PD_0(w) \overset\sim\longrightarrow \RSSYT(\lambda)
\] which maps $\PD_0^+(w)$ onto $\RSSYT(\lambda,m)$.
\item[(ii)]
If $P\in \PD_0(w)$ contains a ladder $L$ where a ladder move can be made, then $L$ is a $2$-ladder.
\end{itemize}
\end{proposition}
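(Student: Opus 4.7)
The plan is to prove (i) by explicitly constructing $\tab$ as a diagonal-to-antidiagonal matching, and to prove (ii) via a short pipe-tracing argument inside the ladder.

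For (i), I would define $\tab$ by the rule that a cross at $(i,j)\in P$ corresponds to a box of $T=\tab(P)$ with value $i$ on the diagonal $c-r=i+j-m-1$ of $Y(\lambda)$. The crucial input is the classical fact that Grassmannian permutations avoid the pattern $321$ and are therefore fully commutative: all reduced words for $w=\Grass(\lambda,m)$ are related by commutation moves alone, so they share the same multiset of letters. Translated into pipe dreams, the number of crosses on each antidiagonal $i+j=d$ of any $P\in\PD_0(w)$ equals the number of boxes of $Y(\lambda)$ on the diagonal $c-r=d-m-1$, so I can match them up by ordering crosses on each antidiagonal by row descending and boxes on each diagonal by row ascending. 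I would then verify in turn that $T$ satisfies the rev-SSYT monotonicity conditions (the main technical step), that the inverse assignment sending a box $(r,c)$ with value $v$ to a cross at $(v,\,c-r+m+1-v)$ lands inside $\PD_0(w)$, that weights correspond since each cross at row $i$ contributes $x_i$ to $\wt(P)$ just as its matched box does to $\wt(T)$, and that the positivity correspondence $\PD_0^+(w)\leftrightarrow\RSSYT(\lambda,m)$ follows by comparing the cross-column inequality $c-r+m+1-v\ge 1$ with the row-decrease constraint of the tableau.

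For (ii), I would argue by contradiction. Suppose $L$ is a big $k$-ladder with $k\ge 3$ in some $P\in\PD_0(w)$ where a ladder move can be made, and WLOG assume $L$ has the $P$-configuration from Figure \ref{fig:ladder move}. Tracing pipes inside $L$ uncovers three distinguished pipes: a pipe $A$ ascending the left column of $L$ and exiting out the top-right corner, a pipe $B$ entering from the west at the bottom row and wrapping around the interior to exit east at the top row, and (thanks to $k\ge 3$) a middle-row pipe $W$ passing horizontally through both columns. The three pairs $A\times W$, $B\times W$, and $A\times B$ each cross inside $L$, so each forms an inversion of $w$. For a Grassmannian $w$ with sole descent at $m$, every inversion pair mixes a pipe of label $\le m$ with one of label $>m$, yet no $2$-coloring of three pipes can make all three pairs mixed --- a contradiction. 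The symmetric $Q$-configuration is handled the same way. The main subtlety is justifying the pipe-class labeling for a potentially non-positive pipe dream, which I would resolve by invoking the general principle that pipe crossings in any reduced pipe dream biject canonically with inversions of the underlying permutation.
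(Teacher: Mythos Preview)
Your proposal is correct and in fact more detailed than what the paper does. The paper's ``proof'' is minimal: for (ii) it simply cites \cite{Gao21}, and for (i) it only records the construction of $\tab$ via pipe-tracing (``the labels in row $i$ of $\tab(P)$ are the row indices of the crosses that pipe $m-i+1$ passes horizontally through'') without verifying bijectivity. So the comparison is really between your self-contained arguments and the paper's deferral to the literature.

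For (i), your antidiagonal-to-diagonal matching is a different \emph{description} of the map than the paper's pipe-tracing definition, and one can check on examples (e.g.\ Figure~\ref{fig:grassmannian pipe dream}) that the two agree: the cross at $(i,j)$ lands at the box $(a,b)$ with $b-a=i+j-m-1$ and value $i$ under either rule. Your route through full commutativity of $321$-avoiding permutations is a clean way to see that the antidiagonal cross-counts are rigid, and has the advantage of making the inverse map explicit without tracking pipes. The paper's description, on the other hand, is what gets used later in Section~\ref{sect:RSK} (e.g.\ in the proof of Proposition~\ref{prop:grass pipe dream jdt}), so if you adopt your formulation you should check that the pipe-tracing interpretation is recoverable from it.

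For (ii), your argument is a nice direct proof and is exactly the kind of reasoning behind the cited result: three pairwise-crossing pipes in a reduced pipe dream yield a $321$-pattern in $w$, which a Grassmannian permutation cannot contain. One small point: you should note explicitly that the $R$-configuration of Figure~\ref{fig:ladder move} cannot occur in a reduced pipe dream (the two pipes through the SW and NE corners cross twice), so only the $P$- and $Q$-configurations need to be handled, as you do.
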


\begin{proof}
A proof of (ii) can be found in \cite{Gao21}.

For (i), we only provide the construction.
Given $P\in \PD_0(w)$, define $\tab(P)$ to be the rev-tableau whose labels in row $i\in[m]$ are the row indices of the crosses that pipe $m-i+1$ passes (horizontally) through in $P$, both read from left-to-right; see Figure \ref{fig:grassmannian pipe dream} for an example.
By definition, this correspondence is weight preserving.

Equivalently, $\tab(P)$ is the tableau whose labels in column $j\in\bbN$, from top-to-bottom, are the row indices of the crosses that pipe $m+j$ passes (vertically) through in $P$ read from bottom-to-top.
\end{proof}

For $m,n\geq 1$, there is a unique $m$-Grassmannian permutation whose inverse is $n$-Grassmannian, namely $b(m,n) := \Grass((n,\dots,n),m)$.
Such permutations are called \textit{biGrassmannian}.

We close this section by collecting some results on biGrassmannian permutations.
Recall that the \textit{conjugate} (or \textit{transpose}) of a partition $\lambda$ is the partition $\lambda^t$ determined by $Y(\lambda^t) = Y(\lambda)^t$.

\begin{proposition}
\label{prop:bigrassmannian}
Let $b(m,n)$ be a biGrassmannian permutation.
\begin{itemize}
\item[(i)]
$\PD_0^+(b(m,n))$ consists of a single pipe dream, namely $[m]\times [n]$.
\item[(ii)]
If $b(m,n) \doteq u^{-1}v$, then there is a partition $\lambda\sseq [m]\times [n]$ such that $v = \Grass(\lambda,m)$ and $u = \Grass(\lambda^\dagger,n)$ where\footnote{
    Notice that we suppress $m$ and $n$ in the notation for $\lambda^\dagger$.
} \begin{align*}
    \lambda^\dagger 
    := (n-\lambda_m+1,\dots,n-\lambda_1+1)^t
    = (m-\lambda^t_n+1,\dots,m-\lambda^t_1+1).
\end{align*}
\end{itemize}
\end{proposition}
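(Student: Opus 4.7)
For (i), I would apply Proposition 7.1(i), which gives a bijection $\PD_0^+(b(m,n)) \overset\sim\to \RSSYT((n^m), m)$. A reverse semistandard Young tableau of rectangular shape $(n^m)$ with entries in $[m]$ has $m$ strictly decreasing columns with values drawn from the $m$-element set $[m]$, forcing each column to read $(m, m{-}1, \dots, 1)$ top-to-bottom; thus the tableau is unique. Unpacking it via the column-wise description of $\tab$ given in the proof of Proposition 7.1(i), the $j$-th column records that pipe $m+j$ crosses rows $1, \dots, m$ vertically, pinning the pipe dream down as exactly $[m] \times [n]$.

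For (ii), the plan is to compute $\frakS_{b(m,n)}(x;y)$ in two ways and extract the claimed bijection via linear independence of Schubert polynomials. First, (i) implies every reduced super pipe dream for $b(m,n)$ is a $2$-coloring of the unique underlying pipe dream $[m]\times[n]$, so summing weights over subsets $T \sseq [m]\times[n]$ gives
\[
\frakS_{b(m,n)}(x;y) = \sum_{T\sseq[m]\times[n]} \prod_{(i,j)\in T} x_i \cdot \prod_{(i,j) \in [m]\times[n]\sm T} y_j = \prod_{(i,j) \in [m]\times[n]}(x_i + y_j).
\]
This product lies in $\bbZ[x_1,\dots,x_m, y_1,\dots,y_n]$, is symmetric separately in $x_1,\dots,x_m$ and in $y_1,\dots,y_n$, and has $x_i$-degree at most $n$ (and $y_j$-degree at most $m$). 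On the other hand, Theorem 1.1 expands the same polynomial as $\sum_{b(m,n) \doteq u^{-1}v} \frakS_v(x)\frakS_u(y)$. Since the tensor products $\{\frakS_v \otimes \frakS_u\}$ are $\bbZ$-linearly independent in $\bbZ[x] \otimes \bbZ[y]$, each summand individually inherits the support, separate-symmetry, and degree constraints. The combination of $\frakS_v \in \bbZ[x_1,\dots,x_m]$ (equivalent to $\Des(v) \sseq [m]$) and symmetry in $x_1,\dots,x_m$ (equivalent to $\partial_i \frakS_v = 0$ for all $i \in [m{-}1]$, i.e.\ $\Des(v) \cap [m{-}1] = \varnothing$) pins down $\Des(v) \sseq \{m\}$, so $v = \Grass(\lambda, m)$ with $\lambda \sseq [m]\times[n]$ (the degree bound giving $\lambda_1 \leq n$). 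Symmetrically, $u = \Grass(\mu, n)$ for some $\mu \sseq [n]\times[m]$.

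It remains to identify $\mu$ with $\lambda^\dagger$. For this I would invoke the classical dual Schur--Cauchy identity
\[
\prod_{i \in [m], j \in [n]}(x_i + y_j) = \sum_{\lambda \sseq [m]\times[n]} s_\lambda(x_1,\dots,x_m) \, s_{\lambda^\dagger}(y_1,\dots,y_n),
\]
and match term-by-term with the Schubert expansion, using $\frakS_{\Grass(\lambda, m)}(x) = s_\lambda(x_1, \dots, x_m)$. The main obstacle is this appeal to the classical dual Schur identity, which the paper does not explicitly prove; a self-contained version would essentially recover the dual RSK correspondence developed in Section 8 as a byproduct, so one could alternatively structure the proof of (ii) as a direct analysis of how $\Rect$ acts on $[m]\times[n]$-colorings, deriving Proposition 7.4 and (the Schur instance of) dual RSK in parallel.
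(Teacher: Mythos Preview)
The paper states this proposition without proof, so there is no in-paper argument to compare against. Your treatment of (i) via Proposition~\ref{prop:grassmannian}(i) is correct and is presumably what the authors had in mind.

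For (ii) your argument is valid but takes a genuine detour. The step where you deduce $\Des(v)\subseteq\{m\}$ from symmetry and support constraints does go through: applying $\partial_i^x$ for $i<m$ to the Cauchy expansion and using that $\{\frakS_{v'}\otimes\frakS_u\}$ is a basis forces every contributing $v$ to satisfy $i\notin\Des(v)$, and the support bound $\frakS_v\in\bbZ[x_1,\dots,x_m]$ is equivalent to $\Des(v)\subseteq[m]$. The final matching $\mu=\lambda^\dagger$ via the finite dual Schur--Cauchy identity is also correct as stated. The difficulty you flag is real, though: within the paper's logical flow this borders on circular, since Section~\ref{sect:RSK} invokes Proposition~\ref{prop:bigrassmannian}(ii) precisely to set up the bijection that yields that identity.

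A much shorter self-contained route exists. For the Grassmannian constraint: if $w\doteq u^{-1}v$ and $i\in\Des(v)$, then $\ell(ws_i)\le\ell(u)+\ell(vs_i)=\ell(w)-1$, so $\Des(v)\subseteq\Des(b(m,n))=\{m\}$; the same applied to $w^{-1}\doteq v^{-1}u$ gives $\Des(u)\subseteq\{n\}$. For the shape identification: from $u^{-1}v=b(m,n)$ one reads off $u(n+i)=v(i)$ for $i\in[m]$, hence $\{u(1),\dots,u(n)\}=[m+n]\setminus\{v(1),\dots,v(m)\}$. Since $v(i)=i+\lambda_{m+1-i}$, this complement in $[m+n]$ is exactly the value set of $\Grass(\lambda^\dagger,n)$, which pins down $u$. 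This two-line computation is almost certainly the routine verification the paper suppresses, and it avoids importing the Schur identity.
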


\subsection{Insertion for reduced Grassmannian pipe dreams}

Suppose $P\in \PD_0(w)$ for some $w = \Grass(\lambda,m)$, and let $I\sseq [m]$.
Using the insertion algorithm from Section \ref{subsect:pieri rule}, we can form a new pipe dream $I\msert P\in \PD^+(v)$ for some $v\in S_\infty$.
By the discussion after Definition \ref{def:insertion}, $v$ is also $m$-Grassmannian, so we can consider the rev-tableau $\tab(I\msert P)$.

\begin{figure}
\begin{tikzpicture}[scale=0.4]
    % TOP DIAGRAM
    \begin{shift}[(4,7)]
    \draw[very thick] (0.5,-4.5) -- (0.5,-0.5) -- (8.5,-0.5);
    \drawPD{4}{8}{
        2,0,1,0,0,0,0,1,
        2,0,0,1,0,0,0,0,
        1,1,0,1,1,0,0,0,
        2,1,0,0,0,0,0,0
    }
    \end{shift}

    % TOP-LEFT DIAGRAM
    \begin{shift}[(0,0)]
    % super pipe dream
    \draw[very thick] (0.5,-0.5) -- (12.5,-0.5);
    \draw[very thick] (4.5,-0.5) -- (4.5,-5.5);
    \drawPD{4}{12}{
        9,9,9,8,0,0,1,0,0,0,0,1,
        9,9,8,0,0,0,0,1,0,0,0,0,
        9,8,1,0,0,1,0,1,1,0,0,0,
        8,0,0,0,0,1,0,0,0,0,0,0
    }
    \begin{shift}[(0,-4)]\drawPD{1}{5}{0,0,0,0,0}\end{shift}
    \begin{shift}[(0,-5)]\drawPD{1}{4}{2,0,0,0}\end{shift}
    \begin{shift}[(0,-6)]\drawPD{1}{3}{2,0,0}\end{shift}
    \begin{shift}[(0,-7)]\drawPD{1}{2}{2,0}\end{shift}
    \begin{shift}[(0,-8)]\drawPD{1}{1}{0}\end{shift}

    % tableau
    \begin{shift}[(7,-5)]
    % - red cells
    \foreach \x\y\z in {1/1/8,2/1/7,3/1/6}{
        \fill[color=y color 2] (\y-0.5,-\x+0.5) rectangle (\y+0.5,-\x-0.5);
        \node at (\y,-\x) {\color{y color 1}\z};
        \draw[thick] (\y-0.5,-\x+0.5) rectangle (\y+0.5,-\x-0.5);
    }
    % - black cells
    \foreach \x\y\z in {
              1/2/4,1/3/3,1/4/3,1/5/1,
              2/2/3,2/3/2,
              3/2/1,
        4/1/3
    }{
        \node at (\y,-\x) {\z};
        \draw[thick] (\y-0.5,-\x+0.5) rectangle (\y+0.5,-\x-0.5);
    }
    \end{shift}
    
    \node (A) at (14,-2.5) {$\bfW_0$};
    \node (B) at (15,4.5) {$\bfW = \bfW_4$};
    \node (C) at (20,-2.5) {$\bfW_9$};

    \draw[->] (B) -- (C) node[midway,above right] {\scalebox{0.7}{$(Y^+)^5$}};
    \draw[->] (A) -- (C) node[midway,below] {\scalebox{0.7}{$(Y^+)^9$}};;
    
    \draw[->] (6.5,0) -- (6.5,2) node[midway,left] {\scalebox{0.7}{$(Y^+)^4$}};
    
    \draw[->] (6.5,-6) -- (6.5,-12) node[near end,left] {\scalebox{0.7}{$Y'_5\cdots Y'_{-3}$}};
    \end{shift}

    % BOTTOM-LEFT DIAGRAM
    \begin{shift}[(0,-17)]
    % super pipe dream
    \draw[very thick] (0.5,-0.5) -- (12.5,-0.5);
    \draw[very thick] (4.5,4.5) -- (4.5,-5.5);
    \begin{shift}[(4,5)]\drawPD{5}{8}{
                9,9,9,9,8,2,0,0,
                9,9,9,8,0,0,0,0,
                9,9,8,0,0,0,0,0,
                9,8,0,0,0,0,0,0,
                8,0,0,0,0,0,0,0
    }\end{shift}
    \drawPD{4}{12}{
        9,9,9,8,0,0,1,0,0,0,0,1,
        9,9,8,0,0,0,0,1,0,0,0,0,
        9,8,0,1,0,1,0,1,1,0,0,0,
        8,0,0,0,0,1,0,0,0,0,0,0
    }
    \begin{shift}[(0,-4)]\drawPD{1}{5}{0,0,0,0,0}\end{shift}
    \begin{shift}[(0,-5)]\drawPD{1}{4}{0,0,0,0}\end{shift}
    \begin{shift}[(0,-6)]\drawPD{1}{3}{2,0,0}\end{shift}
    \begin{shift}[(0,-7)]\drawPD{1}{2}{2,0}\end{shift}
    \begin{shift}[(0,-8)]\drawPD{1}{1}{0}\end{shift}

    % tableau
    \begin{shift}[(7,-5)]
    % - red cells
    \foreach \x\y\z in {1/1/8,2/1/7,4/1/-4}{
        \fill[color=y color 2] (\y-0.5,-\x+0.5) rectangle (\y+0.5,-\x-0.5);
        \node at (\y,-\x) {\color{y color 1}\z};
        \draw[thick] (\y-0.5,-\x+0.5) rectangle (\y+0.5,-\x-0.5);
    }
    % - black cells
    \foreach \x\y\z in {
              1/2/4,1/3/3,1/4/3,1/5/1,
              2/2/3,2/3/2,
        3/1/3,3/2/1
    }{
        \node at (\y,-\x) {\z};
        \draw[thick] (\y-0.5,-\x+0.5) rectangle (\y+0.5,-\x-0.5);
    }
    \end{shift}
    
    \draw[->] (13,-2.5) -- (17,-2.5) node[midway,above] {\scalebox{0.7}{$Y'_5\cdots Y'_{-3}$}};
    \end{shift}

    % BOTTOM-RIGHT DIAGRAM
    \begin{shift}[(17,-17)]
    % super pipe dream
    \draw[very thick] (0.5,-0.5) -- (12.5,-0.5);
    \draw[very thick] (4.5,4.5) -- (4.5,-5.5);
    \begin{shift}[(4,5)]\drawPD{5}{8}{
                9,9,9,9,8,2,0,0,
                9,9,9,8,0,0,0,0,
                9,9,8,0,0,2,0,0,
                9,8,0,0,0,0,0,0,
                8,0,0,0,0,0,0,0
    }\end{shift}
    \drawPD{4}{12}{
        9,9,9,8,0,1,0,0,0,0,0,1,
        9,9,8,0,0,0,0,1,0,0,0,0,
        9,8,0,0,1,1,0,1,1,0,0,0,
        8,0,0,0,0,1,0,0,0,0,0,0
    }
    \begin{shift}[(0,-4)]\drawPD{1}{5}{0,0,0,0,0}\end{shift}
    \begin{shift}[(0,-5)]\drawPD{1}{4}{0,0,0,0}\end{shift}
    \begin{shift}[(0,-6)]\drawPD{1}{3}{0,0,0}\end{shift}
    \begin{shift}[(0,-7)]\drawPD{1}{2}{2,0}\end{shift}
    \begin{shift}[(0,-8)]\drawPD{1}{1}{0}\end{shift}

    % tableau
    \begin{shift}[(7,-5)]
    % - red cells
    \foreach \x\y\z in {1/1/8,3/2/-2,4/1/-4}{
        \fill[color=y color 2] (\y-0.5,-\x+0.5) rectangle (\y+0.5,-\x-0.5);
        \node at (\y,-\x) {\color{y color 1}\z};
        \draw[thick] (\y-0.5,-\x+0.5) rectangle (\y+0.5,-\x-0.5);
    }
    % - black cells
    \foreach \x\y\z in {
              1/2/4,1/3/3,1/4/3,1/5/1,
        2/1/3,2/2/3,2/3/2,
        3/1/1
    }{
        \node at (\y,-\x) {\z};
        \draw[thick] (\y-0.5,-\x+0.5) rectangle (\y+0.5,-\x-0.5);
    }
    \end{shift}
    \end{shift}

    % TOP-RIGHT DIAGRAM
    \begin{shift}[(17,0)]
    % super pipe dream
    \draw[very thick] (4.5,-0.5) -- (12.5,-0.5);
    \draw[very thick] (4.5,4.5) -- (4.5,-4.5);
    \begin{shift}[(4,5)]
    \drawPD{9}{8}{
        9,9,9,9,8,2,0,0,
        9,9,9,8,0,0,0,0,
        9,9,8,0,0,2,0,0,
        9,8,0,0,0,0,0,0,
        8,0,0,0,0,2,0,0,
        0,1,0,0,0,0,0,1,
        0,0,1,0,0,0,0,0,
        1,0,1,1,1,0,0,0,
        1,0,0,0,0,0,0,0
    }
    \end{shift}

    % tableau
    \begin{shift}[(7,-5)]
    % - red cells
    \foreach \x\y\z in {2/3/0,3/2/-2,4/1/-4}{
        \fill[color=y color 2] (\y-0.5,-\x+0.5) rectangle (\y+0.5,-\x-0.5);
        \node at (\y,-\x) {\color{y color 1}\z};
        \draw[thick] (\y-0.5,-\x+0.5) rectangle (\y+0.5,-\x-0.5);
    }
    % - black cells
    \foreach \x\y\z in {
        1/1/4,1/2/3,1/3/3,1/4/3,1/5/1,
        2/1/3,2/2/2,
        3/1/1
    }{
        \node at (\y,-\x) {\z};
        \draw[thick] (\y-0.5,-\x+0.5) rectangle (\y+0.5,-\x-0.5);
    }
    \end{shift}
    
    \draw[<-] (6.5,-6) -- (6.5,-12) node[near end,right] {\scalebox{0.7}{$Y'_5\cdots Y'_{-3}$}};
    \end{shift}
     
\end{tikzpicture}

\caption{
    Insertion of $I = \{3\}$ into the pipe dream $P$ from Figure \ref{fig:grassmannian pipe dream}.
    The output $3\overset4\to P$ consists of the black crosses in $\bfW_9 = (Y^+)^9\bfW_0 = (Y^+)^5\bfW$, where $\bfW\in \SPD^+(w\ominus_4 1)$ corresponds to the triple $(P,I,I)$ as in the proof of Proposition \ref{prop:K-pieri rule}.
    The bottom route shows $\bfW_9$ computed as $(Y'_5\cdots Y'_{-3})^3\bfW_0$ with the tableau at each step shown.
}
\label{fig:grassmannian insertion}
\end{figure}
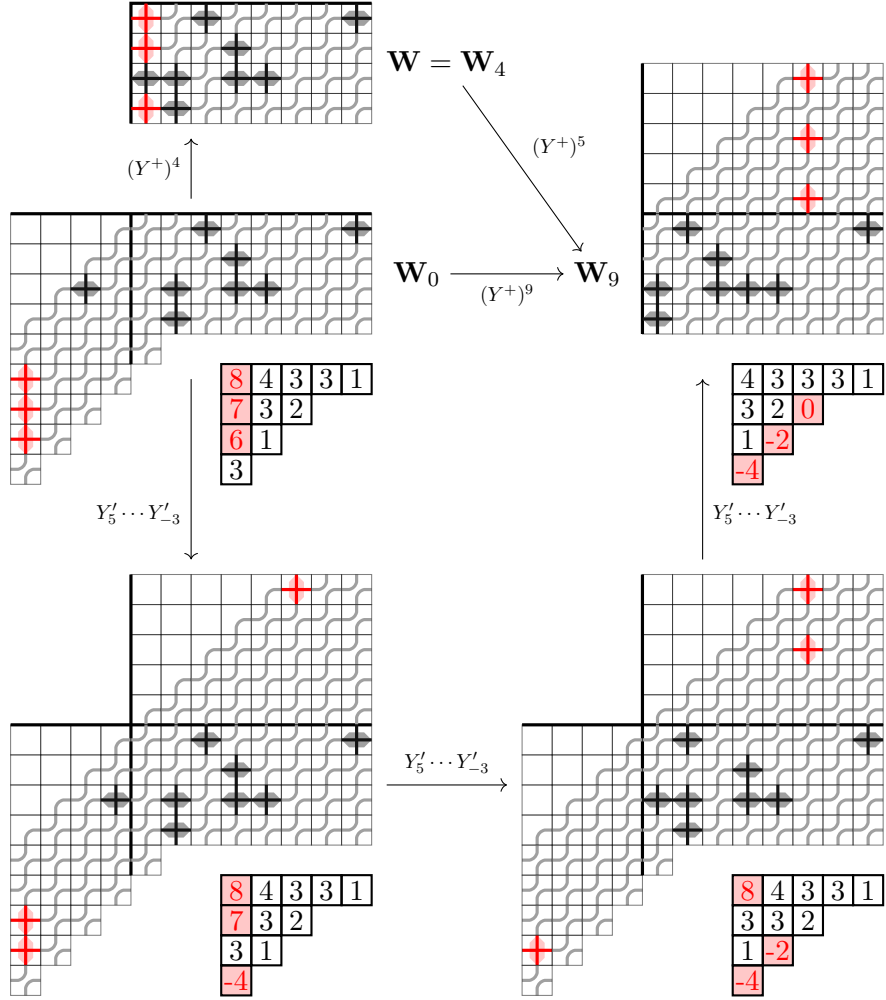

The goal of this section is to show that $\tab(I\msert P)$ can be obtained as the product\footnote{
    The product as SSYT in the alphabet $\bbZ$ with the opposite ordering.
} $I*\tab(P)$ in the tableaux monoid (Proposition \ref{prop:insertion agrees}), where we identify $I$ with the column tableau filled with the elements of $I$.
We defer the precise definition of the product and other operations on Young tableaux (e.g. jeu de taquin slides) to \cite{Fulton97}.
The key is the following lemma:

\begin{lemma}
\label{lem:grassmannian insertion}
Let $w$ be a Grassmannian permutation and fix $j\in\bbZ$.
Suppose $\bfP\in \SPD_0(w)$ has at least $k$ red crosses in column $j$.

\begin{itemize}
    \item[(i)]
    If $\bfP\in \SPD_0(w)$ has no red crosses in column $j+1$, then for all $k\geq 1$, \[
        Y'_j(Y'_{j+1})^{k-1}(Y'_j)^{k-1}\bfP = (Y'_{j+1})^{k-1}(Y'_j)^k\bfP.
    \]
    
    \item[(ii)]
    If $\bfP\in \SPD_0(w)$ has no red crosses in columns $j+1,\dots,j+n$ for some $n\geq 1$, then for all $k\geq 1$, \[
        (Y'_{j+n-1})^k\cdots (Y'_j)^k\bfP
        = (Y'_{j+n-1}\cdots Y'_j)^k\bfP.
    \]
\end{itemize}
\end{lemma}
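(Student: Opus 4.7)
The plan is to prove (i) by induction on $k$, then deduce (ii) from (i) via a bubble-sort argument using the commutativity $Y'_a Y'_b = Y'_b Y'_a$ for $|a - b| \geq 2$ (which is immediate, since such operators act on disjoint column pairs).

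For part (i), the base case $k = 1$ is trivial. For $k \geq 2$, let $(i_1, j), \ldots, (i_k, j)$ denote the top $k$ red crosses in column $j$ of $\bfP$ with $i_1 < \cdots < i_k$. Each $Y'_j$ modifies only rows weakly above the topmost red cross of column $j$ on which it acts, and similarly for $Y'_{j+1}$; a short induction shows that neither $(Y'_j)^{k-1}$ nor the subsequent $(Y'_{j+1})^{k-1}$ touches row $i_k$, since the topmost red crosses encountered during these operations all lie at rows strictly below $i_k$ (in the row-index sense, i.e., strictly above $i_k$). Hence both $\bfQ_{\mathrm{LHS}} := (Y'_{j+1})^{k-1}(Y'_j)^{k-1}\bfP$ and $(Y'_j)^{k-1}\bfP$ retain a red cross at $(i_k, j)$ and agree with $\bfP$ at $(i_k, j+1)$. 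The final $Y'_j$ on the LHS and the $k$-th $Y'_j$ on the RHS therefore act in the same case ($\blacklozenge$ or $\lozenge$) on the red at $(i_k, j)$; in case $\lozenge$ they produce big ladders with the same SW and NE corners, since these corners are determined by column $j$, which matches between the two intermediate pipe dreams (as $(Y'_{j+1})^{k-1}$ does not touch column $j$).

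In case $\blacklozenge$, the action modifies only row $i_k$, and equality of the two sides follows by observing that $(Y'_{j+1})^{k-1}$ acts identically on $(Y'_j)^k\bfP$ and $(Y'_j)^{k-1}\bfP$ outside row $i_k$ (they coincide there). In case $\lozenge$, the core technical claim is that the intermediate-row modifications of the big ladder on the LHS match the combined effect of the big-ladder modifications and the subsequent $(Y'_{j+1})^{k-1}$ on the RHS. Here Proposition~\ref{prop:grassmannian}(ii) is essential: since all ladder moves in reduced Grassmannian pipe dreams are restricted to 2-ladders, the local configurations at intermediate rows of the big ladder fall into a tractable set of cases. A case analysis at each intermediate row $r$---based on whether $(r,j+1)$ contains a red cross in $(Y'_j)^{k-1}\bfP$ (which has been shifted to column $j+2$ in $\bfQ_{\mathrm{LHS}}$)---then matches up the black checker positions on both sides. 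The main obstacle will be this bookkeeping, as the interleaved operators redistribute black crosses in ways that must cancel precisely; the 2-ladder restriction from Proposition~\ref{prop:grassmannian}(ii) is what makes the cancellation manageable.

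For part (ii), I use a bubble-sort on the operator word $(Y'_{j+n-1} \cdots Y'_j)^k$ to collect all $Y'_j$ factors at the right (i.e., applied first), then all $Y'_{j+1}$ factors, and so on, ultimately reaching the LHS $(Y'_{j+n-1})^k \cdots (Y'_j)^k$. To move a $Y'_j$ from a later cycle of the expansion past an intervening operator, I commute past $Y'_{j+t}$ for $t \geq 2$ and apply (i) to rewrite an adjacency $Y'_j Y'_{j+1} Y'_j$ as $Y'_{j+1} (Y'_j)^2$. Iterating produces $(Y'_{j+n-1} \cdots Y'_{j+1})^k (Y'_j)^k \bfP$, after which the same procedure with $j$ replaced by $j+1$ (and $n$ by $n-1$) handles the remaining factors recursively. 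At each invocation of (i), the required hypothesis (that the current pipe dream has no red crosses in the relevant column) holds by tracking the flow of red crosses induced by the preceding operators, together with the initial hypothesis that $\bfP$ has no red crosses in columns $j+1, \ldots, j+n$.
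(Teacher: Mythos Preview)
Your plan for (ii) is essentially the paper's argument: the paper inducts on $k$, applies $Y'_{j+n-1}\cdots Y'_j$ to both sides of the inductive hypothesis, commutes far-apart $Y'$ operators, and invokes (i) a total of $n-1$ times to reach the sorted form. Your bubble-sort is the same rewriting read in the other direction.

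For (i), however, you are taking a much harder road than necessary and you leave the hard step undone. The case analysis on the intermediate rows of the big ladder in case $\lozenge$ is exactly where your argument needs content, and ``the 2-ladder restriction makes the cancellation manageable'' is not a proof. The paper avoids this bookkeeping entirely by proving a stronger disjointness statement than the one you set up. You only argue that $(Y'_j)^{k-1}$ and $(Y'_{j+1})^{k-1}$ avoid the single row $i_k$; the paper instead shows that if $i'_\ell$ denotes the top row of the ladder $L_\ell$ created by the $\ell$-th application of $Y'_j$, then $i'_1 < i'_2 < \cdots < i'_k$. This is precisely where Proposition~\ref{prop:grassmannian}(ii) enters: since ladder moves in reduced Grassmannian pipe dreams are $2$-ladders, each $L_\ell$ has no crosses in column $j+1$, hence the red cross deposited at $(i'_{\ell-1},j+1)$ by the previous step must lie strictly above $L_\ell$.

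Once you know $i'_{k-1} < i'_k$, the lemma is immediate: $(Y'_{j+1})^{k-1}$ applied to $(Y'_j)^{k-1}\bfP$ only touches rows at or above $i'_{k-1}$, while the $k$-th $Y'_j$ only touches rows between $i'_k$ and $i_k$. These row ranges are disjoint, so the two operations commute outright, and no intermediate-row case analysis is required. Your framing of ``induction on $k$'' is also a red herring for (i); you never invoke an inductive hypothesis, and the paper's argument is a direct commutation.
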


\begin{proof}
Without loss of generality, we may assume that $j = 1$ for both statements.

We first show (i), so assume $\bfP$ has no red crosses in column $2$.
Write $i_1< i_2< \cdots$ for the rows with red crosses in column $1$ of $\bfP$.
For each $\ell\in [k-1]$, write $L_\ell$ for the ladder formed when applying $Y'_1$ to $(Y'_1)^{\ell-1}\bfP$.
By Proposition \ref{prop:grassmannian}(ii), $L_\ell$ can only contain crosses in column $1$ of $(Y'_1)^{\ell-1}\bfP$.
In particular, $(i_\ell,1)$ is the only red cross in $(Y'_1)^{\ell-1}\bfP$ appearing in $L_\ell$.
Hence $i'_1< \cdots < i'_k$ where $i'_\ell$ denotes the top row of $L_\ell$.

Now, applying $(Y'_2)^{k-1}$ to $(Y'_1)^{k-1}\bfP$ would only alter rows at or above row $i'_{k-1}$.
On the other hand, $Y'_k$ will only affect $(Y'_1)^{k-1}\bfP$ in rows between (and including) $i'_k$ and $i_k$.
Since $i'_{k-1} < i'_k\leq i_k$, we conclude that $(Y'_2)^{k-1}$ and $Y'_1$ can be applied to $(Y'_1)^{k-1}\bfP$ in either order to obtain the same super pipe dream.

We prove (ii) by induction on $k$ (the $k=1$ case being trivial).
The induction hypothesis states \[
    (Y'_n)^{k-1}\cdots (Y'_1)^{k-1}\bfP
    = (Y'_n\cdots Y'_1)^{k-1}\bfP.
\]
Applying $Y'_n\cdots Y'_1$ to both sides and observing that $Y'_pY'_q = Y'_qY'_p$ whenever $|p-q|\geq 2$, we get \[
    Y'_n
    \left(Y'_{n-1}(Y'_n)^{k-1}\right)\cdots
    \left(Y'_1(Y'_2)^{k-1}\right)
    (Y'_1)^{k-1}\bfP
    = (Y'_n\cdots Y'_1)^k\bfP.
\]
Now apply the previous part $n-1$ times to the above to get \[
    (Y'_n)^k\cdots (Y'_1)^k\bfP
    = (Y'_n\cdots Y'_1)^k\bfP.
    \qedhere
\]
\end{proof}

Given a super pipe dream $\bfP\in \SPD_0(w)$ for $w$ Grassmannian, write $\tab(\bfP)$ for the rev-tableau of the underlying pipe dream $\tab(P)$ with each label colored to match the cross that it came from.
The next proposition gives a description of how the operator $Y'_j$ is reflected in the rev-tableau $\tab(\bfP)$. 

\begin{proposition}
\label{prop:grass pipe dream jdt}
Let $w = \Grass(\lambda,m)$ be a Grassmannian permutation and suppose $\bfP\in \SPD_0(w)$ has a red cross in some column $j\geq 1$.
Write $(a,b)$ for the cell in $\tab(\bfP)$ corresponding to the highest red cross $(i,j)$ in column $j$ of $\bfP$ and $\bfQ := Y'_j\bfP$.

Then $\tab(\bfQ)$ is obtained from $\tab(\bfP)$ by preforming one of the following:

\begin{itemize}
\item[$\lozenge$]
Suppose the cell at $(a,b+1)$ has label less than $i$ (or does not exist).
Let $k\geq 0$ be maximum such that the cell at $(a+k,b)$ has label $i-k$.
Delete $(a,b)$ from $\tab(\bfP)$ creating a hole, slide the cells at $(a+1,b),\dots,(a+k,b)$ up by one, then fill in the hole at $(a+k,b)$ with a new red cell labeled $i-k-1$.

\item[$\blacklozenge$]
Suppose the cell at $(a,b+1)$ has label $i$.
Delete $(a,b)$ from $\tab(\bfP)$ creating a hole, slide the cell at $(a,b+1)$ to the left by one, then fill in the hole at $(a,b+1)$ with a new red cell labeled $i$.
\end{itemize}
\end{proposition}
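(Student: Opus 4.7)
The plan is to translate the pipe-dream operation $Y'_j$ into a corresponding operation on rev-tableaux, using the pipe-tableau correspondence from Proposition~\ref{prop:grassmannian}(i). Recall that the cell $(a, b)$ of $\tab(\bfP)$ corresponds to the $a$-th vertical cross from the bottom of pipe $p := m + b$ in $\bfP$; in particular, its label equals the row index of that cross. I would first establish as a structural lemma that in a reduced Grassmannian pipe dream, the vertical pipes through crosses in any fixed row $r$ appear in increasing pipe-index order from left to right; moreover, when $(r, j)$ and $(r, j+1)$ are both crosses, the corresponding vertical pipes have consecutive indices $p$ and $p+1$. The proof would rely on monotonicity (pipes travel up-and-right), reduceness (each pair of pipes crosses at most once), and the observation that any pipe with index strictly between $p$ and $p+1$ would have no room to pass vertically between columns $j$ and $j+1$ at row $r$.

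For case $\blacklozenge$, the operation $Y'_j$ swaps the colors at $(i, j)$ and $(i, j+1)$ while leaving the underlying pipe dream unchanged. By the structural lemma, the cross at $(i, j+1)$ corresponds to the cell $(a, b+1)$ with label $i$. Thus $\tab(\bfQ)$ is obtained from $\tab(\bfP)$ by swapping the colors at $(a, b)$ and $(a, b+1)$, exactly as stated.

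For case $\lozenge$, I would invoke Proposition~\ref{prop:grassmannian}(ii) to show that the intermediate-right cells $(i-r, j+1)$ of the big ladder $L$ (for $r = 1, \ldots, k$) must be empty in $\bfP$, since otherwise one could exhibit a big-ladder ladder-move configuration forbidden by that proposition. With the intermediate-right cells empty, the $Y'_j$ modification decomposes as $k+1$ reduced $2$-ladder moves collectively shifting the crosses at $(i, j), (i-1, j), \ldots, (i-k, j)$ up-and-right to $(i-1, j+1), (i-2, j+1), \ldots, (i-k-1, j+1)$. Tracing pipe trajectories near this region, one checks that pipe $p$ is rerouted in $\bfQ$ to pass vertically through exactly these new crosses, while pipe $p+1$ and all others are unchanged outside the region. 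Translating back, the labels at $(a, b), (a+1, b), \ldots, (a+k, b)$ each decrease by one to $i-1, i-2, \ldots, i-k-1$, and the red cell ends up at $(a+k, b)$ with label $i-k-1$, matching the proposition.

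The main obstacle is the local analysis of pipe trajectories in case $\lozenge$, particularly verifying that the rerouted crosses belong to pipe $p$ rather than to some neighboring pipe. Proposition~\ref{prop:grassmannian}(ii) plays a critical role here: by ruling out big-ladder move configurations, it ensures the rerouting is local and absorbs the changes into pipe $p$ alone, without disturbing pipes $p+1, p+2, \ldots$ whose tableau columns the proposition claims are unaffected.
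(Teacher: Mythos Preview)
Your proposal is correct and takes essentially the same approach as the paper: identify the pipes passing through the crosses in the ladder $L$, invoke Proposition~\ref{prop:grassmannian}(ii) to conclude that the right column of a big ladder is empty, and then trace pipe $p=m+b$ through the modification to read off the effect on $\tab$. The only notable difference is that you package the identification of cell $(a,b+1)$ via a separate structural lemma about consecutive vertical-pipe indices, whereas the paper obtains this more directly from the \emph{row} description of $\tab$: the horizontal pipe $m-a+1$ through $(i,j)$ continues horizontally through $(i,j+1)$ precisely when the latter is a cross, which immediately gives that $(a,b+1)$ has label $i$ in case~$\blacklozenge$ and label strictly less than $i$ in case~$\lozenge$.
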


\begin{proof}
Let $L$ denote the ladder formed when applying $Y'_j$ to $\bfP$.
By construction, the horizontal and vertical pipes passing through $(i,j)$ in $\bfP$ are pipes $m-a+1$ and $m+b$.
It follows that the cell at $(a,b+1)$ in $\tab(\bfP)$ has label $i$ if and only if $(i,j+1)$ is a cross in $\bfP$ (since this is the only situation where pipe $m-a+1$ passes through another cross east of $(i,j)$ in row $i$) if and only if $L$ is a big ladder.

Suppose the cell at $(a,b+1)$ in $\tab(\bfP)$ has label $i$ so that $L$ is a big ladder.
As in the proof of Proposition \ref{lem:grassmannian insertion}, the crosses of $\bfP$ in $L$ are $(i,j),(i-1,j),\dots,(i-k,j)$ with the only red cross being $(i,j)$ (here $i-k-1$ is the top row of $L$).
Pipe $b$ passes vertically through each of these crosses, so the corresponding cells in $\tab(\bfP)$ must be $(a,b),(a+1,b),\dots,(a+k-1,b)$ with labels $i,i-1,\dots,i-k$, respectively.
Since $(i-k-1,j)$ is not a cross in $\bfP$, the cell $(a+k+1,b)$ must either not exist or have label less than $i-k-1$.

In $\bfQ$, $L$ consists of the crosses $(i-1,j+1),(i-2,j+1),\dots,(i-k+1,j+1)$ with the only red cross being $(i-k+1,j+1)$.
For each $r\in [k+1]$, the same pipes passing through $(i-r+1,j)$ in $\bfP$ also pass through $(i-r,j+1)$ in $\bfQ$, so the boxes corresponding to the crosses in $L$ in $\tab(\bfQ)$ are the same as those in $\tab(\bfP)$.
The labels here are $i-1,i-2,\dots,i-k+1$ with $i-k+1$ red and the others black, hence $\tab(\bfQ)$ matches the description given in the proposition.

Now, suppose that $L$ is a $1$-ladder.
Since pipe $a$ passes horizontally through $L$, the cells in $\tab(\bfP)$ corresponding to the crosses in $L$ are $(a,b)$ and $(a,b+1)$.
The labels of these boxes are both $i$ with the one on the left being red.

$\bfQ$ is formed from $\bfP$ by exchanging the red cross at $(i,j)$ with the black cross at $(i,j+1)$.
Since the underlying pipe dream does not change, $L$ corresponds to the same boxes in $\tab(\bfQ)$.
The labels of these boxes are again both $i$, but their colors are swapped from how they are in $\tab(\bfP)$.
We conclude that $\tab(\bfQ)$ has the desired form.
\end{proof}

\begin{proposition}
\label{prop:insertion agrees}
Let $w = w(\lambda,m)$ be a Grassmannian permutation, $P\in \PD_0^+(w)$, and $I\sseq [m]$.
Then \[
    \tab(I\msert P) = I * \tab(P).
\]
\end{proposition}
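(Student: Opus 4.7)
The plan is to translate the pipe dream insertion $I \msert P$ into a jeu de taquin rectification of a skew reverse SSYT via the bijection $\tab$ from Proposition~\ref{prop:grassmannian}, and then invoke the classical characterization of the tableau product $I * \tab(P)$.

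First I set up the correspondence. Let $\bfW \in \SPD_0^+(w \ominus_m 1)$ be the super pipe dream corresponding to $(P, I, I)$, whose underlying pipe dream is $P \ominus_m 1$ with crosses in column $1$ colored black at rows in $I$ and red at rows in $[m] \sm I$. Since $w \ominus_m 1$ is Grassmannian (with descent at $m$), $\tab$ produces a rev-tableau, and the coloring of crosses induces a coloring of cells of $\tab(\bfW)$. I will interpret the red cells as forming a column realization of $I$ adjoined to (a shifted copy of) $\tab(P)$, so that $\tab(\bfW)$ becomes a colored skew rev-tableau whose rectification we want to identify with $I * \tab(P)$.

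Next, I apply Lemma~\ref{lem:grassmannian insertion}(ii) to reorder the composition realizing $(Y^+)^N\bfW$ so that each red cross is flowed from its starting column out to its final destination one cross at a time; this matches the standard processing order of inner corners in jeu de taquin. Proposition~\ref{prop:grass pipe dream jdt} then interprets each atomic $Y'_j$ step as a local jeu de taquin slide on $\tab(\bfW)$: the $\blacklozenge$ case is a horizontal slide (the east neighbor moves west into the red cell, with the red cell shifting one step east), while the $\lozenge$ case is a chain of vertical slides (the cells above the red cell shift up by one, with the red cell descending $k$ rows and its label updating from $i$ to $i-k-1$ to reflect the new pipe dream row it occupies). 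In either case, the red cell slides toward the larger of its east/south neighbors, which is the correct jeu de taquin move for a reverse SSYT.

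Combining these observations, the full rectification $(Y^+)^N\bfW$ realizes jeu de taquin rectification of the colored skew rev-tableau $\tab(\bfW)$; the resulting black tableau is $\tab(I \msert P)$, which by the classical characterization of the product equals $I * \tab(P)$. The main obstacle is the bookkeeping in the first step, namely verifying that the coloring of $\tab(\bfW)$ genuinely realizes $I$ adjoined to $\tab(P)$ as a skew tableau; this requires carefully tracking pipe labels through the column-$1$ crosses of $P \ominus_m 1$ to determine how the extra column of cells in $\tab(w \ominus_m 1)$ splits between labels coming from $P$ and labels coming from $I$. Once this skew-tableau realization is pinned down, the remainder is a routine verification that the induced sequence of slides yields the correct rectification.
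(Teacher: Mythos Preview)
Your overall strategy matches the paper's: reorder the flow via Lemma~\ref{lem:grassmannian insertion}(ii) so that red crosses are processed one at a time, then use Proposition~\ref{prop:grass pipe dream jdt} to read each $Y'_j$ as a jeu de taquin slide on the rev-tableau side. The difficulty is in the step you flag as ``bookkeeping,'' and it is not merely bookkeeping.

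In $\tab(\bfW)$, column~$1$ has cells with labels $m,m-1,\dots,1$ from top to bottom; the cell with label $i$ is black precisely when $i\in I$ and red when $i\in[m]\sm I$. Thus the black cells in column~$1$ sit at the scattered positions $\{(m-i+1,1):i\in I\}$, and together with the black cells in columns $\geq 2$ (which are $\tab(P)$ shifted right) they do \emph{not} form a skew shape in general. So the assertion that $\tab(\bfW)$ ``realizes $I$ adjoined to $\tab(P)$ as a skew tableau'' is false, and you cannot invoke the classical identification of the product with jdt rectification of that shape. (There is also a small slip: the red cells carry labels in $[m]\sm I$, not $I$, and in the $\lozenge$ case the cells that shift up are the ones \emph{below} the red cell, at $(a+1,b),\dots,(a+k,b)$.)

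The paper resolves this by first passing to $\bfW_0:=(Y^-)^m\bfW$. This has the effect, on the tableau side, of sinking the black cells in column~$1$ to the bottom, so that the black subtableau $T_0$ of $\tab(\bfW_0)$ is a genuine skew rev-tableau whose rectification is $I*\tab(P)$. Only after this preprocessing does the Lemma~\ref{lem:grassmannian insertion}/Proposition~\ref{prop:grass pipe dream jdt} argument go through cleanly, since each red cell now begins as an honest inner corner. Without the $(Y^-)^m$ step (or an equivalent device), the argument you sketch does not establish that the slides compute $I*\tab(P)$.
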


\begin{proof}
Form the super pipe dream $\bfW\in \SPD_0^+(w\ominus_m 1)$ corresponding to the triple $(P,I,I)$ as in the proof of Proposition \ref{prop:K-pieri rule}.
Notice that $w\ominus_m 1 = \Grass(\lambda',m)$ where $\lambda' = (\lambda_1+1,\dots,\lambda_m+1)$.

Set $\bfW_0 := (Y^-)^m\bfW$.
The only change when passing from $\bfW$ to $\bfW_0$ is that the crosses in column $1$ get spread across columns $1-m,\dots,1$, with all the red crosses moved to column $1-m$ (strictly below row $m$).
Since pipe $m+1$ passes through these crosses in $\bfW$, it must also pass though their counterparts in $\bfW_0$.

The result is that $\tab(\bfW_0)$ agrees with $\tab(\bfW)$ everywhere except for in column $1$, where in this column the black cells in $\bfW$ have sunk to the bottom in $\bfW_0$.
From this description, letting $T_0$ denote the skew tableau of black cells in $\tab(\bfW_0)$, we have $\Rect(T_0) = I*\tab(P)$ where the former denotes the usual rectification of tableau.

Take $M\geq 0$ large enough so that the red crosses of $(Y^+)^M\bfW$ all lie in rows at or above row $0$.
We can compute

\begin{align*}
    (Y^+)^M \bfW
    &= (Y^+)^{M+m} \bfW_0 \\
    &= Y^+_MY^+_{M-1}\cdots Y^+_{1-m} \bfW_0 \\
    &= (Y'_M)^k(Y'_{M-1})^k\dots (Y'_{1-m})^k \bfW_0 \tag{cf. Section \ref{subsect:reduced flowing}} \\
    &= (Y'_M Y'_{M-1} \cdots Y'_{1-m})^k \bfW_0
    \tag{Lemma \ref{lem:grassmannian insertion}}
\end{align*}
where $k = m - \abs{I}$ is the number of red crosses.

For $i\in [k]$, set $\bfW_i := (Y'_M Y'_{m-1} \cdots Y'_{1-m})^i \bfW_0$ and write $T_i$ for the subdiagram of $\tab(\bfW_i)$ consisting of the cells with black labels.
Since the red crosses in $\bfW_k = (Y^+)^M\bfW$ appear in rows strictly above the black crosses, $T_k = \tab(I\msert P)$ since $I\msert P$ is by definition the first coordinate of $\Rect(\bfW)$.

On the other hand, when passing from $\bfW_{i-1}$ to $\bfW_i$, the highest red cross in column $1-m$ is flowed all the way to column $M+1$.
The corresponding cell at $(k-i+1,1)$ in $\tab(\bfW_{i-1})$ is moved to a position in $\tab(\bfW_i)$ where the only cells further southeast are red (these are the only cells with labels $\leq 0$).
By the description given in Proposition \ref{prop:grass pipe dream jdt}, $T_i$ is obtained from $T_{i-1}$ by performing a series of jeu de taquin slides to the inner corner at $(k-i+1,1)$, moving it to an outer corner.
We conclude that $T_k$ is the rectification of $T_0$, therefore $\tab(I\msert P) = T_k = \Rect(T_0) = I * \tab(P)$.
\end{proof}

\subsection{A dual Robinson--Schensted--Knuth correspondence}

Fix $m,n\geq 1$.
By Proposition \ref{prop:bigrassmannian}(ii), the rectification bijection realizing the Cauchy identity for $\frakS_{b(m,n)}(x;y)$ (Theorem \ref{thm:cauchy identity}) has the form \[
    \Rect\colon \SPD_0^+(b(m,n))
    \overset\sim\longrightarrow
    \coprod_{\lambda\sseq [m]\times[n]} \PD_0^+(\Grass(\lambda,m))\times \PD_0^+(\Grass(\lambda^\dagger,n)).
\]

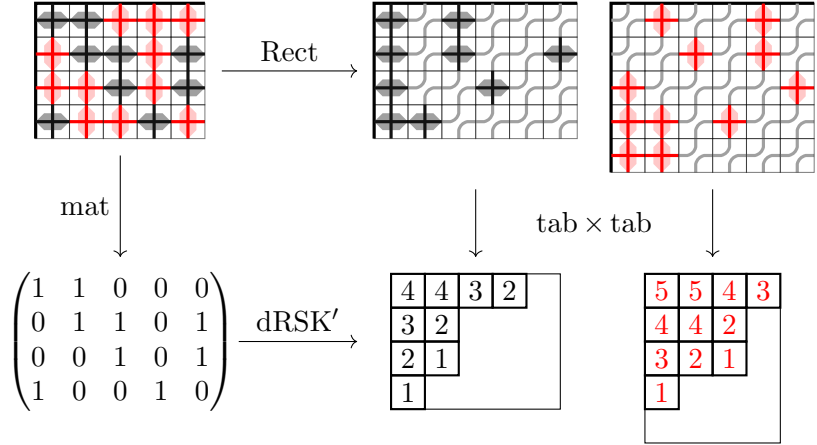
\begin{figure}
\begin{tikzpicture}[scale=0.45]
    % TOP-LEFT
    \begin{shift}
    \draw[very thick] (0.5,-4.5) -- (0.5,-0.5) -- (5.5,-0.5);
    \drawPD{4}{5}{
        1,1,2,2,2,
        2,1,1,2,1,
        2,2,1,2,1,
        1,2,2,1,2
    }
    \end{shift}

    \draw[->] (6,-2.5) -- (10,-2.5) node[midway,above] {$\Rect$};
    \draw[->] (3,-5) -- (3,-8) node[midway,left] {$\mat$};
    
    % TOP-RIGHT
    \begin{shift}[(10,0)]
    \draw[very thick] (0.5,-4.5) -- (0.5,-0.5) -- (6.5,-0.5);
    \drawPD{4}{6}{
        1,0,1,0,0,0,
        1,0,1,0,0,1,
        1,0,0,1,0,0,
        1,1,0,0,0,0
    }
    \draw[->] (3.5,-6) -- (3.5,-8);
    \node at (7,-7) {$\tab\times \tab$};
    \end{shift}
    
    \begin{shift}[(17,0)]
    \draw[very thick] (0.5,-5.5) -- (0.5,-0.5) -- (6.5,-0.5);
    \drawPD{5}{6}{
        0,2,0,0,2,0,
        0,0,2,0,2,0,
        2,0,0,0,0,2,
        2,2,0,2,0,0,
        2,2,0,0,0,0
    }
    \draw[->] (3.5,-6) -- (3.5,-8);
    \end{shift}
    
    % BOTTOM-LEFT
    \begin{shift}[(0,-8)]
    \node at (3,-2.5) {$\begin{pmatrix}
        1 & 1 & 0 & 0 & 0 \\
        0 & 1 & 1 & 0 & 1 \\
        0 & 0 & 1 & 0 & 1 \\
        1 & 0 & 0 & 1 & 0
    \end{pmatrix}$};
    
    \draw[->] (6.5,-2.5) -- (10,-2.5) node[midway,above] {$\RSK'$};
    \end{shift}
    
    % BOTTOM-RIGHT
    \begin{shift}[(10.5,-8)]
    \foreach \x\y\z in {
        1/1/4,1/2/4,1/3/3,1/4/2,
        2/1/3,2/2/2,
        3/1/2,3/2/1,
        4/1/1
    }{
        \node at (\y,-\x) {\z};
        \draw[thick] (\y-0.5,-\x+0.5) rectangle (\y+0.5,-\x-0.5);
    }
    \draw (0.5,-0.5) rectangle ++(5,-4);
    \end{shift}
    
    \begin{shift}[(18,-8)]
    \foreach \x\y\z in {
        1/1/5,1/2/5,1/3/4,1/4/3,
        2/1/4,2/2/4,2/3/2,
        3/1/3,3/2/2,3/3/1,
        4/1/1
    }{
        % \fill[color=y color 2] (\y-0.5,-\x+0.5) rectangle (\y+0.5,-\x-0.5);
        \node at (\y,-\x) {\color{y color 1}\z};
        \draw[thick] (\y-0.5,-\x+0.5) rectangle (\y+0.5,-\x-0.5);
    }
    \draw (0.5,-0.5) rectangle ++(4,-5);
    \end{shift}
\end{tikzpicture}
    
\caption{
    Computation of $\RSK'(A)$ for a binary matrix $A\in \BM_{4\times 5}$.
}

% xvec = [(1,1),(1,2),(2,2),(2,3),(2,5),(3,3),(3,5),(4,1),(4,4)]
% yvec = [(1,3),(1,4),(1,5),(2,1),(2,4),(3,1),(3,2),(3,4),(4,2),(4,3),(4,5)]
\label{fig:pipe dream dual RSK}
\end{figure}

Let $\BM_{m\times n}$ denote the collection of $m$-by-$n$ \textit{binary matrices}---matrices with entries in $\{0,1\}$.
There is a natural identification $\mat\colon \SPD_0^+(b(m,n))\to \BM_{m\times n}$ given by sending a super pipe dream $\bfW\in \SPD_0^+(b(m,n))$ (which by Proposition \ref{prop:bigrassmannian}(i) is the data of a decomposition $[m]\times [n] = W_x\amalg W_y$) to the binary matrix $A$ whose $1$'s and $0$'s are determined by $W_x$ and $W_y$, respectively.
We thus get a commutative square of bijections \[
\begin{tikzcd}
    \SPD_0^+(b(m,n))
    \arrow[r,"\Rect"] \arrow[d, swap, "{\mat}"]
    &
    \displaystyle\coprod_{\lambda\sseq [m]\times[n]} \PD_0^+(\Grass(\lambda,m))\times \PD_0^+(\Grass(\lambda^\dagger,n))
    \arrow[d, "{\tab\times\tab}"]
    \\
    \BM_{m\times n}
    \arrow[r,"\RSK'"]
    &
    \displaystyle\coprod_{\lambda\sseq [m]\times[n]}
    \RSSYT(\lambda,m) \times \RSSYT(\lambda^\dagger,n)
    \end{tikzcd}.
\]
where the bottom map can be thought of as an incarnation of the dual Robinson--Schensted--Knuth correspondence.

We would like to understand $\RSK'$ in terms of tableau insertion.
Let $\bfW\in \SPD_0^+(b(m,n))$ be with corresponding binary matrix $A= \mat(\bfW)\in \BM_{m\times n}$.
Write $(V,U) = \Rect(\bfW)$ and $C_j = \{i\in [m] \mid A_{ij} = 1\}$.
It is not hard to verify that \[
    V = C_1\msert (C_2\msert(\cdots (C_n\msert \varnothing)\cdots))
\] by utilizing that $Y^+_p$ and $Y^+_q$ commute whenever $\abs{p-q}\geq 2$.
By Proposition \ref{prop:insertion agrees}, we conclude that $\tab(V)$ is the usual \textit{insertion tableau} $\ins(A) = C_1 * \cdots * C_n$.

To interpret $\tab(U)$, we use the symmetry theorem for rectification (Proposition \ref{prop:symmetry theorem}) to write $(U,V) = \Rect(\bfW^\dagger)$.
Then by the previous case, $\tab(U) = \ins(A^\dagger)$ where $A^\dagger$ denotes the matrix obtained from $A^t$ by swapping the $0$'s and $1$'s.
Equivalently, $\tab(U) = R_1 * \cdots * R_m$ where $R_i = \{j\in[n] \mid A_{ij} = 0\}$.
We summarize this in the following proposition:
\begin{proposition}
\label{prop:pipe dream dual RSK}
Let $A\in \BM_{m\times n}$.
Then \[
    \RSK'(A) = (\ins(A),\ins(A^\dagger)).
\]
\end{proposition}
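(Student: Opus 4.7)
The plan is to formalize the argument sketched in the paragraphs immediately preceding the proposition, relying on two previously established results: Proposition \ref{prop:insertion agrees}, which identifies pipe dream $m$-insertion with the tableau product on rev-tableaux, and Proposition \ref{prop:symmetry theorem}, which gives access to the second component $U$ through the adjoint $\bfW^\dagger$.

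Given $A \in \BM_{m\times n}$, set $\bfW := \mat^{-1}(A)$ and $(V,U) := \Rect(\bfW)$. For $j \in [n]$, let $C_j := \{i \in [m] \mid A_{ij} = 1\}$, so that column $j$ of $\bfW$ consists of black checkers in the rows indexed by $C_j$ and red checkers in the rows indexed by $[m]\smallsetminus C_j$. The main step is to establish the identity
\[
    V = C_1 \msert \bigl(C_2 \msert \bigl(\cdots \msert (C_n \msert \varnothing)\cdots\bigr)\bigr).
\]
I would argue this by induction on $n$. The base case $n = 0$ is vacuous. For the inductive step, the observation to exploit is that the flow operators $Y^+_p$ and $Y^+_q$ act on disjoint pairs of columns whenever $|p - q| \geq 2$, and hence commute on their common domain. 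Using this commutation, the composition of flows comprising $\Rect(\bfW)$ can be rescheduled to process the columns of $\bfW$ one at a time, beginning with the rightmost column $C_n$ and moving leftward. The rightmost column alone in an otherwise-empty pipe dream corresponds, after the identification with a super pipe dream for $(w \ominus_m 1)$, exactly to $C_n \msert \varnothing$; prepending column $C_{n-1}$ and rectifying yields $C_{n-1} \msert (C_n \msert \varnothing)$; and so on. Once this identity is established, iterated application of Proposition \ref{prop:insertion agrees} gives
\[
    \tab(V) = C_1 * C_2 * \cdots * C_n = \ins(A).
\]

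For the second coordinate, apply Proposition \ref{prop:symmetry theorem}(i) to obtain $(U, V) = \Rect(\bfW^\dagger)$, and note that $\mat(\bfW^\dagger) = A^\dagger$. Applying the first half of the argument to $\bfW^\dagger$ in place of $\bfW$ yields $\tab(U) = \ins(A^\dagger)$, which together with the first identity completes the proof.

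The main obstacle will be rigorously executing the column-by-column rescheduling of flows. In particular, I need to verify that the intermediate super pipe dreams arising from successive $m$-insertions agree, up to the shift automorphism $\sigma$ built into the definition of $\Rect$, with the super pipe dreams produced by flowing red checkers in $\bfW$ one column at a time from right to left. Once this alignment of shifts is checked and the commutation of non-adjacent $Y^+_j$'s is used to factor $(Y^+)^M$ appropriately, everything else reduces to bookkeeping, and the proposition follows from the commutative square relating $\Rect$ and $\RSK'$ via $\mat$ and $\tab \times \tab$.
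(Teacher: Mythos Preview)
Your proposal is correct and follows essentially the same approach as the paper: the paper's argument (given in the paragraphs immediately preceding the proposition) likewise establishes $V = C_1\msert(\cdots(C_n\msert\varnothing)\cdots)$ via the commutation of $Y^+_p$ and $Y^+_q$ for $|p-q|\geq 2$, applies Proposition~\ref{prop:insertion agrees} to get $\tab(V) = \ins(A)$, and then invokes Proposition~\ref{prop:symmetry theorem} on $\bfW^\dagger$ to obtain $\tab(U) = \ins(A^\dagger)$. Your inductive framing of the column-by-column rescheduling is slightly more explicit than the paper's ``it is not hard to verify,'' but the substance is identical.
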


To close, we remark that the usual dual RSK correspondence is the bijection \[
    \RSK\colon
    \BM_{m\times n}
    \overset\sim\longrightarrow
    \coprod_{\lambda\sseq [m]\times[n]}
    \RSSYT(\lambda,m) \times \RSSYT(\lambda^t,n).
\] where we send a matrix $A$ to the pair $(\ins(A),\rec(A))$.
Here $\rec(A)$ is the \textit{(dual) recording tableau} of $A$ whose label in cell $(i,j)$ is the largest $r\in [n]$ such that cell $(j,i)$ appears in $C_r * \cdots * C_n$.

There are natural identifications $\RSSYT(\lambda^t,n) \overset\sim\to \RSSYT(\lambda^\dagger,n)$ where a tableau $T$ is sent to the tableau $\ol{T}$ whose labels in column $j\in [m]$ are the values in $[n]$ not appearing as labels in column $m+1-j$ of $T$.\footnote{
    Again, we suppress $m$ and $n$ in the notation for $\ol{T}$. 
}
Computation suggests that the following symmetry of dual RSK holds:
\begin{conjecture}
\label{conj:pipe dream dual RSK is dual RSK}
    Let $A\in \BM_{m\times n}$.
    Then \[
        \ol{\rec(A)} = \ins(A^\dagger).
    \]
\end{conjecture}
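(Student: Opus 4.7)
The plan is to establish the conjecture by exploiting Fomin's growth-diagram interpretation of the classical dual Robinson--Schensted--Knuth correspondence, together with a complementation symmetry for the growth diagram under the involution $A \mapsto A^\dagger$.

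First, I would set up Fomin's growth diagram for dual RSK: to each $A \in \BM_{m\times n}$ one associates an $(m+1)\times(n+1)$ array of partitions $\{\mu^A(i,j)\}$ with boundary conditions $\mu^A(0,j) = \mu^A(i,0) = \varnothing$, determined from $A_{ij}$ by Fomin's explicit local rules. Standard considerations then give that the chain $\varnothing \sseq \mu^A(m,1) \sseq \cdots \sseq \mu^A(m,n) = \lambda$ along the bottom row encodes the recording tableau $\rec(A) \in \RSSYT(\lambda^t, n)$ (column-by-column via the successive horizontal strips), while the analogous chain along the right column encodes $\ins(A)$.

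The crucial step is then to verify the symmetry $\mu^{A^\dagger}(j,i) = (\mu^A(i,j))^\dagger$ for all $(i,j)$, where on the right the dagger is the operation of rotated complementation followed by transposition within the sub-rectangle $[i]\times[j]$. This is a local check: at each vertex of the growth diagram, one compares Fomin's local rule for $A_{ij}$ with that for $A^\dagger_{ji} = 1 - A_{ij}$ and verifies that these rules intertwine under the appropriate partition complementation. Granting this symmetry, the chain along the bottom row of $A^\dagger$'s growth diagram (encoding $\ins(A^\dagger)$) is precisely the $\dagger$-image of the chain along the right column of $A$'s growth diagram (encoding $\rec(A)$). Unpacking this at the tableau level---noting that the column-complementation-and-reversal defining $\overline{(\cdot)}$ is the tableau-theoretic shadow of the $\dagger$-operation on shape chains---gives $\ins(A^\dagger) = \overline{\rec(A)}$.

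The main obstacle is the local verification of the growth-diagram symmetry. Although the intuition that complementing $A$ should correspond to complementing the partitions is natural, the dual-RSK local rules treat $A_{ij}=0$ and $A_{ij}=1$ asymmetrically, and the ambient rectangle in which one takes the rotated complement varies with the vertex $(i,j)$. Pinning down the precise sense of complementation at each vertex and dispatching all the local cases---while ensuring that the end result matches the tableau operation $\overline{(\cdot)}$---is the combinatorial heart of the argument. An alternative route, should the direct local check prove stubborn, is to induct on $m$ by removing the last row of $A$, in which case one reduces to the identity $\overline{\rec(A)} = \overline{\rec(A')} * R_m$ (with $A'$ the truncated matrix and $R_m$ the complement of its last row), and then to track the effect of the extra copies of $m$ on the shape chain $\{\mu_r\}$ defining $\rec(A)$.
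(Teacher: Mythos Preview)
The paper does not prove this statement: it is stated as Conjecture~\ref{conj:pipe dream dual RSK is dual RSK}, supported only by computational evidence, and left open. There is therefore no paper proof to compare your proposal against.

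As for the viability of your approach: the growth-diagram framework for dual RSK is well established, and a complementation symmetry of the sort you describe is a natural thing to look for. However, the step you yourself flag as the main obstacle is genuinely the whole problem. The proposed identity $\mu^{A^\dagger}(j,i) = (\mu^A(i,j))^\dagger$ with the complement taken in the \emph{varying} rectangle $[i]\times[j]$ is not a single local rule but a family of relations tied together by how the ambient box changes as you move along an edge of the diagram. Concretely, passing from vertex $(i,j)$ to $(i+1,j)$ changes the complementation rectangle from $[i]\times[j]$ to $[i+1]\times[j]$, so verifying compatibility of Fomin's local rule with ``$\dagger$'' requires tracking not just the complement of a single partition but the interaction of two complements taken in different boxes. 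This is exactly where a naive case check can silently fail, and your proposal does not indicate how you would organize it. The alternative inductive route you sketch faces the same difficulty in a different guise: the identity $\overline{\rec(A)} = \overline{\rec(A')} * R_m$ is not obviously true and would itself need justification beyond what you have outlined.

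In short, your plan is a reasonable line of attack on an open problem, but as written it is a proof \emph{strategy} rather than a proof; the decisive local computation is neither carried out nor reduced to something known.
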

\noindent
Given the conjecture, our variant of dual RSK would genuinely become dual RSK through the commutative triangle \[
    \begin{tikzpicture}
    \node at (0,0) {
        $\BM_{m\times n}$
    };
    \draw[->] (0.5,0.5) -- (2.5,1)
    node[midway,shift={(0,0.4)}] {
        \scalebox{0.8}{$\RSK$}
    };
    \draw[->] (0.5,-0.5) -- (2.5,-1) 
    node[midway,shift={(0,-0.4)}] {
        \scalebox{0.8}{$\RSK'$}
    };
    
    \node at (6,1) {
        $\displaystyle\coprod_{\lambda\sseq [m]\times[n]}
        \RSSYT(\lambda,m) \times \RSSYT(\lambda^t,n)$
    };
    \node at (6,-1){
        $\displaystyle\coprod_{\lambda\sseq [m]\times[n]}
        \RSSYT(\lambda,m) \times \RSSYT(\lambda^\dagger,n)$
    };
    \draw[->] (6,0.5) -- (6,-0.5)
    node[midway,shift={(1.2,0)}] {
        \scalebox{0.8}{$(T,S)\mapsto (T,\ol{S})$}
    };
    \end{tikzpicture}.
\]

\section*{Acknowledgment}
The author would like to thank David Anderson and Zachary Hamaker, who have each given valuable mentorship and suggestions on where to take this project.
Additional thanks is given to William Newman, Oliver Pechenik, and Anna Weigandt for helpful conversations.

\nocite{*}
\bibliographystyle{plain}
\bibliography{main}

\end{document}